\newcommand{\pcite}[1]{\citeauthor{#1}'s \citeyearpar{#1}}
\newcommand{\Ex}{\mathrm{E}}
\newcommand{\Var}{\mathrm{Var}}
\newcommand{\tr}{\mathrm{tr}}
\newcommand{\rank}{\mathrm{rank}}
\newcommand{\ind}{\stackrel{\mathrm{ind}}{\sim}}
\newcommand{\iid}{\stackrel{\mathrm{iid}}{\sim}}
\def\baro{\vskip  .2truecm\hfill \hrule height.5pt \vskip  .2truecm}
\def\barba{\vskip -.1truecm\hfill \hrule height.5pt \vskip .4truecm}
\newtheorem{proposition}{Proposition}
\newtheorem{lemma}{Lemma}
\newtheorem{remark}{Remark}
\theoremstyle{remark}
\newcommand{\X}{{\mathsf{X}}}
\newcommand{\tsl}{T_{\lambda,\tau}}
\newcommand{\tsli}{T_{\lambda,\tau}^{-1}}
\newcommand{\msl}{M_{\lambda, \tau}}
\newcommand{\qsl}{Q_{\lambda,\tau}}
\newcommand{\qsli}{Q_{\lambda,\tau}^{-1}}
\begin{document}
\title{Fast Monte Carlo Markov chains \\ for Bayesian shrinkage models
  with random effects} \author{Tavis Abrahamsen \\ Department of
  Statistical Science \\ Duke University\\ tavis.abrahamsen@duke.edu \and James P. Hobert \\
  Department of Statistics \\ University of Florida\\ jhobert@stat.ufl.edu}

\keywords{Bayesian shrinkage prior; geometric drift condition; geometric ergodicity; high dimensional inference; large $p$ - small $n$; Markov chain Monte Carlo}

\maketitle

\begin{abstract}
When performing Bayesian data analysis using a general linear mixed
model, the resulting posterior density is almost always analytically
intractable.  However, if proper conditionally conjugate priors are used, there is a
simple two-block Gibbs sampler that is geometrically ergodic in nearly
all practical settings, including situations where $p > n$
\citep{abra:hobe:2017}.  Unfortunately, the (conditionally conjugate)
multivariate normal prior on $\beta$ does not perform well in the
high-dimensional setting where $p \gg n$.  In this paper, we consider an
alternative model in which the multivariate normal prior is replaced by
the normal-gamma shrinkage prior developed by
\cite{griffin2010inference}.  This change leads to a much more complex
posterior density, and we develop a simple MCMC algorithm for exploring
it.  This algorithm, which has both deterministic and random scan
components, is easier to analyze than the more obvious three-step Gibbs
sampler.  Indeed, we prove that the new algorithm is geometrically
ergodic in most practical settings.
\end{abstract}

\section{Introduction}

The general linear mixed model (or variance components model) is one
of the most frequently applied statistical models.  It takes the form
\begin{equation*}
  Y = X \beta + \sum_{i=1}^m Z_i u_i + e \;,
\end{equation*}
where $Y$ is an observable $n \times 1$ data vector, $X$ and
$\{Z_i\}_{i=1}^m$ are known matrices, $\beta$ is an unknown $p \times
1$ vector of regression coefficients, $\{u_i\}_{i=1}^m$ are
independent random vectors whose elements represent the various levels
of the random factors in the model, and $e \sim \mbox{N}_n(0,
\lambda^{-1}_0 I)$.  The random vectors $e$ and $u := \big( u_1^T \;
u_2^T \cdots u_m^T \big)^T$ are independent, and $u \sim \mbox{N}_q(0,
\Lambda^{-1})$, where $u_i$ is $q_i \times 1$, $q = q_1 + \cdots +
q_m$, and $\Lambda = \oplus_{i=1}^m \lambda_{i} I_{q_i}$.  (We assume
throughout that $n \ge 2$, and that $q_i \ge 2$ for each
$i=1,2,\dots,m$.)  For a book-length treatment of this model and its
many applications, see \citet{mccu:sear:neuh:2008}.

In the Bayesian setting, prior distributions are assigned to $\beta$
and $\lambda := (\lambda_0 \;\, \lambda_{1} \, \cdots \,
\lambda_{m})^T$.  Unfortunately, any non-trivial prior leads to an
intractable posterior density.  However, if $\beta$ and $\lambda$ are
assigned conditionally conjugate priors, then a simple two-block Gibbs
sampler can be used to explore the resulting posterior density.  In
particular, if we assign a multivariate normal prior to $\beta$, and
independent gamma priors to the precision parameters, then, letting
$\theta = (\beta^{T} \ u^{T})^{T}$, it is easily shown that given observed data $y$,
$\theta|\lambda,y$ is multivariate normal, and $\lambda|\theta,y$ is a
product of independent gammas.  (Since $u$ is unobservable, it is
treated like a parameter.)  Convergence rate results for this block
Gibbs sampler can be found in \citet{abra:hobe:2017}.

Now consider this Bayesian mixed model in the high-dimensional setting
where $p \gg n$.  This situation can arise, e.g., in genetics and
neuroscience where variability between subjects is most appropriately
handled with random effects \citep[see,
e.g.,][]{fazli2011,rohart2013fixed}.  While the model described above
could certainly be used in this setting, the multivariate normal prior
on $\beta$ is really not suitable.  Indeed, when $p \gg n$, it is
often assumed that $\beta$ is sparse, i.e., that many components of
$\beta$ are zero.  Unfortunately, the multivariate normal prior for
$\beta$ will \textit{shrink} the estimated coefficients towards zero,
but not enough to produce an (approximately) sparse estimate of $\beta$.
Additionally, when the components of $\beta$ have
varying magnitudes, the estimates of the ``large'' components will be
shrunk disproportionately compared to the estimates of the ``small''
components.  Below we propose an alternative prior for $\beta$ that is
tailored to the high-dimensional setting.

The well-known Bayesian interpretation of the lasso (involving iid
Laplace priors for the regression parameters) has led to a flurry of
recent research concerning the development of prior distributions for
regression parameters (in linear models \textit{without} random
effects) that yield posterior distributions with high posterior
probability around sparse values of $\beta$.  These prior
distributions are called {\it continuous shrinkage priors} and the
corresponding statistical models are referred to as {\it Bayesian
  shrinkage models} (see, e.g.,
\cite{bhattacharya2013bayesian,bhattacharya2015dirichlet},
\cite{griffin2010inference}, \cite{polson2010shrink}, and
\cite{park2008bayesian}).  One such Bayesian shrinkage model is the
so-called normal-gamma model of \cite{griffin2010inference}, which is
given by
\begin{equation}
\label{eq:normalgamma}
\begin{aligned}
  Y| \beta,\tau,\lambda_0 & \sim \mbox{N}_{n}(X\beta,\lambda_0^{-1}I_{n}) \\
  \beta | \tau,\lambda_0 & \sim \mbox{N}_{p}(0,\lambda_0^{-1}
  D_{\tau}) \;,
\end{aligned}
\end{equation}
where $\tau := (\tau_{1} \, \cdots \, \tau_{p})^{T}$ and $D_{\tau}$ is
a diagonal matrix with the $\tau_i$s on the diagonal.  The precision
parameter, $\lambda_0$, and the components of $\tau$ are assumed to be
\textit{a priori} independent with $\lambda_0 \sim \text{Gamma}(a,b)$ and
$\tau_{i} \iid \text{Gamma}(c,d)$ for $i=1,\ldots,p$.  When $c=1$,
this model becomes the Bayesian lasso model introduced by
\cite{park2008bayesian}.  We note that
\cite{bhattacharya2013bayesian,bhattacharya2015dirichlet} show that,
in terms of frequentist optimality, the Bayesian lasso has sub-optimal
prior concentration rates in that it does not place sufficient mass
around sparse values of $\beta$.  Alternatively, shrinkage priors that
have singularities at zero and robust tails (such as in the
normal-gamma model with $c < 1/2$), have been shown to perform well in
empirical studies.

In this paper, we propose and analyze an MCMC algorithm for a new
Bayesian general linear mixed model in which the standard multivariate
normal prior on $\beta$ is replaced with the continuous shrinkage
prior from the normal-gamma model.  Our high-dimensional Bayesian
general linear mixed model is defined as follows
\begin{equation}
\begin{aligned}
\label{eq:NormalGammaModel}
Y|\beta,u,\tau,\lambda & \sim \mbox{N}_{n}\left(X\beta + \sum_{i=1}^{m}Z_{i}u_{i}, \lambda_{0}^{-1}I_{n}\right) \\
\beta|u,\tau,\lambda & \sim \mbox{N}_{p}(0,\lambda_{0}^{-1}D_{\tau}) \\
u|\tau,\lambda & \sim \mbox{N}_{q}(0, \Lambda^{-1}) \;,\\
\end{aligned}
\end{equation}
where $\lambda$ and $\tau$ are a priori independent with $\lambda_{i}
\ind \text{Gamma}(a_{i},b_{i})$, for $i=0,1,\ldots,m$, and $\tau_{i}
\iid \text{Gamma}(c,d)$ for $i=1,\ldots,p$.  This model can be
considered a Bayesian analog of the frequentist, high dimensional
mixed model developed by \cite{schelldorfer2011estimation}.  (Of
course, it can also be viewed as a mixed version of the normal-gamma
shrinkage model.) A similar sparse Bayesian linear mixed model has been
proposed by \cite{zhou2013polygenic} for polygenic modeling. They
assume a ``spike and slab" prior consisting of a mixture of a point mass at 0
and a normal distribution for the components of $\beta$.
However, it is well-known that spike and slab priors lead to MCMC algorithms that
have convergence problems, especially when $p$ is large
(\cite{polson2010shrink,bhattacharya2015dirichlet}).

Recall that $\theta = (\beta^{T} \ u^{T})^{T}$, and let $\pi(\theta,\lambda,\tau|y)$ denote the posterior density
associated with model \eqref{eq:NormalGammaModel}.  This density is
highly intractable and Bayesian inference requires MCMC, which should,
of course, be based on a geometrically ergodic Monte Carlo Markov
chain \citep[see,
e.g.][]{robe:rose:1998,jone:hobe:2001,fleg:hara:jone:2008}.  As we
show in Section~\ref{sec:hybrid_samp}, the full conditional densities $\pi_1(\theta|\lambda,\tau,y)$,
$\pi_2(\lambda|\theta,\tau,y)$, and $\pi_3(\tau|\theta,\lambda,y)$ all
have standard forms, which means that there is a simple three-block
Gibbs sampler available.  Unfortunately, we have been unable to
establish a convergence rate for this Gibbs sampler (in either
deterministic or random scan form).  However, we have been able to
prove that a related \textit{hybrid} algorithm does converge at a
geometric rate.  The invariant density of our Markov chain is
$\pi(\theta,\lambda|y) := \int_{\mathbb{R}_+^p}
\pi(\theta,\lambda,\tau|y) \, d\tau$.  Let $r \in (0,1)$ be fixed, and
denote the Markov chain by $\{(\theta_k,\lambda_k)\}_{k=0}^\infty$.
If the current state is $(\theta_k,\lambda_k) = (\theta,\lambda)$,
then we simulate the new state, $(\theta_{k+1},\lambda_{k+1})$, using
the following three-step procedure.

\baro \vspace*{2mm}
\noindent {\rm Iteration $k+1$ of the hybrid algorithm:}
\begin{enumerate}
\item[1.] Draw $\tau \sim \pi_3(\cdot|\theta,\lambda,y)$, and,
  independently, $U \sim \mbox{Uniform}(0,1)$.
\item[2a.] If $U < r$, set $(\theta_{k+1},\lambda_{k+1}) =
  (\theta',\lambda)$ where $\theta' \sim \pi_1(\cdot|\lambda,\tau,y).$
\item[2b.] Otherwise, set $(\theta_{k+1},\lambda_{k+1}) =
  (\theta,\lambda')$ where $\lambda' \sim \pi_2(\cdot|\theta,\tau,y).$
\end{enumerate}
\barba
\medskip

At each iteration, this sampler first performs a deterministic update
of $\tau$ from its full conditional distribution.  Next, a random scan
update is performed, which updates either $\theta$ or $\lambda$ with
probability $r$ and $(1-r)$, respectively.  This sampler is no more
difficult to implement than the three-block Gibbs sampler.  Moreover,
it is straightforward to show that the Markov chain driving this
algorithm is reversible with respect to $\pi(\theta,\lambda|y)$,
and that it is Harris ergodic.  This algorithm is actually a special
case of a more general MCMC algorithm for Bayesian latent data models
that was recently developed by \cite{jung:2015}.

Our main result provides a set of conditions under which the hybrid
Markov chain defined above is geometrically ergodic (as defined by
\citet[][p.319]{jone:hobe:2001}).  Here is the result.
\begin{proposition}
  \label{prop:ge_p_pred}
  The Markov chain, $\{(\theta_{k},\lambda_{k})\}_{k=0}^{\infty}$, is
  geometrically ergodic for all $r \in (0,1)$ if
\begin{enumerate}
\item $Z := (Z_1 \; Z_2 \cdots Z_m)$ has full column rank,
\item $a_{0} > \frac{1}{2} \left( \rank(X)-n+(2c+1)p + 2 \right)$, and
\item $a_i > 1$ for each $i \in \{1,2,\dots,m\}$.
\end{enumerate}
\end{proposition}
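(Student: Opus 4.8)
The plan is to establish geometric ergodicity the usual way: exhibit a geometric drift condition for the hybrid Markov operator $K$ together with the associated minorization on the sub-level sets of the drift function, and then invoke the standard drift-and-minorization criterion for geometric ergodicity (see, e.g., \citet{jone:hobe:2001}); alternatively, one may quote directly the sufficient condition that \cite{jung:2015} established for the class of hybrid samplers to which our chain belongs. The minorization is not where the difficulty lies. The chain is already known to be Harris ergodic, and because $\pi_1$, $\pi_2$, and $\pi_3$ have the explicit Gaussian, product-of-gammas, and product-of-generalized-inverse-Gaussians forms recorded in Section~\ref{sec:hybrid_samp} --- all with strictly positive, jointly continuous densities --- the two-step kernel obtained from a $\lambda$-update followed by a $\theta$-update has a component that is absolutely continuous (with positive continuous density) in both $\theta$ and $\lambda$, so every compact set, hence every set $\{V\le d\}$, is small. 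Thus the whole task is to produce a function $V$, unbounded off compact sets, and constants $\rho\in(0,1)$ and $L<\infty$ with $(KV)(\theta,\lambda)\le\rho\,V(\theta,\lambda)+L$.

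For $V$ I would take a carefully weighted combination of polynomial-in-$(\beta,u)$ terms and moments (positive and negative) of the precisions --- schematically,
\[
V(\theta,\lambda)\;=\;\sum_{j}\alpha_j\,\|\beta\|^{2j}\;+\;\|u\|^{2}\;+\;\sum_{i=0}^{m}\bigl(c_i\,\lambda_i+c_i'\,\lambda_i^{-1}\bigr)\;+\;\sum_{i=1}^{m}\,c_i''\,\lambda_i^{-2},
\]
with the nonnegative weights and the polynomial degrees in $\beta$ pinned down only at the end. Since one iteration is the deterministic draw $\tau\sim\pi_3(\cdot\mid\theta,\lambda,y)$ followed by a random scan, $K=rK_1+(1-r)K_2$, where $K_1$ draws $\theta'\sim\pi_1(\cdot\mid\lambda,\tau,y)$ and leaves $\lambda$ fixed, and $K_2$ draws $\lambda'\sim\pi_2(\cdot\mid\theta,\tau,y)$ and leaves $\theta$ fixed. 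It is enough that $K_1$ strictly contract the $(\beta,u)$-part of $V$ while inflating the ($K_1$-invariant) $\lambda$-part by at most a constant, and symmetrically for $K_2$; the convex combination with any weight $r\in(0,1)$ then delivers $\rho<1$, which is exactly why the conclusion is uniform in $r$. The coupling between the two halves of $V$ is what forces negative powers of $\lambda_0$ and of the $\lambda_i$ into $V$ (so that a $\theta$-update's dependence on them is already charged to the right-hand side) and the $\|\beta\|^{2j}$ terms into $V$ (so that a $\lambda$-update's dependence on $\theta$ is charged there).

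The $K_2$ half is the easy one. Given $(\theta,\tau,y)$, with $W:=(X\;Z)$, one has $\lambda_0'\sim\mathrm{Gamma}\bigl(a_0+\tfrac{n+p}{2},\,b_0+\tfrac12\|y-W\theta\|^2+\tfrac12\beta^TD_\tau^{-1}\beta\bigr)$ and, independently, $\lambda_i'\sim\mathrm{Gamma}\bigl(a_i+\tfrac{q_i}{2},\,b_i+\tfrac12\|u_i\|^2\bigr)$ for $i\ge1$. So $\Ex[\lambda_i'\mid\cdot]$ is bounded by a constant for every $i$, $\Ex[\lambda_i'^{-1}\mid\cdot]$ is affine in $\|u_i\|^2$, and $\Ex[\lambda_0'^{-1}\mid\cdot]$ is affine in $\|y-W\theta\|^2$ and in $\beta^TD_\tau^{-1}\beta$; the hypothesis $a_i>1$, which makes the shape $a_i+q_i/2$ exceed $2$ even when $q_i=2$, is precisely what keeps finite the second moments of $\lambda_i'^{-1}$ that surface when one controls the products $\lambda_0\lambda_i^{-1}$ produced by the $\theta$-update (see below). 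It then remains to average over $\tau$: since $\tau_i\mid\beta_i,\lambda_0$ is generalized inverse Gaussian with parameters $(c-\tfrac12,\,2d,\,\lambda_0\beta_i^2)$, the Bessel-function moment formula gives $\beta_i^2\,\Ex[\tau_i^{-1}\mid\beta_i,\lambda_0]\le\mathrm{const}\cdot(\lambda_0^{-1}+\beta_i^2)$, hence $\Ex_\tau[\beta^TD_\tau^{-1}\beta]\le\mathrm{const}\cdot(p\,\lambda_0^{-1}+\|\beta\|^2)$. Taking the weight of $\lambda_0^{-1}$ in $V$ small relative to that of $\|\beta\|^2$ then yields a genuine contraction of the $\lambda$-part of $V$ along $K_2$, up to an additive constant.

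The $K_1$ half is where essentially all of the work lies, and is the step I expect to be the main obstacle. Given $(\lambda,\tau,y)$, $\theta'$ is Gaussian with precision $\qsl:=\lambda_0W^TW+\mathrm{diag}(\lambda_0D_\tau^{-1},\Lambda)$ and mean $\mu_{\lambda,\tau}:=\lambda_0\qsli W^Ty$, so $\Ex[\|\theta'\|^2\mid\lambda,\tau]=\|\mu_{\lambda,\tau}\|^2+\tr\qsli$, with the higher $\|\beta\|$-moments being higher polynomials in the entries of $\mu_{\lambda,\tau}$ and of $\qsli$. When $p>n$ the design $W$ has a nontrivial null space, which --- because $Z$ has full column rank (condition 1) --- sits entirely in the $\beta$-coordinates; consequently $\qsl$ can be arbitrarily ill-conditioned when some $\tau_i$ are large, and any bound on $\tr\qsli$ or $\|\mu_{\lambda,\tau}\|$ that ignores the law of $\tau$ simply diverges. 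The remedy is that $\tau$ is drawn from $\pi_3$ given the \emph{current} $\beta$: from the positive-definite ordering $\qsl\succeq\mathrm{diag}(\lambda_0D_\tau^{-1},\Lambda)$ one gets $\tr\qsli\le\lambda_0^{-1}\sum_{i=1}^{p}\tau_i+\sum_{i=1}^{m}q_i/\lambda_i$, while comparing $\mu_{\lambda,\tau}$ with $\theta=0$ in the penalized least-squares problem it minimizes gives $\|\mu_{\lambda,\tau}\|^2\le\|y\|^2\bigl(\sum_i\tau_i+\lambda_0\sum_i\lambda_i^{-1}\bigr)$ (here the $\lambda_0\lambda_i^{-1}$ cross-terms are what will need the $\lambda_i^{-2}$ part of $V$ and condition 3). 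The generalized inverse Gaussian moment formula then yields $\Ex[\tau_i\mid\beta_i,\lambda_0]\le\mathrm{const}\cdot(1+\beta_i^2)\bigl(\lambda_0^{1/2}+\lambda_0^{1/2-c}\bigr)$, with analogous bounds on the higher $\tau_i$-moments used for the higher $\|\beta\|$-terms of $V$, and condition 1 is used once more to keep the $u$-coordinates of $\mu_{\lambda,\tau}$ under control. Feeding all of this into $\Ex_\tau[\cdot]$, the $K_1$-increment of $V$ collapses to a sum of monomials in $\|\beta\|$ and in the $\lambda_i^{\pm1}$, each multiplied by some power of $\lambda_0$ inherited from the $\qsli$-bound and from the generalized inverse Gaussian moments; demanding that every one of these be dominated by $\rho\,V$ plus a constant is what fixes the degrees of the $\|\beta\|$-polynomials in $V$ and, in turn, forces the lower bound on $a_0$ in condition 2. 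Rewritten, that bound asks the shape $a_0+\tfrac{n+p}{2}$ of the $\lambda_0$-update to exceed roughly $(c+1)p$, which is exactly what is needed for the (linearly-in-$p$ growing) negative powers of $\lambda_0$ produced by this bookkeeping to have finite expectation; the $-n+\rank(X)$ in condition 2 is the corresponding adjustment for the effective dimension of the likelihood relative to the $\beta$-prior. Once both halves are in place, one fixes all the weights and exponents and checks $rK_1V+(1-r)K_2V\le\rho V+L$ with $\rho<1$; the $\lambda_0$-power bookkeeping in the $K_1$ half is the crux.
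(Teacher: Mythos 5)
Your overall architecture --- a geometric drift condition for the hybrid kernel $K=rK_1+(1-r)K_2$, bounds on the conditional means/variances of the Gaussian $\theta$-update, GIG moment formulas for the $\tau$-averaging, and a final balancing of coefficients with $r$ treated as a free parameter --- matches the paper's. But there is a genuine gap at the very foundation: for the shrinkage regime of interest ($c\le 1/2$), the conditional density $\pi_3(\tau_j\mid\theta,\lambda,y)\propto \tau_j^{(c-1/2)-1}\exp\{-\frac{1}{2}(\lambda_0\beta_j^2/\tau_j+2d\tau_j)\}$ is \emph{not integrable} when $\beta_j=0$, so the kernel is only defined on $\X=\{(\theta,\lambda):|\beta_j|>0\ \forall j\}$. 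Your drift function contains no term that diverges as $\beta_j\to 0$, and this breaks both available routes to geometric ergodicity. On the Feller route (the one the paper takes), $V$ must be unbounded off compact sets of $\X$, and your sublevel sets $\{V\le d\}$ are not compact in $\X$ because they contain sequences converging to the deleted hyperplanes. On the drift-and-minorization route you sketch, your claim that ``every set $\{V\le d\}$ is small'' fails for the same reason: as $\beta_j\to 0$ with $c<1/2$, the $\mathrm{GIG}(c-1/2,2d,\lambda_0\beta_j^2)$ law of $\tau_j$ concentrates at $0$, the prior precision $\lambda_0/\tau_j$ in the $\theta$-update blows up, and the transition density of the new $\beta_j$ collapses onto $0$; no uniform minorization holds over a set containing such points. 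This is exactly why the paper (and already \citet{pal2014geometric}, who \emph{do} use drift-and-minorization) must include the term $\sum_{j=1}^p|\beta_j|^{-\nu(c)}$ in the drift function, and why the key technical lemma is the self-consistency bound $\Ex[\tau_j^{-\nu(c)/2}\mid\theta,\lambda]\le M_1\lambda_0^{-\nu(c)/2}|\beta_j|^{-\nu(c)}+M_2$ with $M_1\kappa(c)<1$, which makes that singular term contract under one sweep. Your proposal contains no mechanism for controlling the chain near $\{\beta_j=0\}$, and without one the proof does not go through.

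Two secondary points. First, the hypotheses enter differently from how you predict: condition (3), $a_i>1$, is not about finiteness of second moments of $\lambda_i^{-1}$ (only first inverse moments are ever used, and those need only $q_i+2a_i>2$); it is needed so that the coefficient $q_i/(q_i+2a_i-2)$ multiplying $\|u_i\|^2\mapsto\lambda_i^{-1}$ in the drift recursion is strictly less than $1$. Likewise condition (2) is not a moment-existence requirement on negative powers of $\lambda_0$; it is exactly what makes the aggregate coefficient $\bigl(\rank(X)+2p(c+1)\bigr)/(n+p+2a_0-2)$ of the $\lambda_0^{-1}$ term strictly less than $1$. Second, your higher-order terms $\|\beta\|^{2j}$ and $\lambda_i^{-2}$ are unnecessary: the paper's bounds (e.g.\ $\|\Ex[\beta\mid\tau,\lambda]\|^2\lesssim 1+\sum_j\tau_j$ and $\Ex[\tau_j\mid\theta,\lambda]\le \tfrac{c}{d}+\tfrac{\beta_j^2}{2C}+\tfrac{\lambda_0 C}{4d}$) close the loop with only $\|y-W\theta\|^2$, $\|\beta\|^2$, $\|u_i\|^2$, $\lambda_i^{\pm1}$, $\lambda_0^{\nu(c)/2}$, and the singular term $\sum_j|\beta_j|^{-\nu(c)}$.
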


Note that the conditions of Proposition~\ref{prop:ge_p_pred} are quite
simple to check.  We do require $Z$ to be full column rank, which
holds for most basic designs, but there is no such restriction on $X$,
so the result is applicable when $p>n$.  While the second condition
may become restrictive when $p \gg n$, this can be mitigated to some
extent by the fact that the user if free to choose any positive value
for the hyperparameter $b_{0}$.  Indeed, when a large value of $a_0$
is required to satisfy condition (2), $b_{0}$ can be chosen such that
the prior mean and variance of $\lambda_{0}$ (which are given by
$a_{0}b_{0}^{-1}$ and $a_{0}b_{0}^{-2}$, respectively) have reasonable
values.

\citet{abra:hobe:2017} established simple, easily-checked sufficient
conditions for geometric ergodicity of the two-block Gibbs sampler for
the standard Bayesian general linear mixed model that assigns a multivariate normal prior to
$\beta$.  Note, however, that unlike the multivariate normal prior,
the continuous shrinkage prior that we use here is hierarchical, which
means that an additional set of latent variables (the $\tau_i$s) must
be managed, and this leads to more complicated MCMC algorithms that
are more difficult to analyze.  On a related note,
\citet{pal2014geometric} obtained geometric ergodicity results for the
Gibbs sampler developed for the original normal-gamma model (without random
effects).  But again, adding random effects to the normal-gamma model leads to new MCMC algorithms
that are more complex and harder to handle.

The remainder of this paper is organized as follows.
Section~\ref{sec:hybrid_samp} contains a formal definition of the
Markov chain that drives our hybrid sampler.  A proof of
Proposition~\ref{prop:ge_p_pred} is given in
Section~\ref{sec:geom_erg}.  Finally, a good deal of technical
material, such as proofs of the lemmas that are used in the main
proof, has been relegated to the Appendices.

\section{The Hybrid Sampler}
\label{sec:hybrid_samp}

In this section, we formally define the Markov transition function
(Mtf) of the hybrid algorithm.  We begin with a brief derivation of
the conditional densities, $\pi_i$, $i=1,2,3$.  Let $W = [X \; Z]$,
$\mathbb{R}_+ = (0,\infty)$, and recall that $\theta= (\beta^{T} \
u^{T})^{T}$.  The posterior density can be expressed up to a constant
of proportionality as
\begin{equation}
\begin{aligned}
\label{eq:fullposterior}
\pi(\theta, \tau,\lambda| y) & \propto \, \pi(y|\beta,u,\tau,\lambda)\pi(\beta|\tau,\lambda)\pi(u|\tau,\lambda)\pi(\tau) \pi(\lambda) \\
& \propto \,\lambda_{0}^{n/2}\exp\left\{-\frac{\lambda_{0}}{2}(y-W\theta)^{T}(y-W\theta)\right\} \\
& \quad \times \lambda_{0}^{p/2}\left[\prod_{j=1}^{p}\tau_{j}^{-1/2}\right]\exp\left\{-\frac{\lambda_{0}}{2}\beta^{T}D_{\tau}^{-1}\beta \right\}\\
& \quad\times\left[\prod_{i=1}^{m}\lambda_{i}^{q_{i}/2}\right]\exp\left\{-\frac{1}{2}u^{T}\Lambda u\right\} \\
& \quad \times
\left[\prod_{j=1}^{p}\tau_{j}^{c-1}e^{-d\tau_{j}}\,I_{\mathbb{R}_+}(\tau_{j})\right]\left[\prod_{i=0}^{m}\lambda_{i}^{a_{i}-1}e^{-b_{i}\lambda_{i}}\,I_{\mathbb{R}_+}(\lambda_{i})\right]
.
\end{aligned}
\end{equation}
We will use \eqref{eq:fullposterior} to derive the full conditional
distributions of $\theta$, $\tau$ and $\lambda$.  First, it is shown
in the Appendix that the full conditional distribution of $\theta$ is
multivariate normal with
\begin{equation}
\label{eq:theta_cond_mean}
\Ex[\theta|\tau,\lambda,y] = \left[\begin{array}{c}
    \lambda_{0} \tsli X^{T}y - \lambda_{0}^{2}\tsli X^{T}Z\qsli Z^{T}\msl y\\
    \lambda_{0}\qsli Z^{T}\msl y \;
\end{array}\right] ,
\end{equation}
and
\begin{equation}
\label{eq:theta_cond_var}
\Var[\theta|\tau,\lambda,y] = \left[\begin{array}{cc}
    \tsl^{-1} + \lambda_{0}^{2}\tsl^{-1}X^{T}Z\qsl^{-1}Z^{T}X\tsl^{-1} & -\lambda_{0}\tsl^{-1}X^{T}Z\qsl^{-1}\\
    -\lambda_{0}\qsl^{-1}Z^{T}X\tsl^{-1} & \qsl^{-1}
  \end{array}\right] ,
\end{equation}
where $\tsl = \lambda_{0}(X^{T}X + D_{\tau}^{-1})$, $\msl = I
-\lambda_{0}XT^{-1}_{\lambda,\tau}X^{T}$, and $\qsl=
\lambda_{0}Z^{T}M_{\lambda,\tau}Z + \Lambda$ .

Next, it's clear from \eqref{eq:fullposterior} that the components of
$\lambda$ are conditionally independent, and that each has a gamma
distribution.  Indeed,
\begin{equation}
  \label{eq:lambda0_post}
  \pi_2(\lambda_{0}|\theta,\tau,y) \propto \, \lambda_{0}^{n/2 + p/2 + a_{0}-1} e^{-\lambda_{0}\left(\frac{||y-W\theta||^{2}}{2} + \frac{\beta^{T}D_{\tau}^{-1}\beta}{2} + b_{0}\right)}\,I_{\mathbb{R}_+}(\lambda_{0}) \;,
\end{equation}
and, for $i=1,2,\dots,m$,
\begin{equation}
\label{eq:lambda_i_post}
\pi_2(\lambda_{i}|\theta,\tau,y) \propto \, \lambda_{i}^{q_{i}/2 + a_{i} - 1}e^{-\lambda_{i}\left(\frac{||u_{i}||^{2}}{2}+b_{i}\right)}\,I_{\mathbb{R}_+}(\lambda_{i}) \;.
\end{equation}

Lastly,
\begin{align*}{}
  \pi_3(\tau|\theta,\lambda,y) & \propto \,\left[\prod_{j=1}^{p}\tau_{j}^{-1/2}\right]\exp\left\{-\frac{\lambda_{0}}{2}\sum_{j=1}^{p}\frac{\beta_{j}^{2}}{\tau_{j}}\right\}\left[\prod_{j=1}^{p}\tau_{j}^{c-1}e^{-d\tau_{j}}\,I_{\mathbb{R}_+}(\tau_{j})\right] \\
  &=
  \prod_{j=1}^{p}\tau_{j}^{(c-1/2)-1}\exp\left\{-\frac{1}{2}\left(\frac{\lambda_{0}\beta_{j}^{2}}{\tau_{j}}
      + 2d\tau_{j}\right)\right\}\,I_{\mathbb{R}_+}(\tau_{j}) \;.
\end{align*}
Thus, the $\tau_j$s are conditionally independent, and
\begin{equation}
\label{eq:tau_post}
\pi_3(\tau_{j}|\theta,\lambda,y) \propto \, \tau_{j}^{(c -
  1/2)-1}e^{-\frac{1}{2}\left(\lambda_{0}\frac{\beta_{j}^{2}}{\tau{j}}
    + 2d\tau_{j}\right)}\, I_{\mathbb{R}_+}(\tau_{j}) \;.
\end{equation}
This brings us to a subtle technical problem that turns out to be very
important in our convergence rate analysis.  Note that, if $c \le 1/2$
and $\beta_j=0$, then the right-hand side of \eqref{eq:tau_post} is
not integrable.  Moreover, it is precisely these values of $c$ that yield
effective shrinkage priors.  Of course, from a
simulation standpoint, this technical problem is a non-issue because
we will never observe an exact zero from the normal distribution.
However, in order to perform a theoretical analysis of the Markov
chain, we are obliged to define the Mtf for \textit{all} points in the
state space.  Our solution is to simply delete the offending points
from the state space.  (Alternatively, we could make a special
definition of the Mtf at the offending points, but this leads to a
Markov chain that lacks the \textit{Feller} property
\citep[][p.124]{meyn:twee:2009}, and this prevents us from employing
\pcite{meyn:twee:2009} Lemma 15.2.8.)  Thus, we define the state space
of our Markov chain to be
\[
\X = \Big \{ (\theta,\lambda) \in \mathbb{R}^{p+q} \times
\mathbb{R}_+^{m+1} : |\beta_j|>0 \;\; \mbox{for $j=1,\dots,p$} \Big \}
\;.
\]
Taking the state space to be $\X$ instead of $\mathbb{R}^{p+q} \times
\mathbb{R}_+^{m+1}$ has no effect on posterior inference because the
difference between these two sets is a set of measure zero.  However,
as will become clear in Section~\ref{sec:geom_erg}, the deleted points
do create some complications in the drift analaysis.  We note that
this particular technical issue has surfaced before \citep[see,
e.g.,][]{roman2012convergence}, although, in contrast with the current
situation, the culprit is typically improper priors.

It's clear from \eqref{eq:tau_post} that conditional on $\theta$,
$\lambda$ and $y$, the distribution of $\tau_{j}$ is
$\mbox{GIG}(c-1/2,2d,\lambda_{0}\beta_{j}^{2})$, where
$\mbox{GIG}(\zeta,\xi,\psi)$ denotes the Generalized Inverse Gaussian
distribution with parameters $\zeta \in \mathbb{R}$, $\xi > 0$, and
$\psi > 0$.  The density is given by
\begin{equation}
  \label{eq:GIG_dens}
  f_{\mbox{GIG}}(x;\zeta,\xi,\psi) =
  \frac{\xi^{\zeta/2}}{2\psi^{\zeta/2} K_{\zeta}(\sqrt{\xi\psi})}
  x^{\zeta-1} e^{-\frac{1}{2}(\xi x + \frac{\psi}{x})}
  I_{\mathbb{R}_{+}}(x) \;,
\end{equation}
where $K_{\zeta}(\cdot)$ denotes the modified Bessel function of the
second kind.

Fix $r \in (0,1)$, and let $A$ denote a measurable set in $\X$.  The
Mtf of the Markov chain that drives our hybrid algorithm is given by
\begin{align*}
  P((\theta,\lambda),A) & = r \int_{\mathbb{R}^{p+q}}
  I_{A}(\tilde{\theta},\lambda) \left[ \int_{\mathbb{R}^{p}_{+}}
    \pi(\tilde{\theta} | \tau,\lambda) \, \pi(\tau|\theta,\lambda) \,
    d\tau \right] \, d\tilde{\theta} \\ & \quad \quad +(1-r)
  \int_{\mathbb{R}^{m}_{+}} I_{A}(\theta,\tilde{\lambda}) \left[
    \int_{\mathbb{R}^{p}_{+}} \pi(\tilde{\lambda} | \theta,\tau) \,
    \pi(\tau|\theta,\lambda) \, d\tau \right] \, d\tilde{\lambda} \;,
\end{align*}
where we (henceforth) omit the dependence on $y$ from the conditional
densities for notational convenience.  The Appendix contains a proof
that the Markov chain defined by $P$ is a Feller chain.  In the next
section, we prove Proposition~\ref{prop:ge_p_pred}.

\section{Proof of Proposition~\ref{prop:ge_p_pred}}
\label{sec:geom_erg}

This section contains a proof of Proposition~\ref{prop:ge_p_pred}.  In
particular, we establish a geometric drift condition for our Markov
chain, $\{(\theta_k,\lambda_k)\}_{k=0}^\infty$, using a drift
function, $v: \X \rightarrow [0,\infty)$, that is unbounded off
compact sets.  Since our chain is Feller, geometric ergodicity then
follows immediately from \pcite{meyn:twee:2009} Theorem 6.0.1 and
Lemma 15.2.8.

\subsection{The drift function}
\label{sec:drift_fuc}

Recall that in our model, $\tau_{i} \iid \text{Gamma}(c,d)$.  The
hyperparameter $c$ will play a crucial role in our drift function.
Define $\nu: \mathbb{R}_{+} \rightarrow (0,1/2]$ as
\[
\nu(c) = c \, I_{(0,\frac{1}{2}]}(c) + \min\{1/2,2c-1\} \,
I_{(\frac{1}{2}, \infty)}(c) \;.
\]
Now let $\delta = (\delta_{1} \cdots \delta_{m})^{T}$, $\eta =
(\eta_{1} \cdots \eta_{m})^{T}$, and define the drift function as
follows
\begin{equation}
  \label{eq:drift_fun}
\begin{aligned}
  v(\theta,\lambda) & = \alpha_{1} \norm{y-W\theta}^{2} + \alpha_{2}\norm{\beta}^{2}\\
  & \quad+ \sum_{j=1}^{p} \frac{1}{\abs{\beta_{j}}^{\nu(c)}}
  + \sum_{i=1}^{m} \delta_{i} \norm{u_{i}}^{2} \\
  & \quad + \alpha_{3} \lambda_{0} + \alpha_{4}
  \lambda_{0}^{\nu(c)/2} + \lambda_{0}^{-1} + \sum_{i=1}^{m}
  \lambda_{i} + \sum_{i=1}^{m}\eta_{i}\lambda_{i}^{-1} \;,
\end{aligned}
\end{equation}
where $\alpha_{1},\alpha_{2},\alpha_{3},\alpha_{4} \in \mathbb{R}_+$
and $\delta, \eta \in \mathbb{R}^{m}_{+}$.  The values of these
constants are to be determined.  The Appendix contains a proof that
$v(\theta,\lambda)$ is unbounded off compact sets.

\begin{remark}
  Using $\X$ instead of $\mathbb{R}^{p+q} \times \mathbb{R}_+^{m+1}$
  as the state space has implications for the construction of the
  drift function.  In particular, since the hyper-planes where
  $\beta_{j} = 0$ are not part of $\X$, and we need the drift function
  to be unbounded off compact sets, we must have a term in the drift
  function that diverges as $\abs{\beta_{j}} \rightarrow 0$.  In fact,
  this is the \textit{only} reason why the term $\sum_{j=1}^{p}
  \frac{1}{\abs{\beta_{j}}^{\nu(c)}}$ is part of $v$.  Interestingly,
  \citet{pal2014geometric} established their convergence rate results
  using a drift/minorization argument, which does not require the
  drift function to be unbounded off compact sets, yet they still require
  this same term in their drift function.  Fortunately, we are able to
  reuse some of the bounds that they developed.
\end{remark}

Our goal is to demonstrate that
\begin{equation}
  \label{eq:driftcond}
  \Ex[v(\tilde{\theta},\tilde{\lambda})|\theta,\lambda] \leq \rho \,
  v(\theta,\lambda) + L \;,
\end{equation}
for some $\rho \in [0,1)$ and some finite constant $L$.  First, note
that
\begin{align*}
  \Ex[v(\tilde{\theta},\tilde{\lambda})|\theta,\lambda] &= r\int_{\mathbb{R}^{p+q}} v(\tilde{\theta},\lambda)\left[\int_{\mathbb{R}^{p}_{+}}\pi(\tilde{\theta}|\tau,\lambda)\pi(\tau|\theta,\lambda)d\tau\right]\, d\tilde{\theta} \\
  & \hspace*{10mm} + (1-r)\int_{\mathbb{R}_{+}^{m}}v(\theta,\tilde{\lambda})\left[\int_{\mathbb{R}^{p}_{+}}\pi(\tilde{\lambda}|\theta,\tau)\pi(\tau|\theta,\lambda)\, d\tau\right]\,d\tilde{\lambda}\\
  &=
  r\Ex\left[\Ex[v(\tilde{\theta},\lambda)|\tau,\lambda]\bigg|\theta,\lambda\right]
  + (1-r)\Ex\left[\Ex[v(\theta,\tilde{\lambda})|\tau,\theta]\bigg|
    \theta,\lambda\right] \;.
\end{align*}
It follows that
\begin{equation}
\label{eq:main_drift_eq}
\begin{aligned}
  \Ex[v(\tilde{\theta},\tilde{\lambda})|\theta,\lambda] & = r\Ex\left[\Ex\left[\alpha_{1}\norm{y-W\tilde{\theta}}^{2} + \alpha_{2}\norm{\tilde{\beta}}^{2} \right.\right. \\
  &\hspace*{25mm} \left.\left.+ \sum_{j=1}^{p}\frac{1}{\abs{\tilde{\beta}_{j}}^{\nu(c)}}+ \sum_{i=1}^{m}\delta_{i}\norm{\tilde{u}_{i}}^{2}\Big|\tau,\lambda\right]\bigg|\theta,\lambda\right] \\
  &\quad + r\left(\alpha_{3}\lambda_{0} +\alpha_{4}\lambda_{0}^{\nu(c)/2}+ \lambda_{0}^{-1} + \sum_{i=1}^{m}\lambda_{i} + \sum_{i=1}^{m}\eta_{i}\lambda_{i}^{-1}\right)\\
  & \quad + (1-r)\bigg(\alpha_{1}\norm{y-W\theta}^{2} + \alpha_{2}\norm{\beta}^{2}  + \sum_{j=1}^{p}\frac{1}{\abs{\beta_{j}}^{\nu(c)}}+ \sum_{i=1}^{m}\delta_{i}\norm{u_{i}}^{2}\bigg)\\
  &\quad +(1-r)\Ex\bigg[\Ex\bigg[\alpha_{3}\tilde{\lambda}_{0}
      +\alpha_{4}\tilde{\lambda}_{0}^{\nu(c)/2} +
      \tilde{\lambda}_{0}^{-1} + \sum_{i=1}^{m}\tilde{\lambda}_{i} +
      \sum_{i=1}^{m}\eta_{i}\tilde{\lambda}_{i}^{-1}\Big|\tau,\theta\bigg]\bigg|\theta,\lambda\bigg]
  \;.
\end{aligned}
\end{equation}

\begin{remark}
  The hybrid sampler employs three full conditional densities, yet the
  expression above contains only \textit{two} nested expectations.
  This is due to the random scan step in the hybrid sampler.  In
  contrast, a drift analysis of the deterministic scan Gibbs sampler
  for this problem involves similar equations having \textit{three}
  nested expectations, which greatly complicates the calculations.  On
  the other hand, a drift analysis of the random scan Gibbs sampler
  involves no nested expectations, which suggests that such an
  analysis would be relatively easy.  However, it turns out to be more
  difficult than the analysis of the hybrid algorithm.
\end{remark}

Now define
\[
h(\tilde{\theta}) = \alpha_{1}\norm{y-W\tilde{\theta}}^{2} +
\alpha_{2}\norm{\tilde{\beta}}^{2} +
\sum_{j=1}^{p}\frac{1}{\abs{\tilde{\beta}_{j}}^{\nu(c)}}+
\sum_{i=1}^{m}\delta_{i}\norm{\tilde{u}_{i}}^{2} \;,
\]
and
\[
g(\tilde{\lambda}) = \alpha_{3}\tilde{\lambda}_{0} +
\alpha_{4}\tilde{\lambda}_{0}^{\nu(c)/2} + \tilde{\lambda}_{0}^{-1} +
\sum_{i=1}^{m}\tilde{\lambda}_{i} +
\sum_{i=1}^{m}\eta_{i}\tilde{\lambda}_{i}^{-1} \;.
\]
In order to bound \eqref{eq:main_drift_eq}, we need to develop bounds
for $\Ex \Big[ \Ex \big[ h(\tilde{\theta}) \big| \tau, \lambda \big]
\Big| \theta,\lambda \Big]$ and $\Ex \Big[ \Ex \big[
g(\tilde{\lambda}) \big| \tau, \theta \big] \Big| \theta, \lambda
\Big]$.  These will be handled separately in the next two subsections.

\subsection{A bound on $\Ex \Big[ \Ex \big[ h(\tilde{\theta}) \big|
  \tau, \lambda \big] \Big| \theta,\lambda \Big]$}
\label{sec:first_term}

Let
$$R_i = [0_{q_i \times q_1} \ldots 0_{q_i \times q_{i-1}} \
I_{q_i\times q_i } \ 0_{q_i\times q_{i+1}}\ldots 0_{q_ i \times q_r}],$$
so that $u_i = R_i u$ for $i=1,\ldots,m$.  It's easy to see that
\begin{equation}
\label{eq:theta_exp}
\begin{aligned}
  \Ex\left[\norm{y-W\tilde{\theta}}^{2}\Big|\tau,\lambda\right] & = \tr(W\Var[\tilde{\theta}|\tau,\lambda]W^{T}) + \norm{y-W\Ex[\tilde{\theta}|\tau,\lambda]}^{2} \;, \\
  \Ex\left[\norm{\tilde{\beta}}^{2}\Big|\tau,\lambda\right] & = \tr(\Var[\tilde{\beta}|\tau,\lambda]) + \norm{\Ex[\tilde{\beta}|\tau,\lambda]}^{2} \;, \hspace*{1mm} \mbox{and} \\
  \Ex\left[\norm{\tilde{u}_{i}}^{2}\Big|\tau,\lambda\right] & = \tr(R_{i}\qsli R_{i}^{T}) + \norm{\Ex[\tilde{u}_{i}|\tau,\lambda]}^{2} \;. \\
\end{aligned}
\end{equation}
Hence,
\begin{equation}
\label{eq:term1_eq1}
\begin{aligned}
  \Ex\left[\Ex\left[h(\tilde{\theta})\Big|\tau,\lambda\right]\bigg|\theta,\lambda\right]
  & = \alpha_{1}\Ex\bigg[\tr(W\Var[\tilde{\theta}|\tau,\lambda]W^{T}) + \norm{y-W\Ex[\tilde{\theta}|\tau,\lambda]}^{2}\big|\theta,\lambda\bigg] \\
  & \quad + \alpha_{2}\Ex\left[\tr(\Var[\tilde{\beta}|\tau,\lambda]) + \norm{\Ex[\tilde{\beta}|\tau,\lambda]}^{2}\big|\theta,\lambda\right] \\
  & \quad + \Ex\left[\Ex\left[\sum_{j=1}^{p}\frac{1}{\abs{\tilde{\beta}_{j}}^{\nu(c)}}\bigg|\tau,\lambda\right]\bigg|\theta,\lambda\right]\\
  & \quad + \sum_{i=1}^{m}\delta_{i}\Ex\big[\tr(R_{i}\qsli
    R_{i}^{T})+\norm{\Ex[\tilde{u}_{i}|\tau,\lambda]}^{2}\big|\theta,\lambda\big]\;.
\end{aligned}
\end{equation}
We will bound \eqref{eq:term1_eq1} using the following lemmas, which
are proven in the Appendix.
\begin{lemma}
\label{lemma:theta_tr_ineq}
For all $\tau \in \mathbb{R}^{p}_{+}$ and $\lambda \in
\mathbb{R}^{m+1}_{+}$ ,
\begin{itemize}
\item[(1)] $\tr(W\Var[\theta|\tau,\lambda]W^{T}) \leq \tr(X\tsli
  X^{T}) + \tr(Z\qsli Z^{T})$ , \hspace*{1mm} \mbox{and}

\item[(2)] $\tr(X\tsli X^{T}) \leq \rank(X)\lambda_{0}^{-1}$ ,

\item[(3)] $\tr(Z\qsli Z^{T}) \leq
  \tr(ZZ^{T})\sum_{i=1}^{m}\lambda_{i}^{-1}$ .
\end{itemize}
\end{lemma}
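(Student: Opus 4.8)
The plan is to reduce all three bounds to a handful of elementary facts about the Loewner order. The crucial first step is to simplify $W\Var[\theta|\tau,\lambda]W^{T}$. Writing $W=[X\ Z]$ and multiplying out the block form \eqref{eq:theta_cond_var}, the off-diagonal blocks combine with the second summand of the $(1,1)$-block so that, on setting $C:=\lambda_{0}X\tsli X^{T}Z$,
\[
W\Var[\theta|\tau,\lambda]W^{T}=X\tsli X^{T}+(C-Z)\,\qsli\,(C-Z)^{T}.
\]
Since $C-Z=-(I-\lambda_{0}X\tsli X^{T})Z=-\msl Z$, this is simply $W\Var[\theta|\tau,\lambda]W^{T}=X\tsli X^{T}+\msl Z\qsli Z^{T}\msl$, so that, by cyclicity of the trace, $\tr\bigl(W\Var[\theta|\tau,\lambda]W^{T}\bigr)=\tr(X\tsli X^{T})+\tr(\msl^{2}Z\qsli Z^{T})$.

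The second step is to record, via the Woodbury identity, that $\lambda_{0}X\tsli X^{T}=X(X^{T}X+D_{\tau}^{-1})^{-1}X^{T}=I-(I+XD_{\tau}X^{T})^{-1}$, so that $\msl=(I+XD_{\tau}X^{T})^{-1}$ and hence $0\prec\msl\preceq I$ (because $XD_{\tau}X^{T}\succeq0$). Part (1) now follows: $\msl\preceq I$ gives $I-\msl^{2}\succeq0$, and since $Z\qsli Z^{T}\succeq0$ and the trace of a product of two positive semidefinite matrices is nonnegative, $\tr\bigl((I-\msl^{2})Z\qsli Z^{T}\bigr)\ge0$, i.e.\ $\tr(\msl^{2}Z\qsli Z^{T})\le\tr(Z\qsli Z^{T})$. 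Part (2) also falls out of the Woodbury form: $\lambda_{0}X\tsli X^{T}=I-(I+XD_{\tau}X^{T})^{-1}$ is positive semidefinite, has rank exactly $\rank(X)$ (since $D_{\tau}^{-1}\succ0$), and satisfies $\lambda_{0}X\tsli X^{T}\preceq I$; hence its trace is a sum of $\rank(X)$ eigenvalues, each less than $1$, and is therefore at most $\rank(X)$, which gives $\tr(X\tsli X^{T})\le\rank(X)\lambda_{0}^{-1}$ after dividing by $\lambda_{0}$.

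For part (3), since $\lambda_{0}Z^{T}\msl Z\succeq0$ we have $\qsl\succeq\Lambda$, hence $\qsli\preceq\Lambda^{-1}$ and $\tr(Z\qsli Z^{T})\le\tr(Z\Lambda^{-1}Z^{T})$. Writing $Z=[Z_{1}\ \cdots\ Z_{m}]$ and using $\Lambda^{-1}=\oplus_{i=1}^{m}\lambda_{i}^{-1}I_{q_{i}}$ we get $Z\Lambda^{-1}Z^{T}=\sum_{i=1}^{m}\lambda_{i}^{-1}Z_{i}Z_{i}^{T}$, so $\tr(Z\Lambda^{-1}Z^{T})=\sum_{i=1}^{m}\lambda_{i}^{-1}\tr(Z_{i}Z_{i}^{T})\le\bigl(\sum_{i=1}^{m}\lambda_{i}^{-1}\bigr)\bigl(\sum_{i=1}^{m}\tr(Z_{i}Z_{i}^{T})\bigr)=\tr(ZZ^{T})\sum_{i=1}^{m}\lambda_{i}^{-1}$, where the inequality is the elementary bound $\sum a_{i}b_{i}\le(\sum a_{i})(\sum b_{i})$ for nonnegative reals. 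I expect the main obstacle to be the bookkeeping in the block-matrix reduction of the first step together with spotting the identity $\msl=(I+XD_{\tau}X^{T})^{-1}$; once these are in hand, parts (1)--(3) are short exercises in the monotonicity of the trace and of matrix inversion.
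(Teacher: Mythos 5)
Your proof is correct and follows essentially the same route as the paper's: the same block-matrix reduction to $W\Var[\theta|\tau,\lambda]W^{T}=X\tsli X^{T}+\msl Z\qsli Z^{T}\msl$, the same trace-monotonicity argument via $0\preceq I-\msl^{2}$ for part (1), and the same $\qsli\preceq\Lambda^{-1}$ bound for part (3). The only (harmless) difference is that you obtain $0\prec\msl\preceq I$ in one line from the Woodbury identity $\msl=(I+XD_{\tau}X^{T})^{-1}$, whereas the paper derives it via a singular value decomposition of $XD_{\tau}^{1/2}$; both parts (2) and (3) are otherwise the same up to trivial rearrangement.
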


\begin{lemma}
\label{lemma:ynorm_ineq}
For all $\tau \in \mathbb{R}^{p}_{+}$ and $\lambda \in
\mathbb{R}^{m+1}_{+}$ ,
\[
\norm{y-W\Ex[\theta|\tau,\lambda]}^{2} \leq 2n\norm{y}^{2}+
2n^{3}\norm{y}^{2} \;.
\]
\end{lemma}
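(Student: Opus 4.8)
The plan is to make the dependence of $y-W\Ex[\theta|\tau,\lambda]$ on $(\tau,\lambda)$ completely transparent by recognizing $W\Ex[\theta|\tau,\lambda]$ as the image of $y$ under a ``hat-type'' operator whose spectrum lies in $[0,1]$, uniformly in $(\tau,\lambda)$. First I would rewrite $\Ex[\theta|\tau,\lambda]$ in the standard normal--normal conjugate form that underlies \eqref{eq:theta_cond_mean}, namely $\Ex[\theta|\tau,\lambda] = (\lambda_0 W^TW + \Delta)^{-1}\lambda_0 W^T y$, where $\Delta = \mathrm{blockdiag}(\lambda_0 D_\tau^{-1},\Lambda)$ is the (positive definite) prior precision matrix of $\theta$ under \eqref{eq:NormalGammaModel}. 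Then $W\Ex[\theta|\tau,\lambda] = Hy$ with $H := \lambda_0 W(\lambda_0 W^TW+\Delta)^{-1}W^T$, so that $y - W\Ex[\theta|\tau,\lambda] = (I-H)y$ and it suffices to control $I-H$.

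The key step will be to show that $0 \preceq H \preceq I$ for every $(\tau,\lambda)$. I would do this by setting $G = \lambda_0 W^TW + \Delta \succ 0$ and $M = \lambda_0^{1/2}WG^{-1/2}$: then $H = MM^T \succeq 0$, and $M^TM = G^{-1/2}(\lambda_0 W^TW)G^{-1/2} \preceq G^{-1/2}GG^{-1/2} = I$ because $\lambda_0 W^TW \preceq G$ (as $\Delta \succeq 0$); since $MM^T$ and $M^TM$ have the same nonzero eigenvalues and $MM^T \succeq 0$, this gives $H \preceq I$. Consequently $0 \preceq (I-H)^2 \preceq I-H \preceq I$, so
\[
\norm{y - W\Ex[\theta|\tau,\lambda]}^2 = y^T(I-H)^2 y \leq \norm{y}^2 \leq 2n\norm{y}^2 + 2n^3\norm{y}^2 ,
\]
which is the claimed inequality (in fact the stated constants are far from tight).

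If one would rather stay with the matrices already present in \eqref{eq:theta_cond_mean}, the same conclusion follows from a direct manipulation: substitute \eqref{eq:theta_cond_mean} into $W\Ex[\theta]=X\Ex[\beta]+Z\Ex[u]$ and use $\msl = I - \lambda_0 X\tsli X^T$ to collapse the cross terms, obtaining $y - W\Ex[\theta|\tau,\lambda] = \msl y - \lambda_0\,\msl Z\qsli Z^T\msl\,y = \msl^{1/2}\bigl(I - \lambda_0\,\msl^{1/2}Z\,\qsli\,Z^T\msl^{1/2}\bigr)\msl^{1/2}y$; one then checks $0 \preceq \msl \preceq I$ (because $\lambda_0 X\tsli X^T = X(X^TX+D_\tau^{-1})^{-1}X^T$ has eigenvalues in $[0,1)$) and, with $B = \msl^{1/2}Z$ so that $\qsl = \lambda_0 B^TB + \Lambda$, that $0 \preceq \lambda_0 B\qsli B^T \preceq I$ by the same $M^TM \preceq I$ trick. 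I expect the main obstacle to be precisely this uniform-in-$(\tau,\lambda)$ control of the operators involved: note that $\lambda_0 Z\qsli Z^T$ by itself is \emph{not} uniformly bounded --- it grows like $\msl^{-1}$ as $\Lambda \to 0$ --- so one cannot simply estimate the terms of \eqref{eq:theta_cond_mean} one at a time; the $\msl$ factors must be kept adjacent so that the cancellation can be exploited, which is exactly what working with the single operator $I-H$ accomplishes.
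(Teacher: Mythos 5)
Your proposal is correct, and it takes a genuinely different route from the paper. The paper's proof starts from the block form \eqref{eq:theta_cond_mean}, collapses the cross terms to get exactly the identity you mention, $y-W\Ex[\theta|\tau,\lambda]=\msl y-\lambda_0\msl Z\qsli Z^{T}\msl y$, and then splits this into two pieces via $\norm{a-b}^2\le 2\norm{a}^2+2\norm{b}^2$, bounding each piece with \emph{Frobenius}-norm estimates ($\norm{\msl}^2\le n$ and $\norm{\lambda_0\msl^{1/2}Z\qsli Z^{T}\msl^{1/2}}^2\le n$ from Lemmas~\ref{lemma:m_mat_lem} and~\ref{lemma:ZQ_lemma}); that is precisely where the constants $2n$ and $2n^3$ come from. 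Your first argument instead works with the unpartitioned hat operator $H=\lambda_0 W(\lambda_0 W^TW+\Delta)^{-1}W^T$ (the mean $(\lambda_0W^TW+C)^{-1}\lambda_0W^Ty$ is exactly what the paper's Appendix A derives before block-inverting, so the starting point is legitimate), and your $MM^T$ versus $M^TM$ argument correctly gives $0\preceq H\preceq I$, hence $\norm{(I-H)y}^2\le\norm{y}^2$. This is a strictly sharper bound obtained with less work, and the stated inequality follows trivially since $2n+2n^3\ge 1$. Your second route is essentially the paper's decomposition but with spectral rather than Frobenius bounds, again yielding $\norm{y}^2$; your closing remark about keeping the $\msl$ factors adjacent correctly identifies why the paper needs its Lemma~\ref{lemma:ZQ_lemma} rather than a bound on $\lambda_0 Z\qsli Z^T$ alone. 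What the paper's cruder approach buys is only uniformity of method: the same Frobenius-norm lemmas are reused in Lemmas~\ref{lemma:beta_ineq} and~\ref{lemma:u_ineq}, where the analogous cancellations are not available; for the present lemma your argument is preferable.
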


\begin{lemma}
\label{lemma:beta_ineq}
For all $\tau \in \mathbb{R}^{p}_{+}$ and $\lambda \in
\mathbb{R}^{m+1}_{+}$ ,
\begin{itemize}
\item[(1)] $\tr(\Var[\beta|\tau,\lambda]) \leq
  \lambda_{0}^{-1}\sum_{j=1}^{p}\tau_{j}
  +c^{*}\,\tr(ZZ^{T})\sum_{i=1}^{m}\lambda_{i}^{-1}$ , \hspace*{1mm}
  \mbox{and}

\item[(2)] $\norm{\Ex[\beta|\tau,\lambda]}^{2} \leq
  c^{*}n^{2}\norm{y}^{2}\left(s_{\max}^{2}\sum_{j=1}^{p}\tau_{j}+1\right)$
  .
\end{itemize}
where $s_{\max}$ is the largest singular value of $X$ and $c^{*}$ is a
finite positive constant.
\end{lemma}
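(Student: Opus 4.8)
\medskip
\noindent\textit{Proof idea.}
The plan is to sidestep the matrix algebra implicit in \eqref{eq:theta_cond_mean}--\eqref{eq:theta_cond_var} and instead exploit the fact that, for fixed $(\tau,\lambda)$, the distribution of $\theta|\tau,\lambda,y$ is the conjugate Gaussian posterior obtained from the likelihood $y|\theta\sim\mbox{N}_{n}(W\theta,\lambda_{0}^{-1}I)$ and the prior $\theta\sim\mbox{N}_{p+q}(0,\Sigma_{0})$, where $\Sigma_{0}$ is block diagonal with blocks $\lambda_{0}^{-1}D_{\tau}$ and $\Lambda^{-1}$; this is exactly the content of \eqref{eq:fullposterior}, and the corresponding precision matrix is $\Sigma_{0}^{-1}+\lambda_{0}W^{T}W$.

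For part (1): since $\lambda_{0}W^{T}W\succeq 0$, the posterior precision dominates the prior precision, so $\Var[\theta|\tau,\lambda]=(\Sigma_{0}^{-1}+\lambda_{0}W^{T}W)^{-1}\preceq\Sigma_{0}$; restricting to the leading $p\times p$ block (an order-preserving operation) gives $\Var[\beta|\tau,\lambda]\preceq\lambda_{0}^{-1}D_{\tau}$, hence $\tr(\Var[\beta|\tau,\lambda])\le\lambda_{0}^{-1}\sum_{j=1}^{p}\tau_{j}$. This is already stronger than the asserted inequality, so (1) holds with any positive value of the constant $c^{*}$ multiplying $\tr(ZZ^{T})\sum_{i}\lambda_{i}^{-1}$. (Equivalently, a Schur-complement/Woodbury reduction of \eqref{eq:theta_cond_var} gives the closed form $\Var[\beta|\tau,\lambda]=\lambda_{0}^{-1}\bigl(X^{T}(I+\lambda_{0}Z\Lambda^{-1}Z^{T})^{-1}X+D_{\tau}^{-1}\bigr)^{-1}\preceq\lambda_{0}^{-1}D_{\tau}$.)

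For part (2): I would use a variational characterization of the posterior mean. Reading the $\theta$-dependent factors off \eqref{eq:fullposterior}, $\hat\theta:=\Ex[\theta|\tau,\lambda]$ is the minimizer of $Q(\theta)=\tfrac{\lambda_{0}}{2}\norm{y-W\theta}^{2}+\tfrac{\lambda_{0}}{2}\beta^{T}D_{\tau}^{-1}\beta+\tfrac{1}{2}u^{T}\Lambda u$. Comparing the value at $\hat\theta$ with the value at $\theta=0$ and discarding the two nonnegative terms that do not involve $D_{\tau}^{-1}$ gives $\tfrac{\lambda_{0}}{2}\hat\beta^{T}D_{\tau}^{-1}\hat\beta\le Q(\hat\theta)\le Q(0)=\tfrac{\lambda_{0}}{2}\norm{y}^{2}$, so $\sum_{j}\hat\beta_{j}^{2}/\tau_{j}\le\norm{y}^{2}$ and therefore $\norm{\hat\beta}^{2}\le(\max_{j}\tau_{j})\sum_{j}\hat\beta_{j}^{2}/\tau_{j}\le\norm{y}^{2}\sum_{j=1}^{p}\tau_{j}$. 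To obtain the stated form, note that if $X=0$ the only $\beta$-dependent part of $Q$ is $\tfrac{\lambda_{0}}{2}\beta^{T}D_{\tau}^{-1}\beta$, forcing $\hat\beta=0$, while if $X\neq 0$ then $s_{\max}>0$ and any $c^{*}$ with $c^{*}n^{2}s_{\max}^{2}\ge 1$ works.

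The one genuinely delicate point is in part (1). A term-by-term treatment of \eqref{eq:theta_cond_var} would bound the cross term $\lambda_{0}^{2}\tsl^{-1}X^{T}Z\qsl^{-1}Z^{T}X\tsl^{-1}$ via $\qsl^{-1}\preceq\Lambda^{-1}$, but its trace then picks up a spurious factor $\sum_{j}\tau_{j}$, because $\norm{\lambda_{0}\tsl^{-1}X^{T}}^{2}$ is not bounded uniformly in $\tau$ --- it grows like $\max_{j}\tau_{j}$ precisely when $X$ is rank deficient, i.e., in the $p>n$ regime of interest. The joint-precision argument above avoids this. If one nevertheless prefers to work directly from \eqref{eq:theta_cond_mean}--\eqref{eq:theta_cond_var} (which is convenient for reusing the bounds of \citet{pal2014geometric}), the ingredients needed are the push-through identity $\lambda_{0}\tsl^{-1}X^{T}=D_{\tau}X^{T}\msl$, the estimates $\norm{\msl}\le 1$ and $\norm{\msl XD_{\tau}X^{T}\msl}\le 1/4$ (both consequences of $\msl=(I+XD_{\tau}X^{T})^{-1}$), and $\qsl^{-1}\preceq\Lambda^{-1}$.
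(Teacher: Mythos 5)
Your proof is correct, but it takes a genuinely different route from the paper's, and in both parts it is the more elementary one. For (1), the paper computes $\tr(\Var[\beta|\tau,\lambda])=\tr(\tsli)+\tr(\lambda_0^2\tsli X^TZ\qsli Z^TX\tsli)$ from \eqref{eq:theta_cond_var}, bounds $\tr(\tsli)\le\lambda_0^{-1}\sum_j\tau_j$, and controls the cross term by $\tr(Z\qsli Z^T)\,\norm{\lambda_0\tsli X^T}^2$, invoking a nontrivial uniform bound $\norm{\lambda_0\tsli X^T}^2\le c^*$ (Lemma~\ref{lemma:exp_beta_lem}, which rests on the cited result of \citet{khare2011spectral}); this is where the second summand in (1) comes from. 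Your Loewner argument $\Var[\theta|\tau,\lambda]=(\lambda_0W^TW+C)^{-1}\preceq C^{-1}$ followed by restriction to the leading block yields the strictly stronger bound $\tr(\Var[\beta|\tau,\lambda])\le\lambda_0^{-1}\sum_j\tau_j$ with no machinery at all, and the extra term in the statement is then harmless slack. For (2), the paper factors $\Ex[\beta|\tau,\lambda]=\lambda_0\tsli X^T(I-\lambda_0Z\qsli Z^T\msl)y$ and bounds the two factors separately (again via $c^*$ and via Lemma~\ref{lemma:exp_beta_lem}(2)), whereas your comparison of $Q(\hat\theta)$ with $Q(0)$ gives $\hat\beta^TD_\tau^{-1}\hat\beta\le\norm{y}^2$ and hence $\norm{\hat\beta}^2\le\norm{y}^2\sum_j\tau_j$, which implies the stated form with a different (but equally serviceable) constant $c^*$ once the degenerate case $X=0$ is handled as you do; since the main proof only uses $c^*$ as ``some finite positive constant,'' the change of constant is immaterial downstream. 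What your approach buys is independence from the \citet{khare2011spectral} supremum lemma; what the paper's buys is a single constant $c^*$ that is reused verbatim in Lemma~\ref{lemma:u_ineq} and in the drift coefficients. One correction to your closing remark: $\norm{\lambda_0\tsli X^T}^2$ \emph{is} bounded uniformly in $\tau$ and $\lambda$ even when $X$ is rank deficient --- that is exactly the content of Lemma~\ref{lemma:exp_beta_lem}(1) --- so the paper's term-by-term treatment does not pick up a factor of $\sum_j\tau_j$ in the cross term; your proposed substitute estimates (the push-through identity and $\qsli\preceq\Lambda^{-1}$) are fine but unnecessary for that purpose.
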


\begin{lemma}
\label{lemma:frac_beta_lem}
For all $\tau\in\mathbb{R}^{p}_{+}$ and $\lambda \in
\mathbb{R}^{m+1}_{+}$ ,
\[
\Ex\left[\sum_{j=1}^{p}\frac{1}{\abs{\tilde{\beta}_{j}}^{\nu(c)}}\Big|\tau,\lambda\right]
\leq p\kappa(c)s_{\max}^{\nu(c)}\,\lambda_{0}^{\nu(c)/2} +
\kappa(c)\lambda_{0}^{\nu(c)/2}\sum_{j=1}^{p}\frac{1}{\tau_{j}^{\nu(c)/2}}
\;,
\]
where
\[
\kappa(c):=
\frac{\Gamma\left(\frac{1-\nu(c)}{2}\right)2^{\frac{1-\nu(c)}{2}}}{\sqrt{2\pi}}
\;,
\]
and $s_{\max}$ is the largest singular value of $X$.
\end{lemma}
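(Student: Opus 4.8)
The plan is to bound the conditional expectation coordinate by coordinate, reducing each term to a one-dimensional Gaussian estimate. Conditional on $\tau$ and $\lambda$, the vector $\tilde{\theta}$ is multivariate normal with mean and covariance given by \eqref{eq:theta_cond_mean}--\eqref{eq:theta_cond_var}, so $\tilde{\beta}_j$ is a univariate normal random variable with variance $\sigma_j^2 := \big(\Var[\tilde{\beta}\,|\,\tau,\lambda]\big)_{jj}$, the $(j,j)$ entry of the upper-left block of \eqref{eq:theta_cond_var}. The key one-dimensional fact I would use is that, for $W \sim \mbox{N}(\mu,\sigma^2)$ and any $t \in (0,1)$,
\[
\Ex\,\abs{W}^{-t} \;\le\; \frac{\Gamma\big(\tfrac{1-t}{2}\big)\,2^{(1-t)/2}}{\sqrt{2\pi}}\,\sigma^{-t},
\]
the right-hand side being precisely $\Ex\abs{W}^{-t}$ when $\mu=0$. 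This extremal property of the centered Gaussian holds because $w\mapsto \abs{w}^{-t}$ is symmetric and unimodal, so its convolution with the symmetric unimodal $\mbox{N}(0,\sigma^2)$ density is again symmetric and unimodal in the location parameter, hence maximized at $\mu=0$; alternatively, it is one of the bounds from \citet{pal2014geometric} alluded to in the remark above. Taking $t = \nu(c) \in (0,1/2]$ gives $\Ex\big[\abs{\tilde{\beta}_j}^{-\nu(c)}\,\big|\,\tau,\lambda\big] \le \kappa(c)\,\sigma_j^{-\nu(c)}$, so it remains to bound $\sigma_j^2$ from below.

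For the variance bound, \eqref{eq:theta_cond_var} gives $\Var[\tilde{\beta}\,|\,\tau,\lambda] = \tsli + \lambda_0^2\,\tsli X^{T}Z\,\qsli\,Z^{T}X\,\tsli$; the second summand has the form $\lambda_0^2\,C\,\qsli\,C^{T}$ with $C = \tsli X^{T}Z$ and is positive semidefinite, so $\Var[\tilde{\beta}\,|\,\tau,\lambda] \succeq \tsli = \lambda_0^{-1}(X^{T}X + D_{\tau}^{-1})^{-1}$. Since $X^{T}X \preceq s_{\max}^2 I_p$ and $A \mapsto A^{-1}$ reverses the L\"owner order on positive definite matrices, $(X^{T}X + D_{\tau}^{-1})^{-1} \succeq (s_{\max}^2 I_p + D_{\tau}^{-1})^{-1}$, and the latter is diagonal with $j$th entry $(s_{\max}^2 + \tau_j^{-1})^{-1}$. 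Comparing $(j,j)$ entries, $\sigma_j^2 \ge \lambda_0^{-1}(s_{\max}^2 + \tau_j^{-1})^{-1}$, hence $\sigma_j^{-\nu(c)} \le \lambda_0^{\nu(c)/2}\big(s_{\max}^2 + \tau_j^{-1}\big)^{\nu(c)/2}$. Finally, $\nu(c)/2 \in (0,1/4]$, so $x\mapsto x^{\nu(c)/2}$ is subadditive on $[0,\infty)$ and $\big(s_{\max}^2 + \tau_j^{-1}\big)^{\nu(c)/2} \le s_{\max}^{\nu(c)} + \tau_j^{-\nu(c)/2}$. Chaining these estimates and summing over $j$,
\begin{align*}
\Ex\left[\sum_{j=1}^{p}\frac{1}{\abs{\tilde{\beta}_{j}}^{\nu(c)}}\,\bigg|\,\tau,\lambda\right]
&\le \kappa(c)\,\lambda_0^{\nu(c)/2}\sum_{j=1}^{p}\left(s_{\max}^{\nu(c)} + \frac{1}{\tau_j^{\nu(c)/2}}\right) \\
&= p\,\kappa(c)\,s_{\max}^{\nu(c)}\,\lambda_0^{\nu(c)/2} + \kappa(c)\,\lambda_0^{\nu(c)/2}\sum_{j=1}^{p}\frac{1}{\tau_j^{\nu(c)/2}},
\end{align*}
which is the claimed bound.

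The only genuinely non-routine ingredient is the extremal Gaussian estimate in the first display; once it is available, the semidefinite comparison that lower-bounds $\sigma_j^2$ and the subadditivity of $x^{\nu(c)/2}$ are both elementary. So the main obstacle is really just locating (or independently verifying) that first bound, which — as the remark notes — can be imported from \citet{pal2014geometric}.
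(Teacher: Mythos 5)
Your proposal is correct and follows essentially the same route as the paper: the Gaussian negative-moment bound $\Ex[\abs{\tilde{\beta}_j}^{-\nu(c)}\,|\,\tau,\lambda]\le \kappa(c)\sigma_j^{-\nu(c)}$ (which the paper imports as Proposition A1 of \citet{pal2014geometric}), followed by dropping the positive semidefinite part of $\Var[\tilde\beta|\tau,\lambda]$, the L\"owner comparison $X^TX\preceq s_{\max}^2 I_p$ to get $\sigma_j^2\ge \lambda_0^{-1}(s_{\max}^2+\tau_j^{-1})^{-1}$, and subadditivity of $x\mapsto x^{\nu(c)/2}$. Your added Anderson-type justification that the centered Gaussian is extremal is a correct, self-contained substitute for the cited proposition, but it does not change the structure of the argument.
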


\begin{lemma}
\label{lemma:u_ineq}
Assume that $Z$ has full column rank.  For all $\tau \in
\mathbb{R}^{p}_{+}$, $\lambda \in \mathbb{R}^{m+1}_{+}$ and
$i=1,\ldots,m$ ,
\begin{itemize}
\item[(1)] $\tr(R_{i}\qsli R_{i}^{T}) \leq q_{i}\lambda_{i}^{-1}$ , \hspace*{1mm}
  \mbox{and}

\item[(2)] $||\Ex[u_{i}|\tau,\lambda]||^{2} \leq
  q_{i}\tr[(Z^{T}Z)^{-1}]n^{3}\norm{y}^{2}\left(s_{\max}^{2}\sum_{j=1}^{p}\tau_{j}+1\right)$
  .
\end{itemize}
\end{lemma}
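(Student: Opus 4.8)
The plan is to treat the two parts separately: part~(1) reduces to a Loewner-order comparison, while part~(2) needs some care because $\msl$ can be nearly singular when $\tau$ is large.

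For part~(1), I would first use the Woodbury identity to rewrite $\msl = I - \lambda_0 X \tsli X^T = I - X(X^TX + D_\tau^{-1})^{-1}X^T = (I + XD_\tau X^T)^{-1}$, so that $0 \prec \msl \preceq I$. Consequently $Z^T\msl Z \succeq 0$, and therefore $\qsl = \lambda_0 Z^T\msl Z + \Lambda \succeq \Lambda$, which gives $\qsli \preceq \Lambda^{-1}$. Since $R_i$ just extracts the rows belonging to the $i$th block, $R_i\qsli R_i^T \preceq R_i\Lambda^{-1}R_i^T = \lambda_i^{-1}I_{q_i}$, and taking traces yields $\tr(R_i\qsli R_i^T) \le q_i\lambda_i^{-1}$. (This argument does not use the full-column-rank hypothesis on $Z$; that is only needed in part~(2), where $(Z^TZ)^{-1}$ appears.)

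For part~(2), write $\mu := \Ex[u|\tau,\lambda] = \lambda_0\qsli Z^T\msl y$ (the vector whose $i$th block is $\Ex[u_i|\tau,\lambda]$), so that $\lambda_0 Z^T\msl Z\mu + \Lambda\mu = \lambda_0 Z^T\msl y$. Pairing both sides with $\mu$ and discarding the nonnegative term $\mu^T\Lambda\mu$ gives $\mu^T Z^T\msl Z\mu \le \mu^T Z^T\msl y$; since $\msl \succ 0$ induces an inner product, Cauchy--Schwarz in that inner product yields $\|Z\mu\|_{\msl} \le \|y\|_{\msl} \le \|y\|$, where $\|v\|_{\msl}^2 := v^T\msl v$ and the last step uses $\msl \preceq I$. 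The delicate point is converting this $\msl$-weighted bound back to the Euclidean norm. For that I would use $\msl = (I + XD_\tau X^T)^{-1}$ together with $\lambda_{\max}(XD_\tau X^T) \le \tr(XD_\tau X^T) = \tr(D_\tau X^TX) \le s_{\max}^2\sum_{j=1}^p\tau_j$, which gives $\msl \succeq (1 + s_{\max}^2\sum_j\tau_j)^{-1}I$ and hence $\|Z\mu\|^2 \le (s_{\max}^2\sum_j\tau_j + 1)\|y\|^2$. Finally, since $Z$ has full column rank, $\mu = (Z^TZ)^{-1}Z^T(Z\mu)$, so $\|\mu\|^2 \le \|(Z^TZ)^{-1}\|_{\mathrm{op}}\,\|Z\mu\|^2 \le \tr[(Z^TZ)^{-1}]\,(s_{\max}^2\sum_j\tau_j + 1)\|y\|^2$, and $\|\Ex[u_i|\tau,\lambda]\|^2 = \|R_i\mu\|^2 \le \|\mu\|^2$ is then bounded by the asserted quantity since $q_i n^3 \ge 1$. (The slack factors $q_i$ and $n^3$ in the statement are presumably artifacts of the alternative route that writes $Zu = W\theta - X\beta$, bounds $\|\Ex[W\theta|\tau,\lambda]\| \le \|y\|$, and uses Lemma~\ref{lemma:beta_ineq}(2) for $\|X\Ex[\beta|\tau,\lambda]\|^2$; either derivation is fine.) I expect the $\msl$-to-Euclidean conversion in part~(2) to be the only real obstacle; everything else is routine.
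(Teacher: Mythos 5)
Your proof is correct, but part (2) takes a genuinely different route from the paper's. For part (1) you argue exactly as the paper does ($\qsli \preceq \Lambda^{-1}$, then take traces), and your observation that full column rank of $Z$ is not needed there is accurate. For part (2), the paper proceeds by repeated Frobenius-norm submultiplicativity: it inserts $(Z^{T}Z)^{-1}Z^{T}Z$ and $\msl^{-1/2}\msl^{1/2}$ into $\norm{\lambda_{0}\qsli Z^{T}\msl y}^{2}$ and then invokes its preliminary bounds $\norm{\lambda_{0}\msl^{1/2}Z\qsli Z^{T}\msl^{1/2}}^{2}\leq n$ (Lemma~\ref{lemma:ZQ_lemma}(2)), $\tr(\msl)\leq n$, and $\tr(\msl^{-1})\leq n(\tau_{\max}s_{\max}^{2}+1)$ (Lemma~\ref{lemma:m_mat_lem}(2)); the factors $q_{i}$ and $n^{3}$ in the statement are precisely the traces picked up along the way. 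You instead exploit the normal equations $\qsl\mu=\lambda_{0}Z^{T}\msl y$ satisfied by $\mu=\Ex[u|\tau,\lambda]$, drop the nonnegative term $\mu^{T}\Lambda\mu$, and apply Cauchy--Schwarz in the $\msl$-inner product to get $\norm{Z\mu}_{\msl}\leq\norm{y}_{\msl}\leq\norm{y}$, converting back to the Euclidean norm via the same spectral lower bound $\msl\succeq(1+s_{\max}^{2}\sum_{j}\tau_{j})^{-1}I$ and then using full column rank through $\mu=(Z^{T}Z)^{-1}Z^{T}(Z\mu)$. This variational argument is self-contained (it does not need Lemma~\ref{lemma:ZQ_lemma}) and yields the strictly sharper bound $\tr[(Z^{T}Z)^{-1}]\norm{y}^{2}\bigl(s_{\max}^{2}\sum_{j=1}^{p}\tau_{j}+1\bigr)$, from which the stated inequality follows since $q_{i}n^{3}\geq 1$; the only cost is that it obscures where the paper's extra constants come from, which, as you note, is immaterial for the drift analysis since only the dependence on $\sum_{j}\tau_{j}$ matters downstream.
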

Substituting into (\ref{eq:term1_eq1}) gives
\begin{equation}
\label{eq:term1_eq2}
\begin{aligned}
  \Ex\left[\Ex\left[h(\tilde{\theta})\Big|\tau,\lambda\right]\right.&\bigg|\left.\theta,\lambda\right]\leq \alpha_{1}\rank(X)\lambda_{0}^{-1} + \alpha_{1}\tr(ZZ^{T})\sum_{i=1}^{m}\lambda_{i}^{-1}  \\
  & + \alpha_{2}\lambda_{0}^{-1}\sum_{j=1}^{p}\Ex\left[\tau_{j}\big|\theta,\lambda\right] + \alpha_{2}c^{*}\tr(ZZ^{T})\sum_{i=1}^{m}\lambda_{i}^{-1} \\
  & + \alpha_{2}c^{*}n^{2}\norm{y}^{2}s_{\max}^{2}\sum_{j=1}^{p}\Ex\left[\tau_{j}\big|\theta,\lambda\right] \\
  & + p\kappa(c)s_{\max}^{\nu(c)}\,\lambda_{0}^{\nu(c)/2} \\
  & + \kappa(c)\lambda_{0}^{\nu(c)/2}\sum_{j=1}^{p}\Ex\left[\frac{1}{\tau_{j}^{\nu(c)/2}}\bigg|\theta,\lambda\right]\\
  & + \sum_{i=1}^{m}\delta_{i}q_{i}\lambda_{i}^{-1}\\
  & + \max_{i}\{\delta_{i}\}q\,\tr[(Z^{T}Z)^{-1}]\norm{y}^{2}n^{3}s_{\max}^{2}\sum_{j=1}^{p}\Ex\left[\tau_{j}|\theta,\lambda\right]\\
  & + K_{0}(\alpha_{1},\alpha_{2},\delta) \;,
\end{aligned}
\end{equation}
where
\[
K_{0}(\alpha_{1},\alpha_{2},\delta) := 2\alpha_{1}(n\norm{y}^{2} +
n^{3}\norm{y}^{2}) +\alpha_{2}c^{*}n^{2}\norm{y}^{2} +
\tr[(Z^{T}Z)^{-1}]\norm{y}^{2}n^{3}\sum_{i=1}^{m}\delta_{i}q_{i} \;.
\]

\subsection{A bound on $\Ex \Big[ \Ex \big[ g(\tilde{\lambda}) \big|
  \tau, \theta \big] \Big| \theta, \lambda \Big]$}
\label{sec:second_term}

From (\ref{eq:lambda0_post}), we have
\begin{equation}
\label{eq:lambda0_exp1}
\begin{aligned}
\Ex[\lambda_{0}|\theta,\tau] &= \frac{\Gamma\left(\frac{n+p+2a_{0}+2}{2}\right)}{\Gamma\left(\frac{n+p+2a_{0}}{2}\right)}\left(\frac{||y-W\theta||^{2} + \beta^{T}D_{\tau}^{-1}\beta+2b_{0}}{2}\right)^{-1} \\
&\leq (n+p+2a_{0})\,b_{0}^{-1} \;,
\end{aligned}
\end{equation}
and
\begin{equation}
\label{eq:lambda0_exp2}
\begin{aligned}
  \Ex[\lambda_{0}^{-1}|\theta,\tau] &= \frac{\Gamma\left(\frac{n+p+2a_{0}-2}{2}\right)}{\Gamma\left(\frac{n+p+2a_{0}}{2}\right)}\left(\frac{\norm{y-W\theta}^{2} + \beta^{T}D_{\tau}^{-1}\beta+2b_{0}}{2}\right)\\
  &= \frac{1}{n+p+2a_{0}-2}\left(\norm{y-W\theta}^{2} +
    \beta^{T}D_{\tau}^{-1}\beta+2b_{0}\right) \;.
\end{aligned}
\end{equation}
Also, from Jensen's inequality and (\ref{eq:lambda0_exp1}),
\begin{equation}
\label{eq:lambda_nu_exp}
\Ex\left[\lambda_{0}^{\nu(c)/2}\big|\tau,\theta\right] \leq \Ex\left[\lambda_{0}|\tau,\theta\right]^{\nu(c)/2} \leq [(n+p+2a_{0})\,b_{0}^{-1}]^{\nu(c)} \;.
\end{equation}

Similarly, from (\ref{eq:lambda_i_post}), for $i = 1,2,\ldots,m$ we
have
\begin{equation}
\label{eq:lambda_i_exp1}
\Ex[\lambda_{i}|\theta,\tau] = \frac{\Gamma\left(\frac{q_{i}+2a_{i}+2}{2}\right)}{\Gamma\left(\frac{q_{i}+2a_{i}}{2}\right)}\left(\frac{\norm{u_{i}}^{2}+2b_{i}}{2}\right)^{-1} \leq (q_{i}+2a_{i})\,b_{i}^{-1} \;,
\end{equation}
and
\begin{equation}
\label{eq:lambda_i_exp2}
\begin{aligned}
  \Ex[\lambda_{i}^{-1}|\theta,\tau] &= \frac{\Gamma\left(\frac{q_{i}+2a_{i}-2}{2}\right)}{\Gamma\left(\frac{q_{i}+2a_{i}}{2}\right)}\left(\frac{\norm{u_{i}}^{2} +2b_{i}}{2}\right)\\
  &= \frac{1}{q_{i}+2a_{i}-2}\left(\norm{u_{i}}^{2} +2b_{i}\right) \;.
\end{aligned}
\end{equation}
Conditions (2) and (3) of Proposition \ref{prop:ge_p_pred} imply that
all of the above Gamma functions have positive arguments.

From (\ref{eq:lambda0_exp1})-(\ref{eq:lambda_i_exp2}), we have
\begin{equation}
\label{eq:term2_eq2}
\begin{aligned}
  \Ex\left[\Ex\left[g(\tilde{\lambda})\Big|\tau,\theta\right]\bigg|\theta,\lambda\right] &\leq \frac{1}{n+p+2a_{0}-2}\norm{y-W\theta}^{2}\\
  & \quad + \frac{1}{n+p+2a_{0}-2}\sum_{j=1}^{p}\beta_{j}^{2}\,\Ex\left[\tau_{j}^{-1}|\theta,\lambda\right]\\
  &\quad +
  \sum_{i=1}^{m}\frac{\eta_{i}}{q_{i}+2a_{i}-2}\norm{u_{i}}^{2} +
  K_{1}(\alpha_{3},\alpha_{4},\eta) \;,
\end{aligned}
\end{equation}
where
\begin{align*}
K_{1}(\alpha_{3},\alpha_{4},\eta) &:= \alpha_{3}(n+p+2a_{0}+1)\,b_{0}^{-1} \\
& \hspace*{15mm}+ \alpha_{4}[(n+p+2a_{0}+1)\,b_{0}^{-1}]^{\nu(c)}+ \frac{2 b_{0}}{n+p+2a_{0}-2}  \\
& \hspace*{15mm} + \sum_{i=1}^{m}(q_{i}+2a_{i}+1)\,b_{i}^{-1} + \sum_{i=1}^{m}\frac{2\eta_{i}b_{i}}{q_{i}+2a_{i}-2}.
\end{align*}
In the next subsection, we bound the right-hand sides of
\eqref{eq:term1_eq2} and \eqref{eq:term2_eq2}, and then combine these
new bounds to get a bound on
$\Ex[v(\tilde{\theta},\tilde{\lambda})|\theta,\lambda]$.

\subsection{A bound on
  $\Ex[v(\tilde{\theta},\tilde{\lambda})|\theta,\lambda]$}
\label{sec:bound}

If $X \sim \mbox{GIG}(\zeta,\xi,\psi)$, then
\begin{equation}
\label{eq:gig_exp}
\Ex[X^{p}] = \frac{\psi^{p/2}K_{\zeta+p}(\sqrt{\xi\psi})}{\xi^{p/2} K_{\zeta}(\sqrt{\xi\psi})} \;.
\end{equation}
Hence, for all $(\theta,\lambda) \in \mathsf{X}$,
\begin{equation}
\label{eq:tau_exp_ineq2}
\Ex[\tau_{j}|\theta,\lambda] = \sqrt{\frac{\lambda_{0}\beta_{j}^{2}}{2d}}\,\frac{K_{c+\frac{1}{2}}\left(\sqrt{2d\lambda_{0}\beta_{j}^{2}}\right)}{K_{c-\frac{1}{2}}\left(\sqrt{2d\lambda_{0}\beta_{j}^{2}}\right)} \;,
\end{equation}
\begin{equation}
\label{eq:tau_exp_ineq3}
\Ex[\tau_{j}^{-1}|\theta,\lambda] = \sqrt{\frac{2d}{\lambda_{0}\beta_{j}^{2}}}\,\frac{K_{c-\frac{3}{2}}\left(\sqrt{2d\lambda_{0}\beta_{j}^{2}}\right)}{K_{c-\frac{1}{2}}\left(\sqrt{2d\lambda_{0}\beta_{j}^{2}}\right)} \;,
\end{equation}
and
\begin{equation}
\label{eq:frac_tau_exp}
\Ex\left[\frac{1}{\tau_{j}^{\nu(c)/2}}\bigg|\theta,\lambda\right] = \left(\frac{2d}{\lambda_{0}\beta_{j}^{2}}\right)^{\nu(c)/4}\,\frac{K_{c-\frac{1}{2}-\frac{\nu(c)}{2}}\left(\sqrt{2d\lambda_{0}\beta_{j}^{2}}\right)}{K_{c-\frac{1}{2}}\left(\sqrt{2d\lambda_{0}\beta_{j}^{2}}\right)} \;,
\end{equation}
for $j=1,2,\ldots,p$.

Next, we will make use of the following lemmas, which are proved in
the Appendix.
\begin{lemma}
\label{lemma:tau_exp_lem}
For all $(\theta,\lambda) \in \mathsf{X}$ ,
\begin{itemize}
\item[1.] $\Ex[\tau_{j}|\theta,\lambda] \leq \frac{4c+1}{4d} +
  \frac{\lambda_{0}\beta_{j}^{2}}{2} \;, \hspace*{1mm} \mbox{and}$
\item[2.] $\Ex[\tau_{j}|\theta,\lambda] \leq
  \frac{c}{d}+\frac{\beta_{j}^{2}}{2C}+\frac{\lambda_{0}C}{4d} \;,$
\end{itemize}
for every $C > 0$.
\end{lemma}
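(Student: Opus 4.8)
The plan is to start from the closed form in \eqref{eq:tau_exp_ineq2} and reduce both inequalities to a single bound on a ratio of modified Bessel functions, which is then converted to the stated form by the arithmetic--geometric mean inequality. Set $x := \sqrt{2d\lambda_{0}\beta_{j}^{2}}$; this is strictly positive for every $(\theta,\lambda) \in \X$ because $|\beta_{j}| > 0$ there, so \eqref{eq:tau_exp_ineq2} may be written as $\Ex[\tau_{j}|\theta,\lambda] = \sqrt{\lambda_{0}\beta_{j}^{2}/(2d)}\, K_{c+\frac{1}{2}}(x)/K_{c-\frac{1}{2}}(x)$. The key intermediate claim I would establish is
\[
\frac{K_{c+\frac{1}{2}}(x)}{K_{c-\frac{1}{2}}(x)} \;\leq\; \frac{2c}{x} + 1 \qquad \text{for all } x > 0 \text{ and } c > 0 ,
\]
which immediately gives $\Ex[\tau_{j}|\theta,\lambda] \leq c/d + \sqrt{\lambda_{0}\beta_{j}^{2}/(2d)}$, since $\sqrt{\lambda_{0}\beta_{j}^{2}/(2d)}\cdot(2c/x) = c/d$. (An equivalent estimate could also be quoted from \citet{pal2014geometric}, whose generalized inverse Gaussian bounds we reuse.)

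For the Bessel-ratio claim I would use three standard facts: the recurrence $K_{\nu+1}(x) = (2\nu/x)K_{\nu}(x) + K_{\nu-1}(x)$; the evenness $K_{-\nu}(x) = K_{\nu}(x)$; and the well-known log-convexity of $\nu \mapsto K_{\nu}(x)$, which makes $\nu \mapsto K_{\nu+1}(x)/K_{\nu}(x)$ nondecreasing on $\mathbb{R}$. Applying the recurrence with $\nu = c - \tfrac{1}{2}$ gives $K_{c+\frac{1}{2}}(x)/K_{c-\frac{1}{2}}(x) = (2c-1)/x + K_{c-\frac{3}{2}}(x)/K_{c-\frac{1}{2}}(x)$; by evenness the remaining ratio equals $K_{(\frac{1}{2}-c)+1}(x)/K_{\frac{1}{2}-c}(x)$, and since $\tfrac{1}{2}-c \leq \tfrac{1}{2}$, monotonicity bounds it by $K_{3/2}(x)/K_{1/2}(x) = 1 + 1/x$ (using the elementary closed forms for $K_{1/2}$ and $K_{3/2}$). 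Adding the two pieces gives $2c/x + 1$. This is the one genuinely nontrivial step, and I expect it to be the main obstacle: in the shrinkage regime $0 < c < 1/2$ the orders $c - \tfrac{1}{2}$ and $c - \tfrac{3}{2}$ are negative, so no naive size comparison between Bessel functions is available, and it is exactly the interplay of evenness and log-convexity that makes the bound hold uniformly in $c$.

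Granting $\Ex[\tau_{j}|\theta,\lambda] \leq c/d + \sqrt{\lambda_{0}\beta_{j}^{2}/(2d)}$, the rest is bookkeeping via $\sqrt{uv} \leq (u+v)/2$. For part (1), take $u = 1/(2d)$ and $v = \lambda_{0}\beta_{j}^{2}$ to get $\sqrt{\lambda_{0}\beta_{j}^{2}/(2d)} \leq 1/(4d) + \lambda_{0}\beta_{j}^{2}/2$, and since $c/d + 1/(4d) = (4c+1)/(4d)$ this yields $\Ex[\tau_{j}|\theta,\lambda] \leq (4c+1)/(4d) + \lambda_{0}\beta_{j}^{2}/2$. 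For part (2), split the same product as $u = \beta_{j}^{2}/C$, $v = \lambda_{0}C/(2d)$ for an arbitrary $C > 0$, so $\sqrt{\lambda_{0}\beta_{j}^{2}/(2d)} \leq \beta_{j}^{2}/(2C) + \lambda_{0}C/(4d)$, giving $\Ex[\tau_{j}|\theta,\lambda] \leq c/d + \beta_{j}^{2}/(2C) + \lambda_{0}C/(4d)$.
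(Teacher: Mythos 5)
Your proof is correct, and its overall skeleton coincides with the paper's: both start from \eqref{eq:tau_exp_ineq2}, both reduce to the intermediate estimate $\Ex[\tau_{j}|\theta,\lambda] \leq \frac{c}{d} + \sqrt{\lambda_{0}\beta_{j}^{2}/(2d)}$, and both finish with the identical two applications of $\abs{xy}\leq(x^{2}+y^{2})/2$. The one place you genuinely diverge is the Bessel-ratio bound. The paper gets $K_{c+\frac12}(x)/K_{c-\frac12}(x) \leq (\sqrt{x^{2}+c^{2}}+c)/x \leq 2c/x + 1$ by quoting Theorem~2 of \citet{segura2011bounds}, whereas you derive the same bound $2c/x+1$ from the three-term recurrence, the evenness $K_{-\nu}=K_{\nu}$, and the monotonicity of $\nu\mapsto K_{\nu+s}(x)/K_{\nu}(x)$ --- which is exactly the \citet{ismail1978monotonicity} lemma the paper already invokes in its proof of Lemma~\ref{lemma:tau_exp_lem2}, combined with the closed forms of $K_{1/2}$ and $K_{3/2}$. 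Your route is slightly more self-contained (it needs only one external monotonicity fact, shared with Lemma~\ref{lemma:tau_exp_lem2}, rather than two separate citations) and you are right that the evenness trick is what makes the argument go through uniformly for $0<c<1/2$, where the orders are negative; the paper's route via Segura is shorter on the page but imports a sharper inequality than is actually needed. Both yield the same final constants, so nothing is lost either way.
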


\begin{lemma}
\label{lemma:tau_exp_lem2}
For all $(\theta,\lambda) \in \mathsf{X}$ ,
$$\Ex[\tau_{j}^{-1}|\theta,\lambda]  \leq d +\frac{3}{2\lambda_{0}\beta_{j}^{2}} \;.$$
\end{lemma}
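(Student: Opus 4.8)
The plan is to sidestep any direct estimate of the Bessel ratio appearing in \eqref{eq:tau_exp_ineq3} and instead reduce the bound to part~(1) of Lemma~\ref{lemma:tau_exp_lem} via an exact moment identity for the GIG law. Recall that on $\X$ we have $\tau_j|\theta,\lambda \sim \mbox{GIG}(c-\tfrac12,\,2d,\,\lambda_0\beta_j^2)$ with $\lambda_0\beta_j^2 > 0$, so that every positive and negative moment of $\tau_j$ is finite. Writing $\omega = \sqrt{2d\lambda_0\beta_j^2}$ and applying the three-term recurrence $K_{\nu+1}(\omega) = \tfrac{2\nu}{\omega}K_\nu(\omega) + K_{\nu-1}(\omega)$ with $\nu = c-\tfrac12$ to the formulas \eqref{eq:tau_exp_ineq2} and \eqref{eq:tau_exp_ineq3}, one obtains the identity
$$\Ex[\tau_j|\theta,\lambda] = \frac{2c-1}{2d} + \frac{\lambda_0\beta_j^2}{2d}\,\Ex[\tau_j^{-1}|\theta,\lambda] \;.$$
Equivalently, this identity follows by integrating $\frac{d}{d\tau_j}\big[\tau_j^{c-1/2}\exp\{-\tfrac12(2d\tau_j + \lambda_0\beta_j^2/\tau_j)\}\big]$ over $(0,\infty)$: the boundary terms vanish precisely because $d>0$ and $\lambda_0\beta_j^2>0$, which is once again why the points with $\beta_j = 0$ must be excluded from the state space.

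Solving the identity for $\Ex[\tau_j^{-1}|\theta,\lambda]$ gives
$$\Ex[\tau_j^{-1}|\theta,\lambda] = \frac{2d}{\lambda_0\beta_j^2}\,\Ex[\tau_j|\theta,\lambda] - \frac{2c-1}{\lambda_0\beta_j^2} \;,$$
and, since $2d/(\lambda_0\beta_j^2) > 0$, I would then substitute the bound $\Ex[\tau_j|\theta,\lambda] \le \tfrac{4c+1}{4d} + \tfrac{\lambda_0\beta_j^2}{2}$ supplied by Lemma~\ref{lemma:tau_exp_lem}(1). This produces
$$\Ex[\tau_j^{-1}|\theta,\lambda] \le \frac{4c+1}{2\lambda_0\beta_j^2} + d - \frac{2c-1}{\lambda_0\beta_j^2} = d + \frac{3}{2\lambda_0\beta_j^2} \;,$$
which is exactly the asserted inequality. (In fact the two bounds are equivalent through the displayed identity, so nothing is lost.)

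I do not expect a real obstacle in this argument once Lemma~\ref{lemma:tau_exp_lem} is in hand. The only step requiring care is the justification of the GIG moment identity, which rests on the finiteness of $\Ex[\tau_j^{\pm1}|\theta,\lambda]$ and on the vanishing of the boundary terms in the integration-by-parts step --- both guaranteed by $\lambda_0\beta_j^2 > 0$ on $\X$. The alternative route, bounding the ratio $K_{c-3/2}(\omega)/K_{c-1/2}(\omega)$ in \eqref{eq:tau_exp_ineq3} from first principles, would be the actual difficulty, since the order $c-\tfrac12$ can be any number in $(-\tfrac12,\infty)$ and clean uniform bounds on such ratios of modified Bessel functions are awkward; passing through Lemma~\ref{lemma:tau_exp_lem} avoids that entirely.
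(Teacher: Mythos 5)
Your proposal is correct, and it proves the lemma by a genuinely different route than the paper. The paper attacks the Bessel ratio in \eqref{eq:tau_exp_ineq3} head on: it invokes Lemma 2.2 of \cite{ismail1978monotonicity} (monotonicity of $s_{1}\mapsto K_{s_{1}+s}(x)/K_{s_{1}}(x)$) to get $K_{c-3/2}(x)/K_{c-1/2}(x)\le K_{-3/2}(x)/K_{-1/2}(x)=1+1/x$ via the closed forms of the half-integer-order Bessel functions, and then applies $\sqrt{2d/(\lambda_{0}\beta_{j}^{2})}\le d+1/(2\lambda_{0}\beta_{j}^{2})$. You instead reduce the problem to Lemma~\ref{lemma:tau_exp_lem}(1) through the exact GIG moment identity
\begin{equation*}
\Ex[\tau_{j}|\theta,\lambda]=\frac{2c-1}{2d}+\frac{\lambda_{0}\beta_{j}^{2}}{2d}\,\Ex[\tau_{j}^{-1}|\theta,\lambda]\;,
\end{equation*}
which does follow from the three-term recurrence $K_{\zeta+1}(\omega)=\tfrac{2\zeta}{\omega}K_{\zeta}(\omega)+K_{\zeta-1}(\omega)$ applied to \eqref{eq:gig_exp}, or equivalently from integrating the derivative of $\tau_{j}^{c-1/2}\exp\{-\tfrac{1}{2}(2d\tau_{j}+\lambda_{0}\beta_{j}^{2}/\tau_{j})\}$ with vanishing boundary terms (here the restriction to $\X$, i.e.\ $\beta_{j}\neq0$, is exactly what you need). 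The algebra checks out: $\tfrac{4c+1}{2\lambda_{0}\beta_{j}^{2}}-\tfrac{2c-1}{\lambda_{0}\beta_{j}^{2}}=\tfrac{3}{2\lambda_{0}\beta_{j}^{2}}$, so you land on precisely the stated constant. That the two routes give identical bounds is no accident: pushed through your identity, Segura's inequality (which underlies Lemma~\ref{lemma:tau_exp_lem}) produces the same intermediate quantity $\sqrt{2d/(\lambda_{0}\beta_{j}^{2})}+1/(\lambda_{0}\beta_{j}^{2})$ that the paper reaches via Ismail--Muldoon, after which the same arithmetic--geometric mean step is applied. What your argument buys is economy and self-containment --- one fewer external Bessel-function inequality, and a transparent explanation of why the hyperplanes $\beta_{j}=0$ must be excised from the state space; what it gives up is independence from Lemma~\ref{lemma:tau_exp_lem}, since any change or slack in that bound now propagates into this one, whereas the paper's direct estimate of the ratio $K_{c-3/2}/K_{c-1/2}$ stands on its own.
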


\begin{lemma}
\label{lemma:frac_tau_lem}
For all $(\theta,\lambda) \in \mathsf{X}$ ,
$$\Ex\left[\frac{1}{\tau_{j}^{\nu(c)/2}}\Big| \theta,\lambda\right] \leq M_{1}\frac{1}{\lambda_{0}^{\nu(c)/2}\abs{\beta_{j}}^{\nu(c)}} + M_{2} \;,$$
where $M_{1}$ is a positive constant such that $M_{1}\kappa(c) < 1$,
and $M_{2}$ is a positive finite constant.
\end{lemma}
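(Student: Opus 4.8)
The plan is to reduce the lemma to a single inequality for a ratio of modified Bessel functions and then to verify that inequality. Starting from \eqref{eq:frac_tau_exp}, put $w=\sqrt{2d\lambda_{0}\beta_{j}^{2}}$, which is strictly positive on $\X$ since $\abs{\beta_{j}}>0$ there, and set $\phi(w)=K_{c-\frac{1}{2}-\frac{\nu(c)}{2}}(w)/K_{c-\frac{1}{2}}(w)$. Because $(\tfrac{2d}{\lambda_{0}\beta_{j}^{2}})^{\nu(c)/4}=(2d)^{\nu(c)/2}w^{-\nu(c)/2}$ and $\lambda_{0}^{-\nu(c)/2}\abs{\beta_{j}}^{-\nu(c)}=(2d)^{\nu(c)/2}w^{-\nu(c)}$, the claimed bound is equivalent, after dividing by $(2d)^{\nu(c)/2}$ and multiplying by $w^{\nu(c)/2}$, to
\[
\phi(w)\;\le\;M_{1}\,w^{-\nu(c)/2}+C_{2}\,w^{\nu(c)/2}\qquad\text{for all }w>0,
\]
where $M_{1}$ is required to satisfy $M_{1}\kappa(c)<1$ (i.e. $M_{1}<\kappa(c)^{-1}$) and $C_{2}$ to be finite; the lemma then holds with $M_{2}=C_{2}(2d)^{\nu(c)/2}$.

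The crux is the behavior of $\phi$ near $0$. Using the standard small-argument expansions $K_{\zeta}(w)\sim\tfrac12\Gamma(\abs{\zeta})(w/2)^{-\abs{\zeta}}$ for $\zeta\neq0$ and $K_{0}(w)\sim-\log w$, I will show that $\ell:=\lim_{w\to0^{+}}w^{\nu(c)/2}\phi(w)$ exists and satisfies $\ell<\kappa(c)^{-1}$. A routine check of the two Bessel indices $c-\tfrac12$ and $c-\tfrac12-\tfrac{\nu(c)}{2}$, using these expansions, shows that $\ell=0$ whenever $c\ge1/2$, so the only case that matters is $c\in(0,1/2)$. There $\nu(c)=c$, both indices are nonzero, and $\abs{c-\tfrac12-\tfrac c2}=\tfrac{1-c}{2}>\tfrac12-c=\abs{c-\tfrac12}$, which yields $\ell=2^{c/2}\Gamma(\tfrac{1-c}{2})/\Gamma(\tfrac12-c)$. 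Inserting the definition of $\kappa(c)$, the inequality $\ell<\kappa(c)^{-1}$ collapses to $\Gamma(\tfrac{1-c}{2})^{2}<\sqrt{\pi}\,\Gamma(\tfrac12-c)$; applying the Legendre duplication formula to $\Gamma(\tfrac{1-c}{2})$ and to $\Gamma(\tfrac12-c)$ recasts this as $\Gamma(1-c)^{3}<\Gamma(1-\tfrac c2)^{2}\,\Gamma(1-2c)$. Taking logarithms, this is exactly Jensen's inequality for the map $s\mapsto\log\Gamma(1-s)$ — which is strictly convex, its second derivative being the trigamma function at $1-s>0$ — applied to the points $\tfrac c2$ and $2c$ with weights $\tfrac23$ and $\tfrac13$, and it is strict because $\tfrac c2\neq2c$.

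Granting $\ell<\kappa(c)^{-1}$, fix any $M_{1}\in(\ell,\kappa(c)^{-1})$. Since $w^{\nu(c)/2}\phi(w)\to\ell<M_{1}$, there is $w_{0}>0$ with $\phi(w)\le M_{1}w^{-\nu(c)/2}$ on $(0,w_{0}]$. On $[w_{0},\infty)$ the ratio $\phi$ is continuous and, by the large-argument asymptotic $K_{\zeta}(w)\sim\sqrt{\pi/(2w)}\,e^{-w}$ whose leading term is independent of $\zeta$, one has $\phi(w)\to1$; hence $B:=\sup_{w\ge w_{0}}\phi(w)<\infty$ and $\phi(w)\le B\le B\,w_{0}^{-\nu(c)/2}\,w^{\nu(c)/2}$ there. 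So the displayed inequality holds for all $w>0$ with $C_{2}=B\,w_{0}^{-\nu(c)/2}$, and substituting $w=\sqrt{2d\lambda_{0}\beta_{j}^{2}}$ proves the lemma. The main obstacle is the middle step: extracting $\ell$ requires a careful (if elementary) case analysis because of the sign changes in the Bessel indices and the $\zeta=0$ boundary, and one must then dispatch the Gamma-function inequality — the duplication-formula reduction followed by convexity of $\log\Gamma(1-\cdot)$ makes that clean, but recognizing the route is the real content; as the remark above indicates, several of the surrounding bounds can be reused from \citet{pal2014geometric}.
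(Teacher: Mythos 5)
Your proposal is correct, and it is a genuinely different route from the paper's, which gives no argument at all for this lemma and simply defers to pp.~613--616 and 618 of \citet{pal2014geometric}. I checked the key steps: the change of variables $w=\sqrt{2d\lambda_0\beta_j^2}$ does reduce the claim to $\phi(w)\le M_1w^{-\nu(c)/2}+C_2w^{\nu(c)/2}$ with $M_2=C_2(2d)^{\nu(c)/2}$; the case analysis at the origin is right, including the logarithmic cases ($\zeta=0$ occurs at $c=\tfrac12$ in the denominator index and at $\tfrac12<c\le\tfrac34$ in the numerator index, both giving $\ell=0$), and for $c\in(0,\tfrac12)$ the exponents satisfy $\tfrac{1-c}{2}-(\tfrac12-c)=\tfrac c2=\tfrac{\nu(c)}{2}$ so that $\ell=2^{c/2}\Gamma(\tfrac{1-c}{2})/\Gamma(\tfrac12-c)$ as you state. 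The condition $\ell\kappa(c)<1$ does collapse to $\Gamma(\tfrac{1-c}{2})^2<\sqrt{\pi}\,\Gamma(\tfrac12-c)$ (the powers of $2$ cancel against the $\sqrt{2\pi}$ in $\kappa$), the duplication formula with $z=\tfrac{1-c}{2}$ and $z=\tfrac12-c$ turns this into $\Gamma(1-c)^3<\Gamma(1-\tfrac c2)^2\Gamma(1-2c)$, and Jensen applied to the strictly convex $s\mapsto\log\Gamma(1-s)$ at $\tfrac c2$ and $2c$ with weights $\tfrac23,\tfrac13$ (whose average is exactly $c$, with $1-2c>0$ on the relevant range) finishes it. The gluing at $w_0$ and the boundedness of $\phi$ on $[w_0,\infty)$ via $\phi(w)\to1$ are unobjectionable. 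What your approach buys is a fully self-contained, checkable proof of the one genuinely delicate quantitative fact in the lemma --- that the leading constant at the origin is \emph{strictly} smaller than $\kappa(c)^{-1}$, which is what makes the coefficient $1-r(1-\kappa(c)M_1)$ in the drift bound strictly less than $1$ --- whereas the paper leaves the reader to reconstruct this from the cited reference. It is worth remarking that your Gamma inequality degenerates to equality as $c\to0^+$, so the margin $\kappa(c)^{-1}-\ell$ vanishes in that limit; this is harmless here since $c$ is a fixed hyperparameter, but it shows the constant $M_1$ cannot be chosen uniformly over $c$ near $0$.
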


Applying Lemma~\ref{lemma:tau_exp_lem}, Lemma~\ref{lemma:frac_tau_lem}
and (\ref{eq:term1_eq2}), we have
\begin{align*}
\Ex\left[\Ex\left[h(\tilde{\theta})\Big|\tau,\lambda\right]\bigg|\theta,\lambda\right] &\leq \alpha_{1}\rank(X)\lambda_{0}^{-1} + \alpha_{1}\tr(ZZ^{T})\sum_{i=1}^{m}\lambda_{i}^{-1}  \\
&\quad  + \alpha_{2}\lambda_{0}^{-1}\sum_{j=1}^{p}\left(\frac{4c+1}{4d} + \frac{\lambda_{0}\beta_{j}^{2}}{2}\right) + \alpha_{2}c^{*}\tr(ZZ^{T})\sum_{i=1}^{m}\lambda_{i}^{-1} \\
&\quad + \alpha_{2}c^{*}n^{2}\norm{y}^{2}s_{\max}^{2}\sum_{j=1}^{p}\left(\frac{c}{d}+\frac{\beta_{j}^{2}}{2C_{1}}+\frac{\lambda_{0}C_{1}}{4d}\right) \\
&\quad  + p\kappa(c)s_{\max}^{\nu(c)}\,\lambda_{0}^{\nu(c)/2} \\
&\quad + \kappa(c)\lambda_{0}^{\nu(c)/2}\sum_{j=1}^{p}\left[M_{1}\frac{1}{\lambda_{0}^{\nu(c)/2}\abs{\beta_{j}}^{\nu(c)}} + M_{2}\right]\\
&\quad  + \sum_{i=1}^{m}\delta_{i}q_{i}\lambda_{i}^{-1}\\
& \quad  + \max_{i}\{\delta_{i}\}C_{3}\sum_{j=1}^{p}\left(\frac{c}{d}+\frac{\beta_{j}^{2}}{2C_{2}}+\frac{\lambda_{0}C_{2}}{4d}\right)\\
& \quad  +  K_{0}(\alpha_{1},\alpha_{2},\delta),
\end{align*}
where $C_{1}$ and $C_{2}$ are positive constants and $C_{3} = q\,\tr[(Z^{T}Z)^{-1}]\norm{y}^{2}n^{3}s_{\max}^{2}$. Hence,
\begin{equation}
\label{eq:term1_eq3}
\begin{aligned}
\Ex\left[\Ex\left[h(\tilde{\theta})\Big|\tau,\lambda\right]\bigg|\theta,\lambda\right] &\leq \alpha_{1}\rank(X)\lambda_{0}^{-1} + \alpha_{1}\tr(ZZ^{T})\sum_{i=1}^{m}\lambda_{i}^{-1}  \\
&\quad + \alpha_{2}\frac{p(4c+1)}{4d}\lambda_{0}^{-1}+\alpha_{2}\frac{\norm{\beta}^{2}}{2} \\
&\quad+ \alpha_{2}c^{*}\tr(ZZ^{T})\sum_{i=1}^{m}\lambda_{i}^{-1} \\
&\quad+ \alpha_{2}c^{*}n^{2}\norm{y}^{2}s_{\max}^{2}\left(\frac{\norm{\beta}^{2}}{2C_{1}} + \frac{pC_{1}}{4d}\lambda_{0}\right) \\
&\quad + p\kappa(c)\left(s_{\max}^{\nu(c)}+M_{2}\right)\lambda_{0}^{\nu(c)/2} \\
&\quad + \kappa(c)M_{1}\sum_{j=1}^{p}\frac{1}{\abs{\beta_{j}}^{\nu(c)}}+ \sum_{i=1}^{m}\delta_{i}q_{i}\lambda_{i}^{-1} \\
&\quad + \max_{i}\{\delta_{i}\}C_{3}\left(\frac{\norm{\beta}^{2}}{2C_{2}} + \frac{p C_{2}}{4d}\lambda_{0} \right)\\
& \quad +  K'_{0}(\alpha_{1},\alpha_{2},\delta),
\end{aligned}
\end{equation}
where
\begin{align*}
K'_{0}(\alpha_{1},\alpha_{2},\delta) &:= K_{0}(\alpha_{1},\alpha_{2},\delta) + \frac{pc}{d}\left(\alpha_{2}c^{*}n^{2}\norm{y}^{2}s_{\max}^{2} +C_{3}\max_{i}\{\delta_{i}\}\right).
\end{align*}
Next, using Lemma~\ref{lemma:tau_exp_lem2} and (\ref{eq:term2_eq2}),
we have
\begin{equation}
\label{eq:term2_eq3}
\begin{aligned}
  \Ex\left[\Ex\left[g(\tilde{\lambda})\Big|\tau,\theta\right]\bigg|\theta,\lambda\right]  &\leq \frac{1}{n+p+2a_{0}-2}\norm{y-W\theta}^{2} \\
  &\hspace*{10mm}+ \frac{1}{n+p+2a_{0}-2}\left(d\norm{\beta}^{2} +\frac{3p}{2\lambda_{0}}\right) \\
  & \hspace*{10mm} +
  \sum_{i=1}^{m}\frac{\eta_{i}}{q_{i}+2a_{i}-2}\norm{u_{i}}^{2} +
  K_{1}(\alpha_{3},\alpha_{4},\eta).
\end{aligned}
\end{equation}

Substituting (\ref{eq:term1_eq3}) and (\ref{eq:term2_eq3}) into
(\ref{eq:main_drift_eq}) and rearranging gives
\begin{equation}
\label{eq:drift_ineq10}
\begin{aligned}
  \Ex[v(\tilde{\theta},\tilde{\lambda})&|\theta,\lambda] \leq \alpha_{1}(1-r)\left[1 + \frac{1}{\alpha_{1}(n+p+2a_{0}-2)}\right]\norm{y-W\theta}^{2}\\
  &\quad + \alpha_{2}\left[\frac{r}{2} + \frac{rc^{*}n^{2}\norm{y}^{2}s^{2}_{\max}}{2C_{1}} + \frac{r\max_{i}\{\delta_{i}\}C_{3}}{2\alpha_{2}C_{2}}\right. \\
  &\hspace*{30mm}+ \left. (1-r) + \frac{(1-r)d}{\alpha_{2}(n+p+2a_{0}-2)}\right]\norm{\beta}^{2}\\
  & \quad + [1- r(1-\kappa(c)M_{1})]\sum_{j=1}^{p} \frac{1}{\abs{\beta_{j}}^{\nu(c)}} \\
  &\quad + \sum_{i=1}^{m}\delta_{i}(1-r)\left[1 + \frac{\eta_{i}}{\delta_{i}(q_{i}+2a_{i}-2)}\right]\norm{u_{i}}^{2}\\
  &\quad + \alpha_{3}\left[\frac{r\alpha_{2}c^{*}n^{2}\norm{y}^{2}s_{\max}^{2}pC_{1}}{4\alpha_{3}d} + \frac{r\max_{i}\{\delta_{i}\}pC_{2}C_{3}}{4\alpha_{3}d}+r\right]\lambda_{0} \\
  &\quad + \alpha_{4}r\left[1+\frac{p\kappa(c)\left(s_{\max}^{\nu(c)}+M_{2}\right)}{\alpha_{4}}\right]\lambda_{0}^{\nu(c)/2}\\
  &\quad+ \left[r\alpha_{1}\rank(X) + \frac{r\alpha_{2}p(4c+1)}{4d} + r \right.\\
  &\hspace*{40mm} \left.  + \frac{(1-r)\,3p}{2(n+p+2a_{0}-2)} \right]\lambda_{0}^{-1}\\
  &\quad + r\sum_{i=1}^{m}\lambda_{i} \\
  &\quad + \sum_{i=1}^{m}\eta_{i}\left[\frac{r\alpha_{1}\tr(ZZ^{T})}{\eta_{i}} + \frac{r\alpha_{2}c^{*}\tr(ZZ^{T})}{\eta_{i}}+\frac{r\delta_{i} q_{i}}{\eta_{i}} + r \right]\lambda_{i}^{-1}\\
  &\quad +
  L(\alpha_{1},\alpha_{2},\alpha_{3},\alpha_{4},\delta,\eta,r) \;,
\end{aligned}
\end{equation}
where
\[
L(\alpha_{1},\alpha_{2},\alpha_{3},\alpha_{4},\delta,\eta,r) :=
rK'_{0}(\alpha_{1},\alpha_{2},\delta) +
(1-r)K_{1}(\alpha_{3},\alpha_{4},\eta) \;.
\]

\subsection{The final step}
\label{sec:final}

Fix $r \in (0,1)$ and note that (aside from $L$) the terms of
\eqref{eq:drift_ineq10} agree with the terms of \eqref{eq:drift_fun},
except that each term in \eqref{eq:drift_ineq10} has an extra
\textit{constant} factor (coefficient).  Therefore, we can establish
that \eqref{eq:driftcond} holds by demonstrating the existence of
$\delta, \eta \in \mathbb{R}_{+}^{m}$ and $\alpha_{1}$, $\alpha_{2}$,
$\alpha_{3}$, $\alpha_{4}$, $C_{1}$, $C_{2} \in \mathbb{R}_+$ such that all
of these coefficients are simultaneously less than 1.  Moreover, if
the chain is geometrically ergodic for at least one $r \in (0,1)$ then
it is geometrically ergodic for all $r \in (0,1)$
\citep{jung:2015,jones2014convergence}.  Thus, we can treat $r$ as
another free parameter.  (A similar analysis was performed in
\citet{johnson2015geometric}.)

We begin by noting that two of the coefficients are always less than
1.  Indeed, the coefficient of $\sum_{i=1}^{m}\lambda_{i}$ is just
$r$, and the coefficient of $\sum_{j=1}^{p}
\frac{1}{\abs{\beta_{j}}^{\nu(c)}}$ is $[1- r(1-\kappa(c)M_{1})]$,
which is less than 1 since, by Lemma~\ref{lemma:frac_tau_lem}, $0<
\kappa(c)M_{1} < 1$.  Therefore, it suffices to show that we can
identify $\delta, \eta \in \mathbb{R}_{+}^{m}$ and $\alpha_{1}$,
$\alpha_{2}$, $\alpha_{3}$, $\alpha_{4}$, $C_{1}$, $C_{2} \in \mathbb{R}_+$
such that the following seven inequalities all hold simultaneously:

\begin{equation}
  \label{eq:rho_ineq1}
  \rho_{1}(\alpha_{1},r):= (1-r) \left[1 +
    \frac{1}{\alpha_{1}(n+p+2a_{0}-2)}\right] < 1 \;,
\end{equation}

\begin{equation}
\label{eq:rho_ineq2}
\begin{aligned}
  \rho_{2}(\alpha_{2},\delta,C_{1},C_{2},r)&:=\frac{r}{2} + \frac{rc^{*}n^{2}\norm{y}^{2}s^{2}_{\max}}{2C_{1}} + \frac{r\max_{i}\{\delta_{i}\}C_{3}}{2\alpha_{2}C_{2}}\\
  &\hspace*{20mm}+(1-r) + \frac{(1-r)d}{\alpha_{2}(n+p+2a_{0}-2)} < 1 \;,
\end{aligned}
\end{equation}

\begin{equation}
\label{eq:rho_ineq3}
\rho_{3i}(\delta_{i},\eta_{i},r):=(1-r)\left[1 + \frac{\eta_{i}}{\delta_{i}(q_{i}+2a_{i}-2)}\right] < 1 \;, \quad \text{for} \ i=1,\ldots,m \;,
\end{equation}

\begin{equation}
\label{eq:rho_ineq4}
\begin{aligned}
\rho_{4}(\alpha_{2},\alpha_{3},\delta,C_{1},C_{2},r) &:=\frac{r\alpha_{2}c^{*}n^{2}\norm{y}^{2}s_{\max}^{2}pC_{1}}{4\alpha_{3}d}\\
&\hspace*{20mm} + \frac{r\max_{i}\{\delta_{i}\}pC_{2}C_{3}}{4\alpha_{3}d}+r  < 1 \;,
\end{aligned}
\end{equation}

\begin{equation}
\label{eq:rho_ineq5}
\rho_{5}(\alpha_{4},r):= r\left[1+\frac{p\kappa(c)\left(s_{\max}^{\nu(c)}+M_{2}\right)}{\alpha_{4}}\right] < 1 \;,
\end{equation}

\begin{equation}
\label{eq:rho_ineq6}
\begin{aligned}
\rho_{6}(\alpha_{1},\alpha_{2},r)&:=r\alpha_{1}\rank(X) + \frac{r\alpha_{2}p(4c+1)}{4d}  \\
 &\hspace*{30mm} + r + \frac{(1-r)\,3p}{2(n+p+2a_{0}-2)}  < 1 \;,
 \end{aligned}
\end{equation}
and

\begin{equation}
\label{eq:rho_ineq7}
\begin{aligned}
\rho_{7i}(\alpha_{1},\alpha_{2},\delta_{i},\eta_{i},r)&:=\frac{r\alpha_{1}\tr(ZZ^{T})}{\eta_{i}} + \frac{r\alpha_{2}c^{*}\tr(ZZ^{T})}{\eta_{i}}\\
&\hspace*{20mm}+ \frac{r\delta_{i}q_{i}}{\eta_{i}} + r  < 1 \;, \quad \text{for} \ i=1,\ldots,m \;.
\end{aligned}
\end{equation}

We now derive a solution.  Solving (\ref{eq:rho_ineq1}) for
$\alpha_{1}$ gives
\begin{equation*}
  \alpha_{1} >\frac{(1-r)}{r(n+p+2a_{0}-2)} \;.
\end{equation*}
Next define
\begin{equation}
\label{eq:alpha_1_2}
\alpha_{1}^{*} = \frac{1}{r(n+p+2a_{0}-2)} \;,
\end{equation}
so that $\rho_{1}(\alpha_{1}^{*},r)<1$ for all $0<r<1$.  Next, let
\begin{equation}
\label{eq:alpha_2}
\alpha_{2}^{*} := \frac{2d}{r(n+p+2a_{0}-2)} \;.
\end{equation}
Substituting into (\ref{eq:rho_ineq2}) gives
\begin{equation}
\label{eq:alpha_2_1}
\begin{aligned}
  \rho_{2}(\alpha_{2}^{*},\delta,C_{1},C_{2},r) &= \frac{r}{2}  +  \frac{rc^{*}n^{2}\norm{y}^{2}s^{2}_{\max}}{2C_{1}} +\frac{r\max_{i}\{\delta_{i}\}C_{3}}{2\alpha_{2}^{*}C_{2}}+(1-r) + \frac{(1-r)d}{\alpha_{2}^{*}(n+p+2a_{0}-2)}\\
  &= \frac{r}{2} + \frac{rc^{*}n^{2}\norm{y}^{2}s^{2}_{\max}}{2C_{1}} +\frac{r^{2}\max_{i}\{\delta_{i}\}C_{3}(n+p+2a_{0}-2)}{4d C_{2}}\\
  &\quad\quad+(1-r) + (1-r)\frac{r}{2}\\
  &= \frac{rc^{*}n^{2}\norm{y}^{2}s^{2}_{\max}}{2C_{1}} +\frac{r^{2}\max_{i}\{\delta_{i}\}C_{3}(n+p+2a_{0}-2)}{4d C_{2}}+1-\frac{r^{2}}{2} \;.\\
\end{aligned}
\end{equation}
Thus, choosing
\begin{equation*}
  C_{1}^{*} > \frac{2c^{*}n^{2}\norm{y}^{2}s^{2}_{\max}}{r} \quad \text{and} \quad C_{2}^{*} > \frac{\max_{i}\{\delta_{i}\}C_{3}(n+p+2a_{0}-2)}{d} \;,
\end{equation*}
we get
\begin{equation}
\label{eq:alpha_2_2}
\rho_{2}(\alpha_{2}^{*},\delta,C_{1}^{*},C_{2}^{*},r) <\frac{r^{2}}{4} + \frac{r^{2}}{4} + 1- \frac{r^{2}}{2}  = 1 \;,
\end{equation}
for all $\delta\in\mathbb{R}_{+}^{m}$ and $0<r<1$.

Next, using (\ref{eq:rho_ineq6}), (\ref{eq:alpha_1_2}) and  (\ref{eq:alpha_2}), we get
\begin{equation}
\label{eq:rho5_eq1}
\begin{aligned}
  \rho_{6}(\alpha_{1}^{*},\alpha_{2}^{*},r) &=r\alpha_{1}^{*}\,\rank(X) + \frac{r\alpha_{2}^{*}\,p(4c+1)}{4d} + r + \frac{(1-r)3p}{2(n+p+2a_{0}-2)}\\
  &= \frac{\rank(X)}{n+p+2a_{0}-2} + \frac{p(4c+1)}{2(n+p+2a_{0}-2)} + r + \frac{(1-r)3p}{2(n+p+2a_{0}-2)} \\
  &< \frac{\rank(X)+ 2p(c+1)}{n+p+2a_{0}-2} + r \;,
\end{aligned}
\end{equation}
and from condition (2) of Proposition 1,
\[
0 < \frac{\rank(X)+ 2p(c+1)}{n+p+2a_{0}-2} < \frac{\rank(X)+
  2p(c+1)}{n+p+2\left(\frac{\rank(X)-n+(2c+1)p)+2}{2}\right)-2}=1
\;.
\]
Thus, for
\begin{equation}
  r < 1 - \frac{\rank(X)+ 2p(c+1)}{n+p+2a_{0}-2} \;,
\end{equation}
$\rho_{6}(\alpha_{1}^{*},\alpha_{2}^{*},r) < 1$.

Next, solving (\ref{eq:rho_ineq3}) for $\delta_{i}$ gives
\begin{equation}
  \delta_{i} > \frac{(1-r)\eta_{i}}{r(q_{i}+2a_{i} - 2)} \quad \text{for} \ i=1,\ldots,m \;.
\end{equation}
Hence, by defining
\begin{equation}
\label{eq:delta_i_ineq}
\delta_{i}^{*} = \frac{\eta_{i}}{r(q_{i}+2a_{i}-1)}, \quad  i=1,\ldots,m \;,
\end{equation}
it follows that $\rho_{3i}(\delta_{i}^{*},\eta_{i},r) < 1$ for all
$\eta_{i}>0$ and $r\in(0,1)$, $i=1,\ldots,m$.

Using equations (\ref{eq:rho_ineq7}), (\ref{eq:alpha_1_2}), (\ref{eq:alpha_2}) and (\ref{eq:delta_i_ineq}) we get
\begin{equation}
\label{eq:eta_ineq1}
\begin{aligned}
  \rho_{7i}(\alpha_{1}^{*},\alpha_{2}^{*},\delta_{i}^{*},\eta_{i},r) &= \frac{r\alpha_{1}^{*}\,\tr(ZZ^{T})}{\eta_{i}} + \frac{r\alpha_{2}^{*}\,c^{*}\tr(ZZ^{T})}{\eta_{i}}+ \frac{r\delta_{i}^{*}\,q_{i}}{\eta_{i}} + r\\
  &=\frac{\tr(ZZ^{T})}{\eta_{i}(n+p+2a_{0}-2)} + \frac{2d c^{*}\tr(ZZ^{T})}{\eta_{i}(n+p+2a_{0}-2)} + \frac{q_{i}}{q_{i} + 2a_{i}-2} + r \;,\\
\end{aligned}
\end{equation}
for $i=1,\ldots,m$, and from condition (3) of Proposition 1,
\begin{equation*}
  1-\frac{q_{i}}{q_{i} + 2a_{i}-2} = \frac{2a_{i}-2}{q_{i} + 2a_{i}-2} >0 \;.
\end{equation*}
Thus, for
\begin{equation}
\label{eq:eta_i_ineq1}
\eta_{i}^{*} > \frac{q_{i} + 2a_{i}-2}{2a_{i}-2}\left[\frac{\tr(ZZ^{T})}{(n+p+2a_{0}-2)} + \frac{2d c^{*}\tr(ZZ^{T})}{(n+p+2a_{0}-2)}\right] \;,
\end{equation}
it follows that
\begin{equation*}
  0 < \frac{\tr(ZZ^{T})}{\eta_{i}^{*}\,(n+p+2a_{0}-2)} + \frac{2d c^{*}\tr(ZZ^{T})}{\eta_{i}^{*}\,(n+p+2a_{0}-2)} + \frac{q_{i}}{q_{i} + 2a_{i}-2}< 1 \;,
\end{equation*}
for $i=1,\ldots,m$.  Hence,
$\rho_{7i}(\alpha_{1}^{*},\alpha_{2}^{*},\delta_{i}^{*},\eta_{i}^{*},r)
<1$ when
\begin{equation*}
  r < 1 - \frac{\tr(ZZ^{T})}{\eta_{i}^{*}(n+p+2a_{0}-2)} + \frac{2d c^{*}\tr(ZZ^{T})}{\eta_{i}^{*}(n+p+2a_{0}-2)} + \frac{q_{i}}{q_{i} + 2a_{i}-2} \;, \quad i=1,\ldots,m \;.
\end{equation*}

Next, solving (\ref{eq:rho_ineq4}) for $\alpha_{3}$ and (\ref{eq:rho_ineq5}) for $\alpha_{4}$ gives
\begin{equation}
\label{eq:alpha_4}
\alpha_{3} > \frac{1}{1-r}\left[\frac{r\alpha_{2}c^{*}n^{2}\norm{y}^{2}s_{\max}^{2}pC_{1}}{4d} + \frac{r\max_{i}\{\delta_{i}\}pC_{2}C_{3}}{4d}\right],
\end{equation}
and
\begin{equation}
\label{eq:alpha_5}
\alpha_{4} > \frac{r}{1-r}\left[p\kappa(c)\left(s_{\max}^{\nu(c)}+M_{2}\right)\right],
\end{equation}
respectively. Let $\alpha_{3}^{*}$ satisfy (\ref{eq:alpha_4}), then $\rho_{4}(\alpha_{2},\alpha_{3}^{*},\delta,C_{1},C_{2},r)<1$ for all $\alpha_{2},C_{1},C_{2}>0$, $\delta\in\mathbb{R}_{+}^{p}$ and $r\in(0,1)$. Now, let $\alpha_{4}^{*}$ satisfy (\ref{eq:alpha_5}), then $\rho_{5}(\alpha_{4}^{*},r) < 1$ for all $r\in(0,1).$

Lastly, choose $r^{*}$ such that
\begin{align*}
  r^{*} &< 1 - \max \left\{\frac{\rank(X)+ 2p(c+1)}{n+p+2a_{0}-2},\right.\\
  & \quad\quad\quad \left.\max_{1\leq i \leq
      m}\left\{\frac{\tr(ZZ^{T})}{\eta_{i}^{*}(n+p+2a_{0}-2)} +
      \frac{2d c^{*}\tr(ZZ^{T})}{\eta_{i}^{*}(n+p+2a_{0}-2)} +
      \frac{q_{i}}{q_{i} + 2a_{i}-2}\right\}\right\}.
\end{align*}
Let $\delta^{*}:=(\delta_{1}^{*} \ \delta_{2}^{*} \ \ldots \
\delta_{m}^{*})^{T}$ and $\eta^{*}:=(\eta_{1}^{*} \ \eta_{2}^{*} \
\ldots \ \eta_{m}^{*})^{T}$. The inequalities (\ref{eq:rho_ineq1}) -
(\ref{eq:rho_ineq7}) are then satisfied for $\delta^{*}$, $\eta^{*}$,
$\alpha_{1}^{*}$, $\alpha_{2}^{*}$, $\alpha_{3}^{*}$,
$\alpha_{4}^{*}$, $C_{1}^{*}$, $C_{2}^{*}$ and $r^{*}$.  Therefore,
\begin{equation}
\label{eq:drift_ineq11}
\begin{aligned}
  \Ex[v(\tilde{\theta},\tilde{\lambda})|\theta,\lambda] &\leq \rho_{1}(\alpha_{1}^{*},r^{*})\,\alpha_{1}^{*}\norm{y-W\theta}^{2}\\
  &\quad + \rho_{2}(\alpha_{2}^{*},\delta^{*},C_{1}^{*},C_{2}^{*},r^{*})\,\alpha_{2}^{*}\norm{\beta}^{2}\\
  &\quad + [1- r^{*}(1-\kappa(c)M_{1})]\sum_{j=1}^{p}\frac{1}{\abs{\beta_{j}}^{\nu(c)}} \\
  &\quad + \sum_{i=1}^{r}\rho_{3i}(\delta_{i}^{*},\eta_{i}^{*},r^{*})\,\delta_{i}^{*}\norm{u_{i}}^{2}\\
  &\quad + \rho_{4}(\alpha_{2}^{*},\alpha_{3}^{*},\delta^{*},C_{1}^{*},C_{2}^{*},r^{*})\,\alpha_{3}^{*}\lambda_{0} \\
  &\quad + \rho_{5}(\alpha_{4}^{*},r^{*})\,\alpha_{4}^{*}\lambda_{0}^{\nu(c)/2}\\
  &\quad + \rho_{6}(\alpha_{1}^{*},\alpha_{2}^{*},r^{*})\lambda_{0}^{-1}\\
  &\quad + r^{*}\sum_{i=1}^{r}\lambda_{i} + \sum_{i=1}^{r}\rho_{7i}(\alpha_{1}^{*},\alpha_{2}^{*},\delta_{i}^{*},\eta_{i}^{*})\,\eta_{i}^{*}\lambda_{i}^{-1}\\
  &\quad +
  L(\alpha_{1}^{*},\alpha_{2}^{*},\alpha_{3}^{*},\alpha_{4}^{*},\delta^{*},\eta^{*},r^{*})
  \;.
\end{aligned}
\end{equation}
To formally complete the argument, let $\rho^{*}$ denote the maximum
of all of the coefficients.  Then,
\begin{align*}
  \Ex[v(\tilde{\theta},\tilde{\lambda})|\theta,\lambda] &\leq \rho^{*}\left(\alpha_{1}^{*}\norm{y-W\theta}^{2} + \alpha_{2}^{*}\norm{\beta}^{2} + \sum_{j=1}^{p}\frac{1}{\abs{\beta_{j}}^{\nu(c)}} + \sum_{i=1}^{r}\delta_{i}^{*}\norm{u_{i}}^{2}\right. \\
  &\quad\quad\quad\quad\quad\quad \left. + \alpha_{3}^{*}\lambda_{0} + \alpha_{4}^{*}\lambda_{0}^{\nu(c)/2}+ \lambda_{0}^{-1} + \sum_{i=1}^{r}\lambda_{i} + \sum_{i=1}^{r}\eta_{i}^{*}\lambda_{i}^{-1}\right)\\
  &\quad\quad\quad+ L(\alpha_{1}^{*},\alpha_{2}^{*},\alpha_{3}^{*},\alpha_{4}^{*},\delta^{*},\eta^{*},r^{*})\\
  &= \rho^{*}v(\theta,\lambda) +
  L(\alpha_{1}^{*},\alpha_{2}^{*},\alpha_{3}^{*},\delta^{*},\eta^{*},r^{*})
  \;,
\end{align*}
where $\rho^{*} < 1$ and
$L(\alpha_{1}^{*},\alpha_{2}^{*},\alpha_{3}^{*},\delta^{*},\eta^{*},r^{*})
< \infty$.  Therefore, the chain is geometrically ergodic for
$r=r^{*}$, which implies that it is geometrically ergodic for all
$r\in (0,1)$. This proves Proposition 1.

\nocite{abra:2016, segura2011bounds,mccu:sear:neuh:2008,abramowitz1966handbook,ismail1978monotonicity,khare2011spectral}

\vspace*{5mm}

\noindent {\bf \large Acknowledgment}.  The second author was
supported by NSF Grant DMS-15-11945.

\pagebreak
\noindent {\LARGE \bf Appendices}
\begin{appendix}

\numberwithin{equation}{section}
\vspace*{-3mm}

\section{Derivation of $\pi(\theta|\tau,\lambda)$}

From (\ref{eq:fullposterior}),
\begin{align*}
\pi(\theta|\tau,\lambda,y) &\propto \exp\left\{-\frac{\lambda_{0}}{2}(y-W\theta)^{T}(y-W\theta)\right\}\exp\left\{-\frac{\lambda_{0}}{2}\beta^{T}D_{\tau}^{-1}\beta \right\}\exp\left\{-\frac{1}{2}u^{T}\Lambda u\right\} \\
&\propto  \exp\left\{-\frac{\lambda_{0}}{2}(y-W\theta)^{T}(y-W\theta)\right\}\exp\left\{-\frac{1}{2}\theta^{T}C\theta\right\},
\end{align*}
where
\begin{equation*}
C = \left[\begin{array}{cc}
\lambda_{0}D_{\tau}^{-1} & 0 \\
0 & \Lambda
\end{array}\right].
\end{equation*}
Thus,
\begin{align*}
  \pi(\theta|\tau,\lambda,y) &\propto \exp\left\{-\frac{1}{2}\left[\theta^{T}(\lambda_{0}W^{T}W + C)\theta -2\theta^{T}(\lambda_{0}W^{T}y)\right]\right\}\\
  &\propto \exp\left\{-\frac{1}{2}\left[\theta^{T}(\lambda_{0}W^{T}W +
      C)\theta -2\theta^{T}(\lambda_{0}W^{T}W + C)(\lambda_{0}W^{T}W +
      C)^{-1}(\lambda_{0}W^{T}y)\right]\right\} \;.
\end{align*}
Therefore, conditional on $\tau$, $\lambda$ and $y$, $\theta$ is
multivariate normal with mean $(\lambda_{0}W^{T}W +
C)^{-1}(\lambda_{0}W^{T}y)$ and covariance matrix $(\lambda_{0}W^{T}W
+ C)^{-1}$.  It is now left for us to compute these two values. From
the definition of $W$ and $C$,
\begin{equation*}
(\lambda_{0}W^{T}W + C)^{-1} = \left[\begin{array}{cc}
\lambda_{0}X^{T}X + \lambda_{0}D_{\tau}^{-1} & \lambda_{0}X^{T}Z\\
\lambda_{0}Z^{T}X & \lambda_{0}Z^{T}Z + \Lambda
\end{array}\right]^{-1} \;.
\end{equation*}
We will make use of the following inverse formula for block matrices
\begin{equation}
\label{eq:inverse_eq}
\left[\begin{array}{cc}
A & B \\
C & D
\end{array}\right]^{-1} = \left[\begin{array}{cc}
  A^{-1} + A^{-1}B(D-CA^{-1}B)^{-1}CA^{-1} & -A^{-1}B(D-CA^{-1}B)^{-1} \\
  -(D-CA^{-1}B)^{-1}CA^{-1} & (D - CA^{-1}B)^{-1}
\end{array}\right] \;.
\end{equation}
Let $\tsl = \lambda_{0}(X^{T}X + D_{\tau}^{-1})$, $\msl = I
-\lambda_{0}XT^{-1}_{\lambda,\tau}X^{T}$, and $\qsl=
\lambda_{0}Z^{T}M_{\lambda,\tau}Z + \Lambda$. Then,
\begin{equation*}
(\lambda_{0}W^{T}W + C)^{-1} = \left[\begin{array}{cc}
\tsl & \lambda_{0}X^{T}Z \\
\lambda_{0}Z^{T}X & \lambda_{0}Z^{T}Z + \Lambda
\end{array}\right]^{-1} := \left[\begin{array}{cc}
\Omega_{11} & \Omega_{12} \\
\Omega_{21} & \Omega_{22}
\end{array}\right] := \Omega \;.
\end{equation*}
From (\ref{eq:inverse_eq}),
\begin{align*}
  \Omega_{22} &=\left[\lambda_{0}Z^{T}Z + \Lambda - (\lambda_{0}Z^{T}X)\tsl^{-1}(\lambda_{0}X^{T}Z)\right]^{-1} \\
  & = \left[\lambda_{0} Z^{T}(I-\lambda_{0}X\tsli X^{T})Z + \Lambda \right] \\
  & = \left[\lambda_{0}Z^{T}\msl Z + \Lambda\right]^{-1} \\
  & = \qsli \;.
\end{align*}
Next,
\begin{align*}
  \Omega_{11} &= \tsl^{-1} + \tsl^{-1}(\lambda_{0}X^{T}Z)[\lambda_{0}Z^{T}Z + \Lambda - (\lambda_{0}Z^{T}X)\tsl^{-1}(\lambda_{0}X^{T}Z)]^{-1}(\lambda_{0}Z^{T}X)\tsl^{-1},\\
  &= \tsl^{-1} + \lambda_{0}^{2}\tsl^{-1}X^{T}Z\qsli Z^{T}X\tsl^{-1} \;. \\
\end{align*}
Similarly,
\begin{equation*}
  \Omega_{12} = -\tsli(\lambda_{0}X^{T}Z)[\lambda_{0}Z^{T}Z + \Lambda - (\lambda_{0}Z^{T}X)\tsl^{-1}(\lambda_{0}X^{T}Z)]^{-1} = -\lambda_{0}\tsl^{-1}X^{T}Z\qsl^{-1} \;,
\end{equation*}
and
\begin{equation*}
  \Omega_{21} = -\lambda_{0}\qsl^{-1}Z^{T}X\tsl^{-1} \;.
\end{equation*}
Hence,
\begin{equation}
  \Var[\theta|\tau,\lambda,y] = \left[\begin{array}{cc}
      \tsl^{-1} + \lambda_{0}^{2}\tsl^{-1}X^{T}Z\qsl^{-1}Z^{T}X\tsl^{-1} & -\lambda_{0}\tsl^{-1}X^{T}Z\qsl^{-1}\\
      -\lambda_{0}\qsl^{-1}Z^{T}X\tsl^{-1} & \qsl^{-1}
    \end{array}\right] .
\end{equation}

Now we just need to compute the conditional mean of $\theta$ given $\tau$, $\lambda$ and $y$.  Notice that
\begin{equation*}
\Ex[\theta|\tau,\lambda,y] = \Omega(\lambda_{0}W^{T}y) = \left[\begin{array}{cc}
\Omega_{11} & \Omega_{12}\\
\Omega_{21} & \Omega_{22}
\end{array}\right]\,\left[\begin{array}{c}
  \lambda_{0}X^{T}y \\
  \lambda_{0}Z^{T}y
\end{array}\right],
\end{equation*}
and
\begin{align*}
  \Omega_{11}(\lambda_{0}X^{T}y) + \Omega_{12}(\lambda_{0}Z^{T}y) &= \lambda_{0}(\tsli +\lambda_{0}^{2}\tsli X^{T}Z\qsli Z^{T}X\tsli)X^{T}y -\lambda_{0}(\lambda_{0}\tsli X^{T}Z\qsli )Z^{T}y\\
  &= \lambda_{0}\tsli X^{T}y + \lambda_{0}^{3}\tsli X^{T}Z\qsli Z^{T}X\tsli X^{T}y - \lambda_{0}^{2}\tsli X^{T}Z\qsli Z^{T}y \\
  &= \lambda_{0}\tsli X^{T}y - \lambda_{0}^{2}\tsli X^{T}Z\qsli Z^{T}(I - \lambda_{0}X\tsli X^{T})y \\
  &= \lambda_{0}\tsli X^{T}y - \lambda_{0}^{2}\tsli X^{T}Z\qsli
  Z^{T}\msl y \;.
\end{align*}
Additionally, we have
\begin{align*}
  \Omega_{21}(\lambda_{0}X^{T}y) + \Omega_{22}(\lambda_{0}Z^{T}y) &= \lambda_{0}(-\lambda_{0}\qsli Z^{T}X\tsli)X^{T}y +\lambda_{0}\qsli Z^{T}y\\
  &= \lambda_{0}\qsli Z^{T}(I - \lambda_{0}X\tsli X^{T})y\\
  &= \lambda_{0}\qsli Z^{T}\msl y \;.
\end{align*}
Hence
\begin{equation}
  \Ex[\theta|\tau,\lambda,y] = \left[\begin{array}{c}
      \lambda_{0} \tsli X^{T}y - \lambda_{0}^{2}\tsli X^{T}Z\qsli Z^{T}\msl y\\
      \lambda_{0}\qsli Z^{T}\msl y
\end{array}\right].
\end{equation}

Thus, the full conditional distribution of $\theta$ is multivariate normal with mean and covariance matrix given by (\ref{eq:theta_cond_mean}) and (\ref{eq:theta_cond_var}), respectively.

\section{Proof that the Markov chain
  $\{(\theta_{k},\lambda_{k})\}_{k=0}^{\infty}$ is Feller}
To prove that the Markov chain generated by the hybrid sampler is a
Feller chain, we must show that for each open set $O$, $P(\cdot,O)$ is
a lower semi-continuous function on $\X$.  Let
$(\theta_{l},\lambda_{l})$ be a sequence in $\X$ converging to
$(\theta,\lambda)\in \X$. Then,
\begin{align*}
  \liminf_{l\rightarrow\infty} P((\theta_{l},\lambda_{l}),O) & \ge \liminf_{l \rightarrow\infty} \,r\int_{\mathbb{R}^{p+q}}I_{O}(\tilde{\theta},\lambda_{l})\left[\int_{\mathbb{R}^{p}_{+}}\pi(\tilde{\theta}|\tau,\lambda_{l})\pi(\tau|\theta_{l},\lambda_{l})d\tau\right]\, d\tilde{\theta} \\
  & \quad \quad + \liminf_{l\rightarrow\infty} \,(1-r) \int_{\mathbb{R}^{m}_{+}} I_{O}(\theta_{l},\tilde{\lambda})\left[\int_{\mathbb{R}^{p}_{+}}\pi(\tilde{\lambda}|\theta_{l},\tau,)\pi(\tau|\theta_{l},\lambda_{l})\, d\tau\right]\,d\tilde{\lambda} \\
  & \geq  r\int_{\mathbb{R}^{p+q}}I_{O}(\tilde{\theta},\lambda)\left[\int_{\mathbb{R}^{p}_{+}}\liminf_{l\rightarrow\infty}\, \pi(\tilde{\theta}|\tau,\lambda_{l})\pi(\tau|\theta_{l},\lambda_{l})d\tau\right]\, d\tilde{\theta}\\
  & \quad \quad + (1-r)\int_{\mathbb{R}^{m}_{+}}I_{O}(\theta,\tilde{\lambda})\left[\int_{\mathbb{R}^{p}_{+}}\liminf_{l\rightarrow\infty}\,\pi(\tilde{\lambda}|\theta_{l},\tau)\pi(\tau|\theta_{l},\lambda_{l})\, d\tau\right]\,d\tilde{\lambda} \\
  & =  r\int_{\mathbb{R}^{p+q}}I_{O}(\tilde{\theta},\lambda)\left[\int_{\mathbb{R}^{p}_{+}} \pi(\tilde{\theta}|\tau,\lambda) \pi(\tau|\theta,\lambda) \, d\tau\right]\, d\tilde{\theta} \\
  & \quad \quad +
  (1-r)\int_{\mathbb{R}^{m}_{+}}I_{O}(\theta,\tilde{\lambda})\left[\int_{\mathbb{R}^{p}_{+}}
    \pi(\tilde{\lambda}|\theta,\tau)\pi(\tau|\theta,\lambda) \,
    d\tau\right]\,d\tilde{\lambda} \\ & = P((\theta,\lambda),O) \;,
\end{align*}
where the penultimate equality follows from the fact that all three
conditional densities are continuous in the conditioning variables
\citep{abra:2016}.

\section{Proof that $v(\theta,\lambda)$ is unbounded off compact
  sets}

Recall that the drift function $v(\theta,\lambda)$ is given by
\begin{equation*}
\begin{aligned}
  v(\theta,\lambda) & = \alpha_{1} \norm{y-W\theta}^{2} + \alpha_{2}
  \norm{\beta}^{2}+ \sum_{j=1}^{p} \frac{1}{\abs{\beta_{j}}^{\nu(c)}}
  + \sum_{i=1}^{m} \delta_{i} \norm{u_{i}}^{2} \\
  & \hspace*{35mm} + \alpha_{3} \lambda_{0} + \alpha_{4}
  \lambda_{0}^{\nu(c)/2} + \lambda_{0}^{-1} + \sum_{i=1}^{m}
  \lambda_{i} + \sum_{i=1}^{m}\eta_{i}\lambda_{i}^{-1} \;.
\end{aligned}
\end{equation*}
We need to show that this function is unbounded off compact sets; that
is, we must demonstrate that, for every $d \in \mathbb{R}$, the set
\[
S_{d} := \{(\theta,\lambda)\in \mathsf{X}: v(\theta,\lambda) \leq d\}
\;,
\]
is compact.  Let $d$ be such that $S_d$ is nonempty (otherwise $S_d$
is trivially compact).  Since $v(\theta,\lambda)$ is continuous on
$\X$, $S_{d}$ is a closed set.  Now define
\begin{align*}
  A_{j} &= \left\{\beta_{j}\in\mathbb{R} \setminus \{0\}:\alpha_{2}\beta_{j}^{2}+\frac{1}{\abs{\beta_{j}}^{\nu(c)}}\leq d\right\}, \quad j=1,\ldots,p \;,\\
  B_{i} &= \{u_{i}\in\mathbb{R}: \delta_{i}u_{i}^{2} \leq d\} \;, \quad i=1,\ldots,m \;,\\
  C_{0} &= \{\lambda_{0}\in \mathbb{R}_{+}: \alpha_{3}\lambda_{0} + \lambda_{0}^{-1}+\alpha_{4}\lambda_{0}^{\nu(c)/2}\leq d\} \;, \\
  C_{i} &=
  \{\lambda_{i}\in\mathbb{R}_{+}:\lambda_{i}+\eta_{i}\lambda_{i}^{-1}\leq
  d\}, \quad i=1,\ldots,m \;.
\end{align*}
All of the above sets are closed and bounded, and thus the set
\begin{equation*}
  T_{d} := \bigtimes_{j=1}^{p}A_{i}\times\bigtimes_{j=1}^{m}B_{j}\times\bigtimes_{i=0}^{m}C_{i} \;,
\end{equation*}
is a compact set in $\X$.  Since, $S_{d}$ is a closed set and $S_{d}\subseteq T_{d}$, it follows that $S_{d}$ is a compact set in $\X$.

\section{Proofs of the lemmas}
\label{app:lemmas}

\subsection{Preliminary results}
\label{app:prelim}

We begin by introducing some notation and stating a few necessary
facts about non-negative definite matrices. Note that if $C$ is a
non-negative definite matrix then $\tr(C)$ is non-negative. If $A,B
\in \mathbb{R}^{n\times n}$ are symmetric matrices such that $B-A$ is
non-negative definite, we write $A \preceq B$.  Similarly, if $B-A$ is
positive definite, we write $A \prec B$. Additionally, if $A\preceq
B$, then $\tr(A) \leq \tr(B)$. Furthermore, if $A$ and $B$ are
positive definite matrices then $A \preceq B \Leftrightarrow B^{-1}
\preceq A^{-1}$. Lastly, for a matrix $D$ we let $\norm{D}$ represent
the Frobenius norm of the matrix $\norm{D} := \sqrt{\tr(D^{T}D)}$.

We will also require the singular value decomposition of several
matrices in our proofs, thus it will be helpful to establish some
common notation. For a matrix $A \in \mathbb{R}^{n\times m}$, let
$k_{A} = \rank(A) \leq \min\{n,m\}$ and denote the singular value
decomposition of $A$ by $U_{A} \Gamma_{A} V_{A}^{T}$, where $U_{A}$
and $V_{A}$ are orthogonal matrices of dimension $n$ and $m$,
respectively, and
\begin{equation*}
\Gamma_{A} := \left[\begin{array}{cc}
           \Gamma_{A}^{*} & 0_{k_{A},\,m-k_{A}}\\
           0_{n-k_{A},\,k_{A}} & 0_{n-k_{A},\,m-k_{A}}
           \end{array}
     \right] \;,
\end{equation*}
where $\Gamma_{A}^{*} := \text{diag}\{\gamma_{A\,1},\ldots,\gamma_{A\,k}\}$. The values
$\gamma_{A\,1},\ldots,\gamma_{A\,k}$ are the singular values of $A$, which are
strictly positive. We denote $\gamma_{A\max}$ as the largest singular value of $A$. Lastly, in an abuse of notation, $\gamma_{A\, i}^{2} := 0$ whenever $i> k_{A}$.

In order to prove Lemmas \ref{lemma:theta_tr_ineq} -
\ref{lemma:u_ineq}, we will need the following results.
\begin{lemma}
\label{lemma:m_mat_lem}
For all $\tau \in \mathbb{R}^{p}_{+}$ and $\lambda \in
\mathbb{R}^{p}_{+}$ \;,
\begin{itemize}
\item[(1)] $\msl = UR_{\lambda, \tau}U^{T}$ where $U\in
  \mathbb{R}^{n\times n}$ is an orthogonal matrix and
  $R_{\lambda,\tau}:= \mbox{diag}\{r_{1},r_{2},\ldots,r_{n}\}$ where
  $0 < r_{i} \leq 1$ for $i=1,2,\ldots,n$.
    \item[(2)]$0 \prec (\tau_{\max}s_{\max}^{2}+1)^{-1}I \preceq \msl \preceq I$, where $\tau_{\max}:=\max\{\tau_{1},\ldots,\tau_{p}\}$ and $s_{\max}$ is the largest singular value of the matrix $X$.
    \item[(3)] $||\msl|| \leq \sqrt{n}$.
\end{itemize}
\end{lemma}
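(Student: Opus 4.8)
The plan is to rewrite $\msl$ in a form that exposes its spectrum. Since $\tsl = \lambda_{0}(X^{T}X + D_{\tau}^{-1})$, we have $\lambda_{0}\tsli = (X^{T}X + D_{\tau}^{-1})^{-1}$, so
\[
\msl = I - X(X^{T}X + D_{\tau}^{-1})^{-1}X^{T} \;.
\]
Because $\tau \in \mathbb{R}^{p}_{+}$, the matrix $D_{\tau}$ is positive definite and invertible, so the Sherman--Morrison--Woodbury identity applies with $A = I$, $U = X$, $C = D_{\tau}$, $V = X^{T}$, yielding the clean expression
\[
\msl = (I + X D_{\tau} X^{T})^{-1} \;.
\]
All three claims follow from this representation; I would flag the identification of this reformulation as the only genuinely non-routine step, everything downstream being a short computation with symmetric matrices.

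For part (1): $X D_{\tau} X^{T}$ is symmetric and non-negative definite (indeed $v^{T}X D_{\tau}X^{T}v = \sum_{j}\tau_{j}(X^{T}v)_{j}^{2}\ge 0$), so $I + X D_{\tau} X^{T}$ is symmetric positive definite with every eigenvalue at least $1$. Taking its spectral decomposition and inverting, one obtains $\msl = U R_{\lambda,\tau} U^{T}$ with $U$ orthogonal and $R_{\lambda,\tau} = \mbox{diag}\{r_{1},\dots,r_{n}\}$, where each $r_{i}$ is the reciprocal of an eigenvalue of $I + X D_{\tau} X^{T}$ and therefore lies in $(0,1]$.

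For part (2): the bound $\msl \preceq I$ is immediate from $r_{i}\le 1$. For the lower bound I would use $D_{\tau}\preceq \tau_{\max} I$, which gives $X D_{\tau} X^{T}\preceq \tau_{\max}XX^{T}\preceq \tau_{\max}s_{\max}^{2}I$, the last step because the largest eigenvalue of $XX^{T}$ is $s_{\max}^{2}$. Hence $I + X D_{\tau} X^{T}\preceq (\tau_{\max}s_{\max}^{2}+1)I$, and since inversion reverses $\preceq$ on the positive-definite cone, $(\tau_{\max}s_{\max}^{2}+1)^{-1}I \preceq \msl$; positivity of $\msl$ was already established in part (1). For part (3), using the spectral form and symmetry of $\msl$, $\norm{\msl}^{2} = \tr(\msl^{2}) = \sum_{i=1}^{n}r_{i}^{2}\le n$, so $\norm{\msl}\le\sqrt{n}$.

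An alternative that avoids Woodbury is to note $v^{T}\msl v = \norm{v}^{2} - \max_{z}\{2v^{T}Xz - \norm{Xz}^{2} - z^{T}D_{\tau}^{-1}z\}$ and bound the maximand by completing the square in $Xz$, which directly yields $0 < v^{T}\msl v \le \norm{v}^{2}$ for all $v\ne 0$; but the Woodbury route is cleaner and also delivers the explicit diagonalization needed for part (1).
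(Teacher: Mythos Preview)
Your proof is correct. The approach differs from the paper's in one clean way: you invoke the Woodbury identity to obtain the closed form $\msl = (I + X D_{\tau} X^{T})^{-1}$ up front, and then read off all three parts from the spectrum of $I + X D_{\tau} X^{T}$. The paper instead sets $B = X D_{\tau}^{1/2}$, writes $\msl = I - B(B^{T}B + I)^{-1}B^{T}$, and uses the SVD of $B$ to extract the eigenvalues $r_{i} = 1/(\gamma_{B,i}^{2}+1)$; for part~(2) it works on the other side, bounding $D_{\tau}^{-1} \succeq \tau_{\max}^{-1} I$ and diagonalizing via the SVD of $X$. The two routes are equivalent in content (note $X D_{\tau} X^{T} = BB^{T}$, so your eigenvalues and the paper's $r_{i}$ coincide), but your Woodbury reformulation is more economical: it makes the monotonicity argument for part~(2) a one-liner via inversion on the positive-definite cone, whereas the paper carries the SVD of $X$ through explicitly. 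The paper's version has the minor advantage of displaying the $r_{i}$ in terms of the singular values of $XD_{\tau}^{1/2}$, which is occasionally useful downstream, but nothing in the later lemmas actually requires that explicit form.
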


\begin{proof}[Proof of Lemma~\ref{lemma:m_mat_lem}]
  This proof is similar to the proof of Lemmas 4 and 5 of
  \cite{roma:hobe:2015}. Recall,
\begin{align*}
\msl &= I - \lambda_{0}X\tsli X^{T} \\
&= I -\lambda_{0}X[\lambda_{0}(X^{T}X + D_{\tau}^{-1})]^{-1}X^{T}\\
&= I - X(X^{T}X+D_{\tau}^{-1})^{-1}X^{T}\\
&= I - XD_{\tau}^{1/2}(D_{\tau}^{1/2}X^{T}XD_{\tau}^{1/2} +I_{p})^{-1}D_{\tau}^{1/2}X^{T}.
\end{align*}
Let $B:= XD_{\tau}^{1/2}$ and let $U_{B}\Gamma_{B}V_{B}^{T}$ be the singular value decomposition of $B$.
Then
\begin{align*}
\msl &= I - U_{B}\Gamma_{B} V_{B}^{T}(V_{B}\Gamma_{B}^{T}U_{B}^{T}U_{B}\Gamma_{B} V_{B}^{T} + I_{p})^{-1}V_{B}\Gamma_{B}^{T}U_{B}^{T}\\
&= I - U_{B}\Gamma_{B}(\Gamma_{B}^{T}\Gamma_{B} + I_{p})^{-1}\Gamma_{B}^{T}U_{B}^{T} \\
& = U_{B}(I-\Gamma_{B}(\Gamma_{B}^{T}\Gamma_{B} + I_{p})^{-1}\Gamma_{B}^{T})U_{B}^{T}\\
&= U_{B}R_{\lambda, \tau}U_{B}^{T}
\;,
\end{align*}
where $R_{\lambda, \tau} := \mbox{diag}\{r_{1},r_{2},\ldots,r_{n}\}$, with
\begin{equation*}
r_{i} = 1-\frac{\gamma^{2}_{B\,i}}{\gamma_{B\,i}^{2}+1} = \frac{1}{\gamma_{B\,i}^{2}+1}\;,
\end{equation*}
where $\gamma_{B\,i}^{2} = 0$ for $i > k_{B}$, and $0<r_{i}\leq 1$ for $i=1,\ldots,n$. This proves (1).

Next, let $\tau_{\max}:=\max\{\tau_{1},\tau_{2},\ldots,\tau_{p}\}$, and notice that
\begin{align*}
X(X^{T}X + D_{\tau}^{-1})^{-1}X^{T}&\preceq X(X^{T}X + \tau_{\max}^{-1}I_{p})^{-1}X^{T}\\
&= U_{X}\Gamma_{X}V_{X}^{T}(V_{X}\Gamma_{X}^{T}U_{X}^{T}U_{X}\Gamma_{X}V_{X}^{T}+\tau_{\max}^{-1}I_{p})^{-1}V_{X}\Gamma_{X}^{T}U_{X}^{T}\\
&= U_{X}\Gamma_{X}(\Gamma_{X}^{T}\Gamma_{X}+\tau_{\max}^{-1}I_{p})^{-1}\Gamma_{X}^{T}U_{X}^{T} \;.
\end{align*}
Thus,
\begin{align*}
\msl &= I - \lambda_{0}X\tsli X^{T}\\
&= I - X(X^{T}X + D_{\tau}^{-1})^{-1}X^{T}\\
&\succeq I - U_{X}\Gamma_{X}(\Gamma_{X}^{T}\Gamma_{X}+\tau_{\max}^{-1}I_{p})^{-1}\Gamma_{X}^{T}U_{X}^{T}\\
&= U_{X}[I-\Gamma_{X}(\Gamma_{X}^{T}\Gamma_{X}+\tau_{\max}^{-1}I_{p})^{-1}\Gamma_{X}^{T}]U_{X}^{T} \;.
\end{align*}
Notice that
$[I-\Gamma_{X}(\Gamma_{X}^{T}\Gamma_{X}+\tau_{\max}^{-1}I_{p})^{-1}\Gamma_{X}^{T}]$
is a diagonal matrix whose entries are given by
$$1-\frac{\gamma_{X\, i}^{2}}{\gamma_{X\, i}^{2}+\tau_{\max}^{-1}} = \frac{\tau_{\max}^{-1}}{\gamma_{X\, i}^{2}+\tau_{\max}^{-1}} = \frac{1}{\gamma_{X\, i}^{2}\,\tau_{\max}+1}\geq \frac{1}{s_{\max}^{2}\tau_{\max}+1} \;,$$
for all $i=1,\ldots,n$ where $s_{\max}$ is the largest singular value of $X$.

Thus,
\begin{equation*}
0 \prec \left(\frac{1}{\tau_{\max}s_{\max}^{2} + 1}\right)I = U_{X}\left(\frac{1}{\tau_{\max}s_{\max}^{2} + 1}\right)U_{X}^{T} \preceq \msl = U_{B}R_{\lambda,\tau}\,U_{B}^{T} \preceq U_{B}U_{B}^{T} = I,
\end{equation*}
which proves (2).

Lastly,
$$\norm{\msl}^{2} = \tr(\msl^{2}) = \tr(U_{B}R_{\lambda,\tau}U_{B}^{T}U_{B}R_{\lambda,\tau}U_{B}^{T}) = \tr(U_{B}R^{2}_{\lambda,\tau}U_{B}^{T}) \leq \tr(U_{B}U_{B}^{T}) = n \;,$$
and therefore $||\msl|| \leq \sqrt{n}$.
\end{proof}

\begin{lemma}
\label{lemma:ZQ_lemma}
For all $\lambda \in \mathbb{R}^{m+1}_{+}$ and all $\tau \in \mathbb{R}^{p}_{+}$,
\begin{itemize}
    \item[(1)] $\lambda_{0}\msl^{1/2}Z\qsli Z^{T}\msl^{1/2} \preceq I_{n}$ .
    \item[(2)] $\norm{\lambda_{0}\msl^{1/2}Z\qsli Z^{T}\msl^{1/2}}^{2} \leq n$ .
\end{itemize}

\end{lemma}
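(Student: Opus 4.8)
The plan is to express the matrix of interest in the form $N\qsli N^{T}$ for a suitable $N$ and then exploit the fact that deleting the (positive definite) term $\Lambda$ from $\qsl$ can only enlarge the associated quadratic form. Since $\msl$ is positive definite by Lemma~\ref{lemma:m_mat_lem}, it has a positive definite square root $\msl^{1/2}$; set $N := \lambda_{0}^{1/2}\msl^{1/2}Z$, an $n\times q$ matrix. Then $N^{T}N = \lambda_{0}Z^{T}\msl Z$, so $\qsl = N^{T}N + \Lambda$. Because $\Lambda = \oplus_{i=1}^{m}\lambda_{i}I_{q_{i}}$ is positive definite, $\qsl$ is positive definite (hence invertible, with no rank assumption on $Z$ needed) and $\qsl \succeq N^{T}N$. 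The matrix in the statement is exactly $N\qsli N^{T} = \lambda_{0}\msl^{1/2}Z\qsli Z^{T}\msl^{1/2}$, which is symmetric and non-negative definite.

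For part (1), let $K := \qsl^{-1/2}N^{T}$. Then $KK^{T} = \qsl^{-1/2}(N^{T}N)\qsl^{-1/2} \preceq \qsl^{-1/2}\qsl\,\qsl^{-1/2} = I_{q}$, using $N^{T}N \preceq \qsl$ and congruence invariance of $\preceq$. Since $K^{T}K$ and $KK^{T}$ share the same nonzero eigenvalues, every eigenvalue of $K^{T}K = N\qsli N^{T}$ lies in $[0,1]$, i.e.\ $\lambda_{0}\msl^{1/2}Z\qsli Z^{T}\msl^{1/2} = K^{T}K \preceq I_{n}$. (Equivalently, one can use the push-through/Woodbury identity $N\qsli N^{T} = I_{n} - (I_{n} + N\Lambda^{-1}N^{T})^{-1}$, which is $\preceq I_{n}$ because $(I_{n}+N\Lambda^{-1}N^{T})^{-1}$ is non-negative definite; I would use whichever form meshes best with the matrix facts already collected in Appendix~\ref{app:prelim}.)

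For part (2), write $A := \lambda_{0}\msl^{1/2}Z\qsli Z^{T}\msl^{1/2}$. By part (1), $A$ is symmetric with eigenvalues $\mu_{1},\ldots,\mu_{n} \in [0,1]$. Since the Frobenius norm satisfies $\norm{A}^{2} = \tr(A^{T}A) = \tr(A^{2}) = \sum_{i=1}^{n}\mu_{i}^{2}$ and each $\mu_{i}^{2}\le 1$, we conclude $\norm{A}^{2} \le n$.

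There is no serious obstacle here: the only point requiring care is the reduction in part (1), namely recognizing that positive definiteness of $\Lambda$ yields $\qsl \succeq N^{T}N$ and then transferring the eigenvalue bound from $KK^{T}$ to $K^{T}K$; everything else is routine bookkeeping. I would also note at the outset that $\qsl^{-1/2}$ and $\msl^{1/2}$ are well defined thanks to the positive definiteness established in Lemma~\ref{lemma:m_mat_lem} and the presence of $\Lambda$, so the argument goes through for all $\tau\in\mathbb{R}^{p}_{+}$ and $\lambda\in\mathbb{R}^{m+1}_{+}$ as claimed.
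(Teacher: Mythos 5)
Your proof is correct, and it reaches the conclusion by a different technical route than the paper. The paper first rewrites $\lambda_{0}\msl^{1/2}Z\qsli Z^{T}\msl^{1/2}$ by factoring $\Lambda^{-1/2}$ out of $\qsl$, so that the matrix becomes $A(A^{T}A+\lambda_{0}^{-1}I)^{-1}A^{T}$ with $A=\msl^{1/2}Z\Lambda^{-1/2}$; it then takes an explicit singular value decomposition of $A$ and reads off the eigenvalues $\gamma_{A\,i}^{2}/(\gamma_{A\,i}^{2}+\lambda_{0}^{-1})\le 1$, which gives both (1) and, by summing squares, (2). You instead set $N=\lambda_{0}^{1/2}\msl^{1/2}Z$, observe that $\qsl=N^{T}N+\Lambda\succeq N^{T}N$ because $\Lambda\succ 0$, and transfer the bound $KK^{T}\preceq I_{q}$ for $K=\qsl^{-1/2}N^{T}$ to $K^{T}K=N\qsli N^{T}$ via the standard fact that $KK^{T}$ and $K^{T}K$ share nonzero eigenvalues; part (2) then follows identically from $\tr(A^{2})=\sum_i\mu_{i}^{2}\le n$. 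The underlying idea is the same in both cases (the quadratic form is dominated because $\qsl$ contains $N^{T}N$ plus something positive definite), but your execution avoids the SVD and the $\Lambda^{-1/2}$ bookkeeping entirely, is marginally more general (it uses only $\Lambda\succ 0$, not its block-diagonal structure), and correctly notes that no rank condition on $Z$ is needed; the paper's version buys an explicit formula for the eigenvalues, which it does not actually use beyond the bound $\le 1$. Your parenthetical push-through identity $N\qsli N^{T}=I_{n}-(I_{n}+N\Lambda^{-1}N^{T})^{-1}$ is also a valid one-line alternative. All the prerequisites you invoke (positive definiteness of $\msl$, hence existence of $\msl^{1/2}$ and $\msl^{-1/2}$) are indeed supplied by part (2) of Lemma~\ref{lemma:m_mat_lem}.
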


\begin{proof}[Proof of Lemma~\ref{lemma:ZQ_lemma}]
Notice that
\begin{equation}
\label{eq:ZQ_eq1}
\begin{aligned}
\lambda_{0}\msl^{1/2}Z\qsli Z^{T}\msl^{1/2} &= \lambda_{0}\msl^{1/2}Z(\lambda_{0}Z^{T}\msl Z +\Lambda)^{-1} Z^{T}\msl^{1/2}\\
&=\msl^{1/2}Z(Z^{T}\msl Z +\lambda_{0}^{-1}\Lambda)^{-1} Z^{T}\msl^{1/2}\\
&=\msl^{1/2}Z\Lambda^{-1/2}(\Lambda^{-1/2}Z^{T}\msl^{1/2}\msl^{1/2}Z\Lambda^{-1/2} +\lambda_{0}^{-1}I)^{-1}\Lambda^{-1/2} Z^{T}\msl^{1/2} \;,
\end{aligned}
\end{equation}
Let $A:=\msl^{1/2}Z\Lambda^{-1/2}$ and let $U_{A}\Gamma_{A}V_{A}^{T}$ represent the singular value decomposition of $A$. Then, from (\ref{eq:ZQ_eq1}),
\begin{equation}
\label{eq:ZQ_eq2}
\begin{aligned}
\lambda_{0}\msl^{1/2}Z\qsli Z^{T}\msl^{1/2} &= U_{A}\Gamma_{A}V_{A}^{T}(V_{A}\Gamma_{A}^{T}U_{A}^{T}U_{A}\Gamma_{A}V_{A}^{T}+\lambda_{0}^{-1}I)^{-1}V_{A}\Gamma_{A}^{T}U_{A}^{T}\\
&= U_{A}\Gamma_{A}(\Gamma_{A}^{T}\Gamma_{A}+\lambda_{0}^{-1}I)^{-1}\Gamma_{A}^{T}U_{A}^{T}\;,
\end{aligned}
\end{equation}
where $\Gamma_{A}(\Gamma_{A}^{T}\Gamma_{A}+\lambda_{0}^{-1}I)^{-1}\Gamma_{A}^{T}$ is a diagonal matrix whose elements are given by
\begin{equation}
\label{eq:ZQ_eq3}
\frac{\gamma_{A\,i}^{2}}{\gamma_{A\,i}^{2}+\lambda_{0}^{-1}} \leq 1 \ \ \text{for} \ i=1,\ldots,n \;.
\end{equation}
Thus,
\begin{equation}
\label{eq:ZQ_eq4}
\lambda_{0}\msl^{1/2}Z\qsli Z^{T}\msl^{1/2} = U_{A}\Gamma_{A}(\Gamma_{A}^{T}\Gamma_{A}+\lambda_{0}^{-1}I)^{-1}\Gamma_{A}^{T}U_{A}^{T} \preceq U_{A}U_{A}^{T} = I_{n} \;,
\end{equation}
which proves (1).

Next, from (\ref{eq:ZQ_eq2}) and (\ref{eq:ZQ_eq3})
\begin{equation}
\label{eq:y_norm_ineq6}
\begin{aligned}
\norm{\lambda_{0}\msl^{1/2}Z\qsli Z^{T}\msl^{1/2}}^{2} &= \tr[(\lambda_{0}\msl^{1/2}Z\qsli Z^{T}\msl^{1/2})^{T}(\lambda_{0}\msl^{1/2}Z\qsli Z^{T}\msl^{1/2})] \\
&= \tr[(U_{A}\Gamma_{A}(\Gamma_{A}^{T}\Gamma_{A}+\lambda_{0}^{-1}I)^{-1}\Gamma_{A}^{T}U_{A}^{T})^{T}U_{A}\Gamma_{A}(\Gamma_{A}^{T}\Gamma_{A}+\lambda_{0}^{-1}I)^{-1}\Gamma_{A}^{T}U_{A}^{T}] \\
&= \tr[U_{A}\Gamma_{A}(\Gamma_{A}^{T}\Gamma_{A}+\lambda_{0}^{-1}I)^{-1}\Gamma_{A}^{T}U_{A}^{T}U_{A}\Gamma_{A}(\Gamma_{A}^{T}\Gamma_{A}+\lambda_{0}^{-1}I)^{-1}\Gamma_{A}^{T}U_{A}^{T}] \\
&= \tr[U_{A}\Gamma_{A}(\Gamma_{A}^{T}\Gamma_{A}+\lambda_{0}^{-1}I)^{-1}\Gamma_{A}^{T}\Gamma_{A}(\Gamma_{A}^{T}\Gamma_{A}+\lambda_{0}^{-1}I)^{-1}\Gamma_{A}^{T}U_{A}^{T}]\\
&=\tr[\Gamma_{A}(\Gamma_{A}^{T}\Gamma_{A}+\lambda_{0}^{-1}I)^{-1}\Gamma_{A}^{T}\Gamma_{A}(\Gamma_{A}^{T}\Gamma_{A}+\lambda_{0}^{-1}I)^{-1}\Gamma_{A}^{T}U_{A}^{T}U_{A}]\\
&=\tr[\Gamma_{A}(\Gamma_{A}^{T}\Gamma_{A}+\lambda_{0}^{-1}I)^{-1}\Gamma_{A}^{T}\Gamma_{A}(\Gamma_{A}^{T}\Gamma_{A}+\lambda_{0}^{-1}I)^{-1}\Gamma_{A}^{T}]\\
&= \sum_{i=1}^{n}\left(\frac{\gamma_{A\,i}^{2}}{\gamma_{A\,i}^{2}+\lambda_{0}^{-1}}\right)^{2}\\
&\leq n\;,
\end{aligned}
\end{equation}
which proves (2).
\end{proof}

\begin{lemma}
\label{lemma:exp_beta_lem}
For all $\lambda\in \mathbb{R}^{m+1}_{+}$ and $\tau \in \mathbb{R}^{p}_{+}$
\begin{itemize}
    \item[(1)] $\norm{\lambda_{0}\tsli X^{T}}^{2} < \infty$ .
    \item[(2)] $\norm{I-\lambda_{0}Z\qsli Z^{T}\msl}^{2} \leq n^{2}\left(s_{\max}^{2}\sum_{j=1}^{p}\tau_{j}+1\right)$ .
\end{itemize}
\end{lemma}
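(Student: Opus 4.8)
I would prove the two parts separately; part~(1) is essentially a one-line observation, and part~(2) carries the substance.

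\textbf{Part (1).} Since $\tsl = \lambda_{0}(X^{T}X+D_{\tau}^{-1})$, the scalar $\lambda_{0}$ cancels and $\lambda_{0}\tsli X^{T} = (X^{T}X+D_{\tau}^{-1})^{-1}X^{T}$, an expression that does not involve $\lambda$ at all. For every $\tau\in\mathbb{R}^{p}_{+}$ the matrix $D_{\tau}^{-1}$ is positive definite, so $X^{T}X+D_{\tau}^{-1}$ is positive definite and hence invertible; therefore $(X^{T}X+D_{\tau}^{-1})^{-1}X^{T}$ is a fixed $p\times n$ matrix with finite entries, and its Frobenius norm is finite. (If an explicit bound were wanted, the singular value decomposition $B := XD_{\tau}^{1/2} = U_{B}\Gamma_{B}V_{B}^{T}$ gives $\lambda_{0}\tsli X^{T} = D_{\tau}^{1/2}V_{B}(\Gamma_{B}^{T}\Gamma_{B}+I_{p})^{-1}\Gamma_{B}^{T}U_{B}^{T}$, and using $V_{B}^{T}D_{\tau}V_{B}\preceq\tau_{\max}I_{p}$ one gets $\norm{\lambda_{0}\tsli X^{T}}^{2}\le n\tau_{\max}/4$; but only finiteness is needed here.)

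\textbf{Part (2).} The plan is to conjugate the (non-symmetric) matrix into the symmetric one that Lemma~\ref{lemma:ZQ_lemma} already controls. By Lemma~\ref{lemma:m_mat_lem}(2), $\msl$ is symmetric positive definite, so $\msl^{1/2}$ and $\msl^{-1/2}$ are well-defined and symmetric, and a direct computation gives the identity
\[
I - \lambda_{0}Z\qsli Z^{T}\msl \;=\; \msl^{-1/2}\big(I_{n} - \lambda_{0}\msl^{1/2}Z\qsli Z^{T}\msl^{1/2}\big)\msl^{1/2} \;=\; \msl^{-1/2}(I_{n}-P)\,\msl^{1/2},
\]
where $P := \lambda_{0}\msl^{1/2}Z\qsli Z^{T}\msl^{1/2}$. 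By Lemma~\ref{lemma:ZQ_lemma}(1), $0\preceq P\preceq I_{n}$, hence $0\preceq I_{n}-P\preceq I_{n}$. I would then use the standard fact that the Frobenius norm of a product is bounded by the spectral norm of the outer factors times the Frobenius norm of the middle factor, so that, writing $\sigma(\cdot)$ for the spectral (largest singular value) norm,
\[
\norm{I - \lambda_{0}Z\qsli Z^{T}\msl} \;\le\; \sigma(\msl^{-1/2})\,\norm{I_{n}-P}\,\sigma(\msl^{1/2}).
\]
The three factors are controlled by what is already in hand: $\sigma(\msl^{1/2})\le 1$ since $\msl\preceq I_{n}$; $\norm{I_{n}-P}^{2} = \tr((I_{n}-P)^{2})\le n$ since $0\preceq I_{n}-P\preceq I_{n}$; and $\sigma(\msl^{-1/2})^{2} = \sigma(\msl^{-1})\le\tau_{\max}s_{\max}^{2}+1$ since $\msl\succeq(\tau_{\max}s_{\max}^{2}+1)^{-1}I_{n}$ --- the last two both from Lemma~\ref{lemma:m_mat_lem}(2). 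This yields $\norm{I - \lambda_{0}Z\qsli Z^{T}\msl}^{2}\le n(\tau_{\max}s_{\max}^{2}+1)$, and since $\tau_{\max}\le\sum_{j=1}^{p}\tau_{j}$ and $n\le n^{2}$, this is at most $n^{2}\big(s_{\max}^{2}\sum_{j=1}^{p}\tau_{j}+1\big)$, the claimed bound.

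\textbf{Where the difficulty lies.} The heart of the argument is the conjugation identity together with the genuine invertibility of $\msl^{1/2}$, and both hinge on the two-sided estimate $0\prec(\tau_{\max}s_{\max}^{2}+1)^{-1}I_{n}\preceq\msl\preceq I_{n}$ of Lemma~\ref{lemma:m_mat_lem}(2), which conveniently delivers invertibility of $\msl$ and the bounds on $\sigma(\msl^{1/2})$ and $\sigma(\msl^{-1/2})$ simultaneously; after that, everything is routine. One point I would be careful about is to keep the sharper intermediate bound $n(\tau_{\max}s_{\max}^{2}+1)$ and coarsen only at the very end: the lazier estimate via $\norm{I - \lambda_{0}Z\qsli Z^{T}\msl}\le\norm{I_{n}}+\norm{\lambda_{0}Z\qsli Z^{T}\msl}$, combined with Lemma~\ref{lemma:ZQ_lemma}(2) on the second term, introduces an extra factor of $4$ that the $n^{2}$ on the right-hand side fails to absorb when $n\in\{2,3\}$.
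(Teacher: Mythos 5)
Your part (2) is correct and takes a genuinely different, and in fact slightly sharper, route than the paper. The paper expands $\norm{I-\lambda_{0}Z\qsli Z^{T}\msl}^{2}$ as a trace, discards the nonpositive cross term, and then bounds the remaining quartic term $\lambda_{0}^{2}\tr(\msl Z\qsli Z^{T}Z\qsli Z^{T}\msl)$ using Lemma~\ref{lemma:m_mat_lem}(2) and Lemma~\ref{lemma:ZQ_lemma}(2), arriving at $n+n^{2}(s_{\max}^{2}\tau_{\max}+1)$ before coarsening. Your conjugation identity $I-\lambda_{0}Z\qsli Z^{T}\msl=\msl^{-1/2}(I-P)\msl^{1/2}$ with $0\preceq P\preceq I_{n}$ rests on exactly the same two ingredients (the two-sided bound on $\msl$ and the contraction property of $P$), but the spectral-norm/Frobenius-norm splitting delivers the cleaner intermediate bound $n(\tau_{\max}s_{\max}^{2}+1)$, and your final coarsening to $n^{2}\left(s_{\max}^{2}\sum_{j=1}^{p}\tau_{j}+1\right)$ is valid. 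Your closing caution about the triangle-inequality shortcut is well taken.

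Part (1), however, has a genuine gap relative to what the lemma is actually for. The displayed claim $\norm{\lambda_{0}\tsli X^{T}}^{2}<\infty$ is indeed trivially true pointwise, and that is all you prove; but the way the lemma is invoked in Lemma~\ref{lemma:beta_ineq} --- ``there exists some finite $c^{*}$ such that $\norm{\lambda_{0}\tsli X^{T}}^{2}\leq c^{*}$'' --- requires a single constant valid \emph{uniformly} over all $\tau\in\mathbb{R}^{p}_{+}$. Your explicit bound $n\tau_{\max}/4$ is not uniform in $\tau$, and uniformity is not cosmetic here: if $c^{*}$ were allowed to grow like $\tau_{\max}$, then the term $\alpha_{2}c^{*}\tr(ZZ^{T})\sum_{i=1}^{m}\lambda_{i}^{-1}$ in \eqref{eq:term1_eq2} would, after taking $\Ex[\,\cdot\,|\theta,\lambda]$, produce a cross term of order $\lambda_{0}\norm{\beta}^{2}\sum_{i}\lambda_{i}^{-1}$ that the drift function cannot absorb. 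The paper's proof supplies exactly this missing uniformity: it writes $\norm{\lambda_{0}\tsli X^{T}}^{2}=\sum_{i=1}^{n}\norm{(X^{T}X+D_{\tau}^{-1})^{-1}X^{T}e_{i}}^{2}$, recasts each summand as a quadratic form in an inverse of a weighted sum of rank-one matrices plus a multiple of the identity, and appeals to \cite{khare2011spectral} to conclude that the supremum over all positive weight vectors is finite. Some argument of this kind (a $\tau$-free bound) is the real content of part (1), and it is absent from your proposal.
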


\begin{proof}[Proof of Lemma~\ref{lemma:exp_beta_lem}]

\begin{equation}
\begin{aligned}
\norm{\lambda_{0}\tsli X^{T}}^{2} &= \norm{(X^{T}X+D_{\tau}^{-1})^{-1}X^{T}}^{2}\\
&= \tr(X(X^{T}X+D_{\tau}^{-1})^{-2}X^{T})\\
&= \sum_{i=1}^{n}e_{i}^{T}X(X^{T}X+D_{\tau}^{-1})^{-2}X^{T}e_{i}\\
&= \sum_{i=1}^{n}\norm{(X^{T}X+D_{\tau}^{-1})^{-1}X^{T}e_{i}}^{2}
\;,
\end{aligned}
\end{equation}
where $e_{i}\in\mathbb{R}^{n}$, $i=1,\ldots,n$, are the standard unit vectors.  Let $x_{i}$ represent the $i$th column vector of $X^{T}$.

For a given $i$,
\begin{equation}
\begin{aligned}
\norm{(X^{T}X+D_{\tau}^{-1})^{-1}X^{T}e_{i}}^{2}&= \norm{(X^{T}X+D_{\tau}^{-1})^{-1}x_{i}}^{2}\\
&=\norm{(\sum_{j=1}^{n}x_{j}x_{j}^{T}+D_{\tau}^{-1})^{-1}x_{i}}^{2}\\
&=\norm{(x_{i}x_{i}^{T}+\sum_{i\neq j}^{n}x_{j}x_{j}^{T}+D_{\tau}^{-1})^{-1}x_{i}}^{2}\\
&= \norm{(x_{i}x_{i}^{T}+\sum_{i\neq j}^{n}x_{j}x_{j}^{T}+(D_{\tau}^{-1}-\tau_{\bullet}^{-1}I_{p}) + \tau_{\bullet}^{-1}I_{p})^{-1}x_{i}}^{2}\;,\\
\end{aligned}
\end{equation}
where $\tau_{\bullet}^{-1} = (\tau_{1}+\ldots+\tau_{p})^{-1}$. Define the vectors $t_{1},t_{2},\ldots,t_{n+p} \in \mathbb{R}^{p}$ such that $t_{j} = x_{j}$ for $j = 1,\ldots,n$, and $t_{n+k}=e_{k}$, $k=1,\ldots,p$, where $e_{k}$ are the standard unit vectors in $\mathbb{R}^{p}$.  Next, define the positive constants $w_{1},w_{2},\ldots,w_{n+p}\in\mathbb{R}_{+}$ as follows
\begin{equation*}
w_{l} = \left\{\begin{array}{ll}
\tau_{\bullet}^{-1} & l=i \;,\\
1 & l\neq i, 1\leq l\leq p \;,\\
\tau_{1}^{-1}-\tau_{\bullet}^{-1} & l=n+1\;, \\
\vdots & \vdots\\
\tau_{p}^{-1}-\tau_{\bullet}^{-1} & l=n+p \;.
\end{array}\right.
\end{equation*}
Thus,
\begin{equation}
\begin{aligned}
\norm{(X^{T}X+D_{\tau}^{-1})^{-1}X^{T}e_{i}}^{2}&=\norm{(t_{i}t_{i}^{t} + \sum_{l\in\{1,2,\ldots,n\}\diagdown \{i\}}w_{l}t_{l}t_{l}^{T} + \sum_{l=n+1}^{n+p}w_{l}t_{l}t_{l}^{T} + w_{i}I_{p})^{-1}t_{i}}^{2}\\
&= t_{i}^{T}\left(t_{i}t_{i}^{t} + \sum_{l\in\{1,2,\ldots,n\}\diagdown \{i\}}w_{l}t_{l}t_{l}^{T} + \sum_{l=n+1}^{n+p}w_{l}t_{l}t_{l}^{T} + w_{i}I_{p}\right)^{-2}t_{i}\\
&\leq \sup_{w\in\mathbb{R}^{n+p}} t_{i}^{T}\left(t_{i}t_{i}^{t} + \sum_{l\in\{1,2,\ldots,n\}\diagdown \{i\}}w_{l}t_{l}t_{l}^{T} + \sum_{l=n+1}^{n+p}w_{l}t_{l}t_{l}^{T} + w_{i}I_{p}\right)^{-2}t_{i} \\
&:= C_{i}^{*}< \infty\;, \quad i=1 ,\ldots,n \;,
\end{aligned}
\end{equation}
where the last inequality follows from \cite{khare2011spectral}. Thus
\begin{equation}
\norm{\lambda_{0}\tsli X^{T}}^{2}  \leq \sum_{i=1}^{n}C_{i}^{*} < \infty\;,
\end{equation}
which proves (1).

Next,
\begin{equation}
\begin{aligned}
\norm{I-\lambda_{0}Z\qsli Z^{T}\msl}^{2} &= \tr[(I-\lambda_{0}Z\qsli Z^{T}\msl)^{T}(I-\lambda_{0}Z\qsli Z^{T}\msl)] \\
&= \tr[I - \lambda_{0}Z\qsli Z^{T}\msl - \lambda_{0}\msl Z\qsli Z^{T} + \lambda_{0}^{2}\msl Z\qsli Z^{T}Z\qsli Z^{T}\msl]\\
&= \tr(I) - 2\lambda_{0}\tr(Z\qsli Z^{T}\msl) + \lambda_{0}^{2}\tr(\msl Z\qsli Z^{T}Z\qsli Z^{T}\msl)\\
&= n - 2\lambda_{0}\tr(\msl^{1/2}Z\qsli Z^{T}\msl^{1/2}) + \lambda_{0}^{2}\tr(\msl Z\qsli Z^{T}Z\qsli Z^{T}\msl)\\
&\leq n + \lambda_{0}^{2}\tr(\msl Z\qsli Z^{T}Z\qsli Z^{T}\msl)\;.
\end{aligned}
\end{equation}
From (2) of Lemma~\ref{lemma:m_mat_lem},
\begin{equation}
\begin{aligned}
\msl Z\qsli Z^{T}Z\qsli Z^{T}\msl &= \msl Z\qsli Z^{T}\msl^{1/2}\msl^{-1}\msl^{1/2}Z\qsli Z^{T}\msl\\
&= \msl^{1/2}(\msl^{1/2} Z\qsli Z^{T}\msl^{1/2})\msl^{-1}(\msl^{1/2}Z\qsli Z^{T}\msl^{1/2})\msl^{1/2}\\
&\preceq (\tau_{\max}s_{\max}^{2}+1)\msl^{1/2}(\msl^{1/2} Z\qsli Z^{T}\msl^{1/2})^{2}\msl^{1/2}\;.
\end{aligned}
\end{equation}
From (2) of Lemma~\ref{lemma:ZQ_lemma} and (2) of Lemma~\ref{lemma:m_mat_lem},
\begin{equation}
\begin{aligned}
\lambda_{0}^{2}\tr(\msl Z\qsli Z^{T}Z\qsli Z^{T}\msl) &\leq (\tau_{\max}s_{\max}^{2}+1)\tr[\lambda_{0}^{2}\msl^{1/2}(\msl^{1/2} Z\qsli Z^{T}\msl^{1/2})^{2}\msl^{1/2}] \\
&= (\tau_{\max}s_{\max}^{2}+1)\norm{\lambda_{0}(\msl^{1/2} Z\qsli Z^{T}\msl^{1/2})\msl^{1/2}}^{2}\\
&\leq (\tau_{\max}s_{\max}^{2}+1)\norm{\lambda_{0}\msl^{1/2} Z\qsli Z^{T}\msl^{1/2}}^{2}\norm{\msl^{1/2}}^{2}\\
&\leq (\tau_{\max}s_{\max}^{2}+1)n\,\norm{\msl^{1/2}}^{2}\\
&= (\tau_{\max}s_{\max}^{2}+1)n\,\tr(\msl)\\
&\leq (\tau_{\max}s_{\max}^{2}+1)n\,\tr(I)\\
&=n^{2}(s_{\max}^{2}\tau_{\max}+1)\\
&\leq n^{2}\left(s_{\max}^{2}\sum_{j=1}^{p}\tau_{j}+1\right)
\;,
\end{aligned}
\end{equation}
which proves (2).
\end{proof}

\subsection{Proof of Lemma~\ref{lemma:theta_tr_ineq}}

\noindent {\bf Lemma ~\ref{lemma:theta_tr_ineq}.\;} \textit{For all
  $\tau \in \mathbb{R}^{p}_{+}$ and $\lambda \in \mathbb{R}^{m+1}_{+}
  \;,$}
\begin{itemize}
\item[(1)] $\tr(W\Var[\tilde{\theta}|\tau,\lambda]W^{T}) \leq
  \tr(X\tsli X^{T}) + \tr(Z\qsli Z^{T})$ ,

\item[(1)] $\tr(X\tsli X^{T}) \leq \rank(X)\lambda_{0}^{-1}$ ,
  \hspace*{1mm} \mbox{and}

\item[(2)] $\tr(Z\qsli Z^{T}) \leq
  \tr(ZZ^{T})\sum_{i=1}^{m}\lambda_{i}^{-1}$ .
\end{itemize}

\begin{proof}[Proof of Lemma~\ref{lemma:theta_tr_ineq}]

First,
\begin{align*}
 W\Var[\tilde{\theta}|\tau,\lambda]W^{T} &= [X \ Z]\left[\begin{array}{cc}
\tsl^{-1} + \lambda_{0}^{2}\tsl^{-1}X^{T}Z\qsl^{-1}Z^{T}X\tsl^{-1} & -\lambda_{0}\tsl^{-1}X^{T}Z\qsl^{-1}\\
-\lambda_{0}\qsl^{-1}Z^{T}X\tsl^{-1} & \qsl^{-1}
\end{array}\right]\left[\begin{array}{c}
X^{T}\\
Z^{T}
\end{array}\right]\\
&= X\tsli X^{T} +\lambda_{0}^{2}X\tsli X^{T}Z\qsli Z^{T}X\tsli X^{T} -\lambda_{0}X\tsli X^{T}Z\qsli Z^{T} \\
&\quad \quad - \lambda_{0}Z\qsli Z^{T}X\tsli X^{T} + Z\qsli Z^{T}.
\end{align*}
Notice that $I-\msl = \lambda_{0}X\tsli X^{T}$, and therefore
\begin{align*}
W\Var[\tilde{\theta}|\tau,\lambda]W^{T} &=  X\tsli X^{T} + (I-\msl)Z\qsli Z^{T}(I-\msl) - (I-\msl)Z\qsli Z^{T}\\
& \quad \quad  -Z\qsli Z^{T}(I-\msl) + Z\qsli Z^{T}\\
&=X\tsli X^{T} + (I-\msl)Z\qsli Z^{T}(I-\msl -I) \\
& \quad \quad-Z\qsli Z^{T}(I-\msl) + Z\qsli Z^{T}\\
&=X\tsli X^{T} - (I-\msl)Z\qsli Z^{T}(I+\msl) +  (I-\msl)Z\qsli Z^{T}\\
& \quad \quad-Z\qsli Z^{T}(I-\msl) + Z\qsli Z^{T}.
\end{align*}
Thus,
\begin{align*}
\tr(W\Var[\tilde{\theta}|\tau,\lambda]W^{T}) &= \tr(X\tsli X^{T}) - \tr((I-\msl)Z\qsli Z^{T}(I+\msl)) \\
& \quad \quad + \tr((I-\msl)Z\qsli Z^{T}) - \tr(Z\qsli Z^{T}(I-\msl)) + \tr(Z\qsli Z^{T}) \\
&= \tr(X\tsli X^{T}) - \tr((I-\msl)Z\qsli Z^{T}(I+\msl)) + \tr(Z\qsli Z^{T})\\
&= \tr(X\tsli X^{T}) - \tr(\qsl^{-1/2} Z^{T}(I-\msl^{2})Z\qsl^{-1/2}) + \tr(Z\qsli Z^{T}) \;.\\
\end{align*}
Applying (1) of Lemma \ref{lemma:m_mat_lem} and using the fact that $R_{\lambda,\tau}^{2} \preceq I$, we get
\begin{equation*}
\qsl^{-1/2} Z^{T}(I-\msl^{2})Z\qsl^{-1/2}= \qsl^{-1/2} Z^{T}U(I-R_{\lambda, \tau}^{2})U^{T}Z\qsl^{-1/2} \succeq 0 \;.
\end{equation*}
Hence $\tr(\qsl^{-1/2} Z^{T}(I-\msl^{2})Z\qsl^{-1/2}) \geq 0$, and therefore
\begin{equation}
\label{eq:theta_var_ineq}
\tr(W\Var[\tilde{\theta}|\tau,\lambda]W^{T}) \leq \tr(X\tsli X^{T})  + \tr(Z\qsli Z^{T}) \;,
\end{equation}
which proves (1),

Next, notice that
\begin{equation*}
X\tsli X^{T} = X[\lambda_{0}(X^{T}X + D_{\tau}^{-1})]^{-1}X^{T} = \lambda_{0}^{-1}X[X^{T}X+D_{\tau}^{-1}]^{-1}X^{T} \preceq \lambda_{0}^{-1}X[X^{T}X+\tau_{\max}^{-1}I_{p}]^{-1}X^{T}.
\end{equation*}
Then,
\begin{align*}
\tr(X\tsli X^{T}) &\leq \lambda_{0}^{-1}\tr[U_{X}\Gamma_{X}V^{T}_{X}(V_{X}\Gamma_{X}^{T}U^{T}_{X}U_{X}\Gamma_{X}V^{T}_{X}+\tau_{\max}^{-1}I_{p})^{-1}V_{X}\Gamma_{X}^{T}U^{T}_{X}]\\
& = \lambda_{0}^{-1}\tr[U_{X}\Gamma_{X}(\Gamma_{X}^{T}\Gamma_{X}+\tau_{\max}^{-1}I_{p})^{-1}\Gamma_{X}^{T}U_{X}^{T}]\\
& = \lambda_{0}^{-1}\tr[\Gamma_{X}(\Gamma_{X}^{T}\Gamma_{X}+\tau_{\max}^{-1}I_{p})^{-1}\Gamma_{X}^{T}U_{X}^{T}U_{X}]\\
& = \lambda_{0}^{-1}\tr[\Gamma_{X}(\Gamma_{X}^{T}\Gamma_{X}+\tau_{\max}^{-1}I_{p})^{-1}\Gamma_{X}^{T}]\\
&= \lambda_{0}^{-1}\sum_{i=1}^{k_{X}}\frac{\gamma_{i}^{2}}{\gamma_{i}^{2}+\tau_{\max}^{-1}} \\
&\leq \lambda_{0}^{-1}k_{X} = \lambda_{0}^{-1}\mbox{rank}(X) \;,
\end{align*}
which proves (2).

Finally,
\begin{equation*}
Z\qsli Z^{T} = Z(\lambda_{0}Z^{T}\msl Z+\Lambda)^{-1}Z^{T} \preceq Z\Lambda^{-1}Z^{T} \preceq \lambda_{\min}^{-1}ZZ^{T},
\end{equation*}
where $\lambda_{\min}:=\min\{\lambda_{1},\lambda_{2},\ldots,\lambda_{r}\}$. Thus,
\begin{equation*}
\tr(Z\qsli Z^{T}) \leq \tr(\lambda_{\min}^{-1}ZZ^{T}) \leq \lambda_{\min}^{-1}\tr(ZZ^{T}) \leq\tr(ZZ^{T})\sum_{i=1}^{m}\lambda_{i}^{-1},
\end{equation*}
which proves (3).
\end{proof}

\subsection{Proof of Lemma~\ref{lemma:ynorm_ineq}}

\noindent
{\bf Lemma~\ref{lemma:ynorm_ineq}.\;}\textit{For all $\tau \in \mathbb{R}^{p}_{+}$ and $\lambda \in \mathbb{R}^{m+1}_{+}$}
$$\norm{y-W\Ex[\theta|\tau,\lambda]}^{2} \leq  2n\norm{y}^{2}+ 2n^{3}\norm{y}^{2} \;.$$
\begin{proof}[Proof of Lemma~\ref{lemma:ynorm_ineq}]
First,
\begin{align*}
W\Ex[\theta|\tau,\lambda] &= [X \ Z]\left[\begin{array}{c}
\lambda_{0}\tsli X^{T}y - \lambda_{0}^{2}\tsli X^{T}Z\qsli Z^{T}\msl y\\
\lambda_{0}\qsli Z^{T}\msl y
\end{array}\right],\\
&= \lambda_{0}X\tsli X^{T}y-\lambda_{0}^{2}X\tsli X^{T}Z\qsli Z^{T}\msl y + \lambda_{0}Z\qsli Z^{T}\msl y \;.
\end{align*}
Thus,
\begin{equation}
\label{eq:y_norm_ineq1}
\begin{aligned}
\norm{y-W\Ex[\theta|\tau,\lambda]}^{2} &= \norm{y - \lambda_{0}X\tsli X^{T}y+\lambda_{0}^{2}X\tsli X^{T}Z\qsli Z^{T}\msl y - \lambda_{0}Z\qsli Z^{T}\msl y}^{2}\\
&=\norm{(I-\lambda_{0}X\tsli X^{T})y -(I-\lambda_{0}X\tsli X^{T})\lambda_{0}Z\qsli Z^{T}\msl y}^{2}\\
& = \norm{\msl y - \lambda_{0}\msl Z\qsli Z^{T}\msl y}^{2}\\
&\leq 2\norm{\msl y}^{2} + \norm{\lambda_{0}\msl Z\qsli Z^{T}\msl y}^{2} \\
&\leq 2\norm{\msl}^{2}\norm{y}^{2} + 2\norm{\lambda_{0}\msl Z\qsli Z^{T}\msl y}^{2}\\
&\leq 2n\norm{y}^{2} + 2\norm{\lambda_{0}\msl Z\qsli Z^{T}\msl y}^{2} \;,
\end{aligned}
\end{equation}
where the last inequality follows from (3) of Lemma \ref{lemma:m_mat_lem}.

Indeed,
\begin{equation}
\label{eq:y_norm_ineq2}
\begin{aligned}
\norm{\lambda_{0}\msl Z\qsli Z^{T}\msl y}^{2} & = \norm{\lambda_{0}\msl^{1/2}\msl^{1/2} Z\qsli Z^{T}\msl^{1/2}\msl^{1/2} y}^{2},\\
&\leq \norm{\msl^{1/2}}^{2}\norm{\lambda_{0}\msl^{1/2}Z\qsli Z^{T}\msl^{1/2}}^{2}\norm{\msl^{1/2}y}^{2}\\
&\leq \norm{\msl^{1/2}}^{2}\norm{\lambda_{0}\msl^{1/2}Z\qsli Z^{T}\msl^{1/2}}^{2}\norm{\msl^{1/2}}^{2}\norm{y}^{2}\\
&\leq n^{3}\norm{y}^{2} \;,
\end{aligned}
\end{equation}
where the last inequality follows from (2) of Lemma~\ref{lemma:ZQ_lemma}, and (2) of Lemma \ref{lemma:m_mat_lem} since
\begin{align*}
\norm{\msl^{1/2}}^{2} = \tr(\msl) \leq \tr(I_{n}) =  n \;.
\end{align*}
\end{proof}
Thus,
$$\norm{y-W\Ex[\theta|\tau,\lambda]}^{2} \leq  2n\norm{y}^{2}+ 2n^{3}\norm{y}^{2}.$$

\subsection{Proof of Lemma~\ref{lemma:beta_ineq}}

\noindent {\bf Lemma~\ref{lemma:beta_ineq}.\;}\textit{For all $\tau
  \in \mathbb{R}^{p}_{+}$ and $\lambda \in \mathbb{R}^{m+1}_{+}$,}
\begin{itemize}
\item[(1)] $\tr(\Var[\beta|\tau,\lambda]) \leq \lambda_{0}^{-1}\sum_{j=1}^{p}\tau_{j} +c^{*}\,\tr(ZZ^{T})\sum_{i=1}^{m}\lambda_{i}^{-1}$ , \hspace*{1mm} \mbox{and}

\item[(2)] $\norm{\Ex[\beta|\tau,\lambda]}^{2} \leq
  c^{*}\norm{y}^{2}n^{2}\left(s_{\max}^{2}\sum_{j=1}^{p}\tau_{j}+1\right)$ ,
\end{itemize}
\textit{where $s_{\max}$ is the largest singular value of $X$ and
  $c^{*}$ is a finite positive constant.}

\begin{proof}[Proof of Lemma~\ref{lemma:beta_ineq}]
From (\ref{eq:theta_cond_var})
\begin{equation}
\label{eq:beta_lem_eq1}
\tr(\Var[\beta|\tau,\lambda]) =  \tr(\tsli) +\tr(\lambda_{0}^{2}\tsli X^{T}Z\qsli Z^{T}X\tsli) \;.
\end{equation}
Notice that that
\begin{equation*}
\tsli = [\lambda_{0}(X^{T}X +D_{\tau}^{-1})]^{-1} \preceq \lambda_{0}^{-1}\left(D_{\tau}^{-1}\right)^{-1} = \lambda_{0}^{-1}D_{\tau} \;,
\end{equation*}
and thus
\begin{equation}
\label{eq:beta_lem_ineq2}
\tr(\tsli) \leq \lambda_{0}^{-1}\,\tr\left(D_{\tau}\right) = \lambda_{0}^{-1}\sum_{j=1}^{p}\tau_{j} \;.
\end{equation}
Next,
\begin{equation}
\label{eq:beta_lem_ineq3}
\begin{aligned}
\tr(\lambda_{0}^{2}\tsli X^{T}Z\qsli Z^{T}X\tsli) &= \norm{\qsl^{-1/2}Z^{T}(\lambda_{0}X\tsli)}^{2}\\
&\leq \norm{\qsl^{-1/2}Z^{T}}^{2}\norm{\lambda_{0}X\tsli}^{2}\\
&= \tr(Z\qsli Z^{T})\tr(\lambda_{0}^{2}\tsli X^{T}X\tsli)\\
&= \tr\left(Z(\lambda_{0}Z^{T}\msl Z^{T}+\Lambda)^{-1}Z^{T}\right)\tr(\lambda_{0}^{2}X\tsl^{-2}X^{T})\\
&\leq \lambda_{\min}^{-1}\tr(ZZ^{T})\norm{\lambda_{0}\tsli X^{T}}^{2}\\
&\leq c^{*}\,\tr(ZZ^{T})\sum_{i=1}^{m}\lambda_{i}^{-1}
\;,
\end{aligned}
\end{equation}
where the last inequality follows from (1) of Lemma~\ref{lemma:exp_beta_lem} since there exists some finite $c^{*}$ such that $\norm{\lambda_{0}\tsli X^{T}}^{2}\leq c^{*}$.
Thus, from (\ref{eq:beta_lem_ineq2}) and (\ref{eq:beta_lem_ineq3})
\begin{equation*}
\tr(\Var[\theta|\tau,\lambda]) \leq \lambda_{0}^{-1}\sum_{j=1}^{p}\tau_{j} +c^{*}\,\tr(ZZ^{T})\sum_{i=1}^{m}\lambda_{i}^{-1} \;,
\end{equation*}
which proves (1).

To prove (2), it follows from (\ref{eq:theta_cond_mean})
\begin{equation}
\label{eq:beta_lem_eq4}
\begin{aligned}
\norm{\Ex[\beta|\tau,\lambda]}^{2} &= \norm{\lambda_{0}\tsli X^{T}y - \lambda_{0}^{2}\tsli X^{T}Z\qsli Z^{T}\msl y}^{2}\\
&= \norm{\lambda_{0}\tsli X^{T}(I - \lambda_{0}Z\qsli Z^{T}\msl)y}^{2}\\
&\leq \norm{\lambda_{0}\tsli X^{T}}^{2}\norm{I - \lambda_{0}Z\qsli Z^{T}\msl}^{2}\norm{y}^{2}
\;,
\end{aligned}
\end{equation}
and from (1) and (2) of Lemma~\ref{lemma:exp_beta_lem},
\begin{equation}
\label{eq:beta_lem_eq5}
\norm{\Ex[\beta|\tau,\lambda]}^{2} \leq c^{*}\norm{y}^{2}n^{2}\left(s_{\max}^{2}\sum_{j=1}^{p}\tau_{j}+1\right) \;.
\end{equation}
\end{proof}

\subsection{Proof of Lemma~\ref{lemma:frac_beta_lem}}

\noindent
{\bf Lemma~\ref{lemma:frac_beta_lem}.\;}\textit{For all $\tau\in\mathbb{R}_{+}^{p}$ and $\lambda\in\mathbb{R}_{+}^{m+1}$,}
$$\Ex\left[\sum_{j=1}^{p}\frac{1}{\abs{\beta_{j}}^{\nu(c)}}\Big|\tau,\lambda\right] \leq p\kappa(c)s_{\max}^{\nu(c)}\,\lambda_{0}^{\nu(c)/2} + \kappa(c)\lambda_{0}^{\nu(c)/2}\sum_{j=1}^{p}\frac{1}{\tau_{j}^{\nu(c)/2}} \;,$$
\textit{where}
$$\kappa(c):= \frac{\Gamma\left(\frac{1-\nu(c)}{2}\right)2^{\frac{1-\nu(c)}{2}}}{\sqrt{2\pi}} \;,$$
\textit{and $s_{\max}$ is the largest sinular value of the matrix $X$.}

\begin{proof}[Proof of Lemma~\ref{lemma:frac_beta_lem}]
Recall that given $\tau$ and $\lambda$, $\beta \sim N(\mu,\Sigma)$ where $\mu$ and $\Sigma$ are given by (\ref{eq:theta_cond_mean}) and (\ref{eq:theta_cond_var}), respectively. Thus, $\beta_{j}\sim N(\mu_{j},\sigma_{j}^{2})$ where $\mu_{j} = e_{j}^{T}\mu$ and $\sigma_{j}^{2} = e_{j}^{T}\Sigma e_{j}$ for $j=1,\ldots,p$. As in Lemma~\ref{lemma:exp_beta_lem}, $e_{1},\ldots,e_{p}$ represent the standard unit vectors for $\mathbb{R}^{p}$. From Proposition A1 of \cite{pal2014geometric}, it follows that
\begin{equation}
\label{eq:frac_beta_exp}
\Ex\left[\frac{1}{\abs{\beta_{j}}^{\nu(c)}}\Big|\tau,\lambda\right] \leq \frac{\kappa(c)}{\sigma_{j}^{\nu(c)}}, \ \text{for } j=1,\ldots,p \;,
\end{equation}
where
$$\kappa(c):= \frac{\Gamma\left(\frac{1-\nu(c)}{2}\right)2^{\frac{1-\nu(c)}{2}}}{\sqrt{2\pi}} \;.$$
From (\ref{eq:theta_cond_var}),
\begin{align*}
\frac{1}{\sigma_{j}^{\nu(c)}} &= \left(\frac{1}{e_{j}^{T}[\tsl^{-1} + \lambda_{0}^{2}\tsl^{-1}X^{T}Z\qsl^{-1}Z^{T}X\tsl^{-1}]e_{j}}\right)^{\nu(c)/2}\\
&\leq \left(\frac{1}{e_{j}^{T}\tsl^{-1}e_{j}}\right)^{\nu(c)/2}\\
&= \left(\frac{1}{e_{j}^{T}[\lambda_{0}(X^{T}X + D_{\tau}^{-1})]^{-1}e_{j}}\right)^{\nu(c)/2}\\
&= \left(\frac{1}{e_{j}^{T}[\lambda_{0}(V_{X}\Gamma_{X}^{T}U_{X}^{T}U_{X}\Gamma_{X}V^{T}_{X} + D_{\tau}^{-1})]^{-1}e_{j}}\right)^{\nu(c)/2}\\
&= \left(\frac{1}{e_{j}^{T}[\lambda_{0}(V_{X}\Gamma_{X}^{T}\Gamma_{X}V^{T}_{X} + D_{\tau}^{-1})]^{-1}e_{j}}\right)^{\nu(c)/2}\\
&\leq \left(\frac{1}{e_{j}^{T}[\lambda_{0}(s^{2}_{\max}I + D_{\tau}^{-1})]^{-1}e_{j}}\right)^{\nu(c)/2}\\
&= \lambda_{0}^{\nu(c)/2}\left(s_{\max}^{2} + \frac{1}{\tau_{j}}\right)^{\nu(c)/2}\\
&\leq (s_{\max}^{2})^{\nu(c)/2}\lambda_{0}^{\nu(c)/2} + \lambda_{0}^{\nu(c)/2}\frac{1}{\tau_{j}^{\nu(c)/2}}
\;,
\end{align*}
where $s_{\max}$ is the largest singular value of $X$, and the last inequality follows from the fact that $(x+y)^{\delta}\leq x^{\delta}+y^{\delta}$ for $\delta \in (0,1)$. Thus, from (\ref{eq:frac_beta_exp})
\begin{equation}
\label{eq:new_drift_eq3}
\Ex\left[\sum_{j=1}^{p}\frac{1}{\abs{\tilde{\beta}_{j}}^{c}}\bigg|\tau,\lambda\right] \leq\sum_{j=1}^{p}\frac{\kappa(c)}{\sigma_{j}^{\nu(c)}} \leq p\kappa(c)s_{\max}^{\nu(c)}\,\lambda_{0}^{\nu(c)/2} + \kappa(c)\lambda_{0}^{\nu(c)/2}\sum_{j=1}^{p}\frac{1}{\tau_{j}^{\nu(c)/2}} \;.
\end{equation}

\end{proof}

\subsection{Proof of Lemma~\ref{lemma:u_ineq}}

\noindent {\bf Lemma~\ref{lemma:u_ineq}.\;}\textit{Suppose that $Z$
  has full column rank. For all $\tau \in \mathbb{R}^{p}_{+}$ and
  $\lambda \in \mathbb{R}^{m+1}_{+}$,}
\begin{itemize}
\item[(1)] $\tr(R_{i}\qsli R_{i}^{T}) \leq q_{i}\lambda_{i}^{-1}$ ,
  \hspace*{1mm} \mbox{and}

\item[(2)] $||\Ex[u_{i}|\tau,\lambda]||^{2} \leq
  q_{i}\tr[(Z^{T}Z)^{-1}]n^{3}\norm{y}^{2}\left(s_{\max}^{2}\sum_{j=1}^{p}\tau_{j}+1\right)$
  ,
\end{itemize}
\textit{for $i=1,\ldots,m$ .}
\begin{proof}[Proof of Lemma~\ref{lemma:u_ineq}]
Note that
\begin{equation}
\label{eq:u_lem_eq0}
\qsli = (\lambda_{0}Z^{T}\msl Z +\Lambda)^{-1} \preceq \Lambda^{-1} \;,
\end{equation}
thus
\begin{equation}
\label{eq:u_lem_eq1}
\tr(R_{i}\qsli R_{i}^{T}) \leq \tr(R_{i}\Lambda^{-1}R_{i}^{T}) = q_{i}\lambda_{i}^{-1}, \ \  i=1,\ldots,m \;,
\end{equation}
which proves the first result.

Next, from (\ref{eq:theta_cond_mean}),
\begin{equation}
\label{eq:u_lem_eq2}
\begin{aligned}
\norm{\Ex[u_{i}|\tau,\lambda]}^{2} &= \norm{\Ex[R_{i}u|\tau,\lambda]}^{2}\\
&\leq \norm{R_{i}}^{2}\norm{\Ex[u|\tau,\lambda]}^{2}\\
&=\tr(I_{q_{i}})\norm{\lambda_{0}\qsli Z^{T}\msl y}^{2}\\
&=q_{i}\norm{\lambda_{0}\qsli Z^{T}\msl y}^{2}\\
&=q_{i}\norm{\lambda_{0}(Z^{T}Z)^{-1}Z^{T}Z\qsli Z^{T}\msl y}^{2}\\
&\leq q_{i}\norm{(Z^{T}Z)^{-1}Z^{T}}^{2}\norm{\lambda_{0}Z\qsli Z^{T}\msl y}^{2}\\
&= q_{i}\norm{(Z^{T}Z)^{-1}Z^{T}}^{2}\norm{\lambda_{0}\msl^{-1/2}\msl^{1/2}Z\qsli Z^{T}\msl y}^{2}\\
&\leq q_{i}\norm{(Z^{T}Z)^{-1}Z^{T}}^{2}\norm{\msl^{-1/2}}^{2}\norm{\lambda_{0}\msl^{1/2}Z\qsli Z^{T}\msl^{1/2}}^{2}\norm{\msl^{1/2}}^{2}\norm{y}^{2}\\
&= q_{i}\tr((Z^{T}Z)^{-1}Z^{T}Z(Z^{T}Z)^{-1})\tr(\msl^{-1})\norm{\lambda_{0}\msl^{1/2}Z\qsli Z^{T}\msl^{1/2}}^{2}\tr(\msl)\norm{y}^{2}\\
&= q_{i}\tr((Z^{T}Z)^{-1})\tr(\msl^{-1})\norm{\lambda_{0}\msl^{1/2}Z\qsli Z^{T}\msl^{1/2}}^{2}\tr(\msl)\norm{y}^{2} \;.
\end{aligned}
\end{equation}
Thus, from (2) of  Lemma~\ref{lemma:m_mat_lem} and (2) of Lemma~\ref{lemma:ZQ_lemma},
\begin{equation}
\label{eq:u_lem_eq3}
\begin{aligned}
\norm{\Ex[u_{i}|\tau,\lambda]}^{2} &\leq q_{i}\tr[(Z^{T}Z)^{-1}](\tau_{\max}s_{\max}^{2}+1)\tr(I_{n})n\tr(I_{n})\norm{y}^{2}\\
&= q_{i}\tr[(Z^{T}Z)^{-1}]n^{3}\norm{y}^{2}(\tau_{\max}s_{\max}^{2}+1)\\
&=q_{i}\tr[(Z^{T}Z)^{-1}]n^{3}\norm{y}^{2}\left(s_{\max}^{2}\sum_{j=1}^{p}\tau_{j}+1\right),
\end{aligned}
\end{equation}
which proves the second result.
\end{proof}

\subsection{Proof of Lemma~\ref{lemma:tau_exp_lem}} {\bf
  Lemma~\ref{lemma:tau_exp_lem}.\;}\textit{For all
  $(\theta,\lambda)\in\mathsf{X}$ ,}
\begin{itemize}
\item[1.] $\Ex[\tau_{j}|\theta,\lambda] \leq \frac{4c+1}{4d} +
  \frac{\lambda_{0}\beta_{j}^{2}}{2} \;, \hspace*{1mm} \mbox{and}$
    \item[2.] $\Ex[\tau_{j}|\theta,\lambda] \leq \frac{c}{d}+\frac{\beta_{j}^{2}}{2C}+\frac{\lambda_{0}C}{4d}$ .
\end{itemize}
\textit{for every $C > 0$ .}
\begin{proof}[Proof of Lemma~\ref{lemma:tau_exp_lem}]
From (\ref{eq:tau_exp_ineq2}),
\begin{equation}
\Ex[\tau_{j}|\theta,\lambda] = \sqrt{\frac{\lambda_{0}\beta_{j}^{2}}{2d}}\,\frac{K_{c+\frac{1}{2}}\left(\sqrt{2d\lambda_{0}\beta_{j}^{2}}\right)}{K_{c-\frac{1}{2}}\left(\sqrt{2d\lambda_{0}\beta_{j}^{2}}\right)} \;.
\end{equation}
Theorem 2 of \cite{segura2011bounds} states that
\begin{equation*}
\frac{K_{\nu-\frac{1}{2}}(x)}{K_{\nu+\frac{1}{2}}(x)} > \frac{x}{\sqrt{x^{2}+\nu^{2}}+\nu}\;,
\end{equation*}
for $\nu>0$ and $x>0$.

Thus,
\begin{equation}
\begin{aligned}
\Ex[\tau_{j}|\theta,\lambda] &< \sqrt{\frac{\lambda_{0}\beta_{j}^{2}}{2d}}\times\frac{\sqrt{c^{2} + 2d\lambda_{0}\beta_{j}^{2}}+c}{\sqrt{2d\lambda_{0}\beta_{j}^{2}}} \\
&= \frac{\sqrt{c^{2} + 2d\lambda_{0}\beta_{j}^{2}}+c}{2d} \\
&\leq \frac{c}{2d} + \frac{c}{2d} + \sqrt{\frac{\lambda_{0}\beta_{j}^{2}}{2d}}\\
&= \frac{c}{d} + \sqrt{\frac{\lambda_{0}\beta_{j}^{2}}{2d}} \;,
\end{aligned}
\end{equation}
where we make use of the fact that $\sqrt{x^{2}+y^{2}} \leq \abs{x} + \abs{y}$.

To get the first inequality, we use the fact that $\abs{xy}\leq(x^{2}+y^{2})/2$. Thus
\begin{equation}
\begin{aligned}
\Ex[\tau_{j}|\theta,\lambda] &\leq \frac{c}{d} + \frac{\lambda_{0}\beta_{j}^{2}}{2} + \frac{1}{4d} \\
&= \frac{4c + 1}{4d} + \frac{\lambda_{0}\beta_{j}^{2}}{2} \;.
\end{aligned}
\end{equation}
Similarly, for any constant $C > 0$,
\begin{equation}
\begin{aligned}
\Ex[\tau_{j}|\theta,\lambda] &\leq \frac{c}{d} + \sqrt{\frac{\lambda_{0}\beta_{j}^{2}}{2d}} \\
&= \frac{c}{d} + \sqrt{\frac{\lambda_{0}\beta_{j}^{2}C}{2dC}} \\
&\leq \frac{c}{d} + \frac{\beta_{j}^{2}}{2C} + \frac{\lambda_{0}C}{4d} \;.
\end{aligned}
\end{equation}
\end{proof}

\subsection{Proof of Lemma~\ref{lemma:tau_exp_lem2}}

{\bf Lemma~\ref{lemma:tau_exp_lem2}.\;}\textit{For all
  $(\theta,\lambda)\in\mathsf{X}$,}
$$\Ex[\tau_{j}^{-1}|\theta,\lambda]  \leq d +\frac{3}{2\lambda_{0}\beta_{j}^{2}} \;.$$

\begin{proof}[Proof of Lemma~\ref{lemma:tau_exp_lem2}]
From (\ref{eq:tau_exp_ineq3}),
\begin{equation*}
\Ex[\tau_{j}^{-1}|\theta,\lambda] = \sqrt{\frac{2d}{\lambda_{0}\beta_{j}^{2}}}\,\frac{K_{c-\frac{3}{2}}\left(\sqrt{2d\lambda_{0}\beta_{j}^{2}}\right)}{K_{c-\frac{1}{2}}\left(\sqrt{2d\lambda_{0}\beta_{j}^{2}}\right)} \;.
\end{equation*}
From Lemma 2.2 of \cite{ismail1978monotonicity}, for each $x>0$, $s>0$ and $s_{1}\in\mathbb{R}$, the function $K_{s_{1}+s}(x)/K_{s_{1}}(x)$ is increasing in $s_{1}$. Thus, for $s_{1} < s_{2}$, $s >0$ and $x > 0$
\begin{equation*}
\frac{K_{s_{1}+s}(x)}{K_{s_{1}}(x)} \leq \frac{K_{s_{2}+s}(x)}{K_{s_{2}}(x)} \;.
\end{equation*}
Thus, taking $s_{1} = -\frac{3}{2}$, $s_{2}=-\frac{1}{2}$ and $s=c$, we have
\begin{equation*}
\frac{K_{c-\frac{3}{2}}(x)}{K_{-\frac{3}{2}}(x)} \leq \frac{K_{c-\frac{1}{2}}(x)}{K_{-\frac{1}{2}}(x)} \;.
\end{equation*}
Since $K_{s}(x) > 0$ for $s \in \mathbb{R}$ and $x > 0$ (\cite{abramowitz1966handbook}, page 374), it follows that
\begin{equation*}
\frac{K_{c-\frac{3}{2}}(x)}{K_{c-\frac{1}{2}}(x)} \leq \frac{K_{-\frac{3}{2}}(x)}{K_{-\frac{1}{2}}(x)} \;.
\end{equation*}
Next, using the fact that
\begin{equation*}
K_{-\frac{1}{2}}(x) = e^{-x}\sqrt{\frac{\pi}{2x}} \;,
\end{equation*}
and
\begin{equation*}
K_{-\frac{3}{2}}(x) = e^{-x}\sqrt{\frac{\pi}{2x}}\left(1+\frac{1}{x}\right),
\end{equation*}
 hence
$$\frac{K_{-\frac{3}{2}}(x)}{K_{-\frac{1}{2}}(x)} = \left(1+\frac{1}{x}\right) \;,$$
for all $x \in \mathbb{R}$.

Thus,
\begin{align*}
\Ex[\tau_{j}^{-1}|\theta,\lambda] &\leq  \sqrt{\frac{2d}{\lambda_{0}\beta_{j}^{2}}}\,\left(1+\frac{1}{\sqrt{2d\lambda_{0}\beta_{j}^{2}}}\right)\\
&= \sqrt{\frac{2d}{\lambda_{0}\beta_{j}^{2}}} + \frac{1}{\lambda_{0}\beta_{j}^{2}}\\
&\leq d + \frac{1}{2\lambda_{0}\beta_{j}^{2}} + \frac{1}{\lambda_{0}\beta_{j}^{2}}\\
&= d +\frac{3}{2\lambda_{0}\beta_{j}^{2}}\;,
\end{align*}
where the second inequality follows from the fact that $\abs{xy} \leq (x^{2}+y^{2})/2$.
\end{proof}

\section{Proof of Lemma~\ref{lemma:frac_tau_lem}}

{\bf Lemma~\ref{lemma:frac_tau_lem}.\;} \textit{For all
  $(\theta,\lambda)\in\mathsf{X} \;,$}
$$\Ex\left[\frac{1}{\tau_{j}^{\nu(c)/2}}\Big| \theta,\lambda\right] \leq M_{1}\frac{1}{\lambda_{0}^{\nu(c)/2}\abs{\beta_{j}}^{\nu(c)}} + M_{2} \;,$$
\textit{where $M_{1}$ is a positive constant such that $M_{1}\kappa(c)
  < 1$, and $M_{2}$ is a positive finite constant.}

\begin{proof}[Proof of Lemma~\ref{lemma:frac_tau_lem}]

  This follows directly from the arguments on pp. 613-616 and p. 618
  of \cite{pal2014geometric}.
\end{proof}

\end{appendix}

\pagebreak

\bibliographystyle{ims}
\bibliography{references}

\end{document}